\definecolor{shadecolor}{gray}{0.9}
\theoremstyle{plain}  
\newtheorem{theorem}{Theorem}[section] 
\newtheorem{lemma}[theorem]{Lemma} 
\newtheorem{proposition}[theorem]{Proposition} 
\newtheorem{corollary}[theorem]{Corollary}
\theoremstyle{definition} 
\newtheorem{definition}[theorem]{Definition}
\newtheorem{example}[theorem]{Example}
\theoremstyle{remark} 
\newtheorem{remark}[theorem]{Remark}
\newcommand{\diff}{\,\mathrm{d}}
\newcommand{\E}{\mathbb{E}}
\newcommand{\R}{\mathbb{R}}
\newcommand{\one}{\mathbbm{1}}
\newcommand{\argmin}{\operatorname{arg\,min}}
\begin{document}

\title{Optimal solutions to the isotonic regression problem}
\author{Alexander I.~Jordan}
\author{Anja M\"uhlemann}
\author{Johanna F.~Ziegel}
\affil{University of Bern}
\maketitle
\begin{abstract}
	In general, the solution to a regression problem is the minimizer of a given loss criterion, and depends on the specified loss function.
	The nonparametric isotonic regression problem is special, in that optimal solutions can be found by solely specifying a functional.
	These solutions will then be minimizers under all loss functions simultaneously as long as the loss functions have the requested functional as the Bayes act.
	For the functional, the only requirement is that it can be defined via an identification function, with examples including the expectation, quantile, and expectile functionals.	\\
	Generalizing classical results, we characterize the optimal solutions to the isotonic regression problem for such functionals, and extend the results from the case of totally ordered explanatory variables to partial orders.
	For total orders, we show that any solution resulting from the pool-adjacent-violators algorithm is optimal.
	It is noteworthy, that simultaneous optimality is unattainable in the unimodal regression problem, despite its close connection.\\
	
	\noindent Keywords: {Order-restricted optimization problems, Partial order, Simultaneous optimality, pool-adjacent-violators algorithm, consistent loss functions} \\
	MSC Classifications: {62G08}
\end{abstract}

\section{Introduction}\label{sec:intro}
Suppose that we have pairs of observations $(z_1, y_1)$, $\dots, (z_n, y_n)$ where we assume that $y_i$, $i=1, \dots, n$ are real-valued.
The aim of isotonic regression is to fit an increasing function $\hat{g}\colon \{z_1, \dots, z_n\} \to \R$ to these observations.
The covariates $z_1,\dots, z_n$ can take values in any set as long as it is equipped with a partial order which we denote by $\preceq$.
Then, a function $g\colon \{z_1, \dots, z_n\} \to \R$ is \emph{increasing} if $z_i \preceq z_j$ implies that $g(z_i) \le g(z_j)$.

As it is common in regression analysis, we aim to find an estimate $\hat{g}$ that minimizes the expected loss for some loss function $L\colon \R \times \R \to [0,\infty)$.
If the function $\hat{g}$ is interpreted as an estimator of the conditional expectation of a random variable $Y$ given $Z$, then a natural choice for $L$ is the squared error loss $L(x,y) = (x-y)^2$.
For $i \le j$, let $\E_{i:j}$ denote the expectation with respect to the empirical distribution of $(z_i,y_i), \dots, (z_j,y_j)$.
Assuming that $z_1 < z_2 < \dots < z_n$, the minimizer of the quadratic loss criterion
\begin{equation} \label{eq:criterion}
\E_{1:n} (g(Z) - Y)^2
\end{equation}
over all increasing functions $g$ is given by
\begin{equation} \label{eq:solution}
\hat{g}(z_\ell) = \min_{j \ge \ell}\max_{i \le j} \E_{i:j} Y = \max_{i\le \ell}\min_{j \ge i}\E_{i:j} Y, \quad \ell =1, \dots, n,
\end{equation}
see \citet[eq.~(1.9)--(1.13)]{Barlow1972}.
The solution $\hat{g}$ can be computed efficiently using the so-called pool-adjacent-violators (PAV) algorithm.
These results were developed in the 1950s by several parties independently; see \citet{Ayer1955}, \citet{Bartholomew1959a}, \citet{Bartholomew1959b}, \citet{Brunk1955}, \citet{vanEeden1958}, \citet{Miles1959}.

It turns out that the solution given at \eqref{eq:solution} is also the unique minimizer of the Bregman loss criterion
\begin{equation}\label{eq:criterion2}
\E_{1:n} L (g(Z), Y),
\end{equation}
where the squared error loss in \eqref{eq:criterion} has been replaced by a Bregman loss function $L=L_\phi$ \citep[Theorem 1.10]{Barlow1972}.
That is, 
\begin{equation*}
L_\phi(x,y) = \phi(y) - \phi(x) - \phi'(x)(y-x),
\end{equation*}
where $\phi$ is a convex function with subgradient $\phi'$.
\citet{Savage1971} found that the Bregman class comprises all loss functions $L$ where the expectation functional minimizes the expected loss, i.e.,
\[
\E_P Y = \argmin_x \E_P L(x, Y),
\]
where $Y$ is a random variable with distribution $P$.
Due to this property, any loss function in the Bregman class is also referred to as a consistent loss function for the expectation functional \citep{Gneiting2011}.

In summary, the increasing regression function at \eqref{eq:solution} is simultaneously optimal with respect to all consistent loss functions for the expectation.
This robustness with respect to the choice of loss function means that the solution to the regression problem is determined by the choice of the expectation as the target functional.
We will see that the same holds for other functionals.
As such, in nonparametric isotonic regression we can replace the task of choosing a loss function with the task of choosing a suitable target functional.

This remarkable result is particularly beneficial in scenarios where a single relevant loss function cannot easily be identified.
For example, institutions such as central banks or weather services provide analyses and forecasts that drive individual decision making in a heterogeneous group of users.
In these circumstances, determining a unifying loss function is hardly trivial.
However, publishing results for the expectation and for various quantile levels is certainly feasible.

The simultaneous-optimality result for nonparametric isotonic regression is in stark contrast to the optimality behavior of parametric models for increasing regression functions.
Suppose that $\{g_\theta : \theta \in \Theta\}$, $\Theta \subseteq \R^d$ is a parametric model of increasing functions $g_\theta$.
Then, the optimal parameters with respect to the Bregman-loss criterion \eqref{eq:criterion2} generally vary (substantially) depending on the chosen loss function \citep{Patton2018}.
Consistency of the loss function merely ensures that the true parameter value of a correctly specified model minimizes the Bregman-loss criterion on the population level.
Interestingly, simultaneous optimality with respect to all consistent loss functions generally also breaks down if one weakens the isotonicity constraint of the regression function to a unimodality constraint; see Section \ref{sec:unimodal}.

In this paper, we generalize the result of \citet[Theorem 1.10]{Barlow1972} in several directions.
First, instead of the expectation functional, we consider general (possibly set-valued) functionals $T$ that are given by an identification function $V(x,y)$ as defined in Definition \ref{def:TviaV}.
Second, in the case of set-valued functionals, we give a complete characterization of all possible solutions for totally ordered covariates.
Third, we demonstrate that a suitably modified version of min-max or max-min solutions as in \eqref{eq:solution} continues to hold for general partial orders on the covariates.

\begin{figure}
	\centering
	\includegraphics{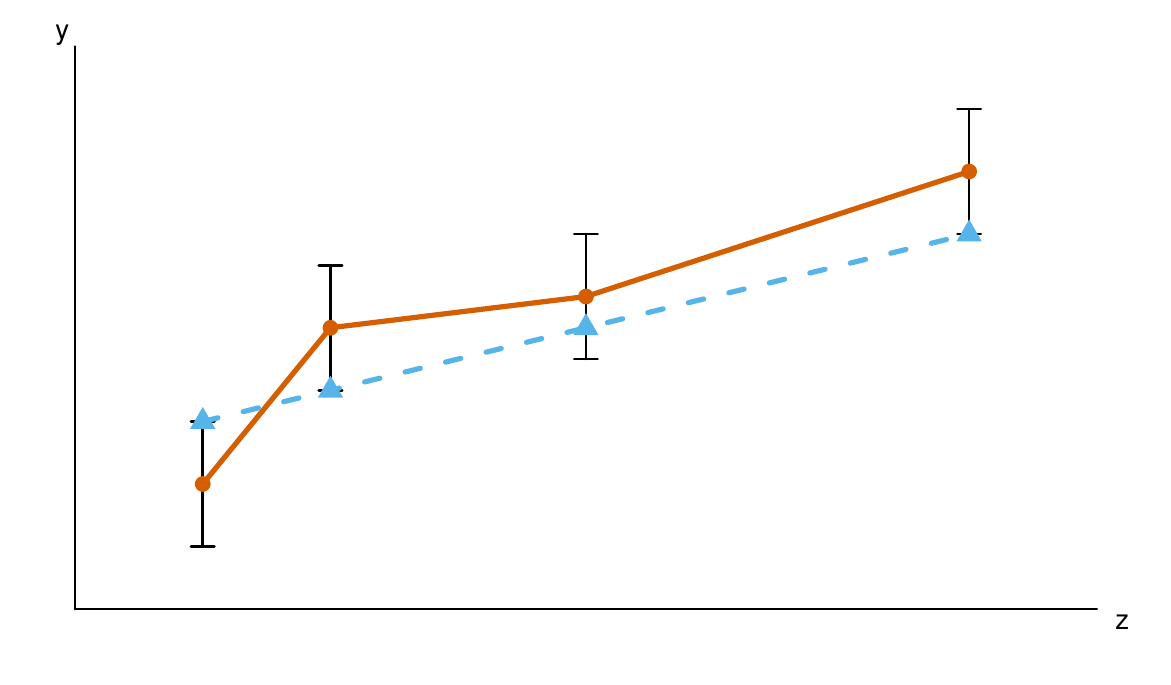}
	\caption{\textbf{Solutions in isotonic quantile regression.}
		Two solutions to the isotonic regression problem are shown for an example with $z = z_1, \dots, z_4$.
		The red curve is predetermined to pass through the midpoint of the functional intervals, whereas the blue curve illustrates the smoothest solution with minimal slope \label{fig:motivation}}
\end{figure}

An identification function is an increasing function that weighs negative values in the case of  underestimation against positive values in the case of overestimation, with an optimal expected value of zero.
The corresponding functional $T$ then maps to the optimizing argument (or set of optimizing arguments).
Prime examples of such functionals are (possibly set-valued) quantiles, expectiles \citep{Newey1987}, or ratios of expectations.
Quantiles, including the median, have previously also been treated in \citet{RobertsonWright1973, RobertsonWright1980}, but not in the interpretation as set-valued functionals.
Predefining a global scheme for reducing the median interval to a single point (e.g., some weighted average of lower and upper functional value) inevitably restricts the possible solutions to the isotonic regression problem.
Figure \ref{fig:motivation} illustrates this issue, and shows how a more general interpretation of the functional as set-valued facilitates solutions with secondary optimality criteria such as smoothness and minimal slope.
Expectiles and ratios of expectations, on the other hand, have been fully treated in \citet{RobertsonWright1980}.
These functionals map to single values and satisfy the Cauchy mean value property which is implied by identifiability.

In contrast to previous work, we treat all functionals as set-valued.
In Section \ref{sec:total}, we give explicit solutions for the lower and upper bound of the isotonic regression problem in the context of total orders.
The method of proof for these results is fundamentally different from the approach of \citet[Theorem 1.10]{Barlow1972} or \citet{RobertsonWright1980}, and in contrast to the latter comes with an immediate construction principle for loss functions.
Our method relies on the mixture or Choquet representations of consistent loss functions, introduced by \citet{Ehm2016} for the quantile and expectile functionals.
Given the identification function $V(x,y)$ for the functional $T$, a one-parameter family of elementary loss functions that are consistent for the functional $T$ can be readily defined,
\begin{align*}
S_\eta(x,y) = \left(\one\{\eta \leq x\} - \one\{\eta \leq y\}\right)V(\eta,y),
\end{align*}
where $\eta \in \R$.
For all consistent loss functions $L$ in the class
\begin{equation}\label{eq:calS}
\mathcal{S}
=\left\lbrace \int_\R S_\eta(x,y) \, \mathrm{d}H(\eta): H \text{ is a nonnegative measure on }\R \right\rbrace,
\end{equation}
the optimal isotonic solution to the criterion \eqref{eq:criterion2} is bounded below by a min-max formula and bounded above by a max-min formula as in \eqref{eq:solution} with the expectation replaced by the lower and upper functional values under $T$, respectively.
We show that the min-max or max-min solution is simultaneously optimal with respect to all elementary loss functions for $T$, and hence with respect to the entire class $\mathcal{S}$.
In fact, optimality of an isotonic solution with respect to the criterion \eqref{eq:criterion2} for $L=S_\eta$ for some $\eta \in \R$ corresponds to finding a solution with optimal superlevel set $\{g \ge \eta\}$.
Considering an isotonicity constraint as a constraint on admissible superlevel sets of the regression function relates to the work of \citet{Polonik1998} in the context of density estimation.

If $T$ is a quantile, an expectile, or a ratio of expectations, then $\mathcal{S}$ comprises all consistent loss functions for $T$ subject to standard conditions, and if $V(x, y) = x - y$ is the identification function of the expectation, then the class $\mathcal{S}$ is the class of Bregman loss functions; see \citet{Ehm2016, Gneiting2011}.
We also give results that can be directly translated to a simple algorithm that recovers the full range of optimal solutions from the lower and upper bounds and the full data set.
While the bounds alone do not contain sufficient information, only few additional computations on the entire data set are necessary.
Our method of proof also leads to a transparent proof of the validity of the PAV algorithm; see Section \ref{sec:PAV}.

Recently, \citet{MoschingDumbgen2019} derived a similar result of min-max and max-min formulas as lower and upper bounds for optimal isotonic solutions in the context of set-valued minimizers of convex and coercive loss functions.
\citet{Brummer2013} rediscover the result of \citet{Barlow1972} that the PAV algorithm leads to a simultaneously optimal solution for all proper scoring rules in the context of binary events -- a special class of loss functions that are consistent for the expectation functional.

In Section \ref{sec:partial}, we treat general partial orders on the covariates and demonstrate that a suitably modified version of min-max or max-min solutions continues to hold.
Again, the optimal isotonic fit is simultaneously optimal with respect to all loss functions in $\mathcal{S}$ defined at \eqref{eq:calS}.
With our method of proof this extension is straightforward but for reasons of transparency, we first present the case of a total order in Section \ref{sec:total}.
The results in \citet{RobertsonWright1980} not only hold for a large class of functionals, but also for partial orders on the covariates.
However, the generality of their results is limited by treating potentially set-valued functionals as maps to single values.
To the best of our knowledge, the literature following \citet{RobertsonWright1980} is void of further results that characterize the solutions to the isotonic regression problem, or any investigations into the effect of the choice of loss function among options sharing the same Bayes act.

A comprehensive overview on isotonic regression is given in the monograph \citet{Groeneboom2014}.
Also, \citet{Guntuboyina2018} review risk bounds, asymptotic theory, and algorithms in common nonparametric shape-restricted regression problems in the context of least squares optimization.
Among the most recent developments on algorithms for isotonic regression with partially ordered covariates, \citet{Kyng2015} and \citet{Stout2015} provide fast algorithms for isotone regression under different loss functions using the representation of a partial order as a directed acyclic graph.
Recent advances on asymptotic theory for isotonic regression include \citet{Han2017}, giving rates for least squares isotonic regression on the unit cube of arbitrary dimension, and \citet{Bellec2018}, considering isotonic, unimodal, and convex regression in the context of total orders.
Another recent interest is the regularization of isotonic regression on multiple variables with \citet{Luss2017} proposing a method via range restriction on the solution to the regression problem.

\section{Functionals and consistent loss functions} \label{sec:func}
We start with the definition of a functional via an identification function.
\begin{definition} \label{def:TviaV}
	A function $V\colon \R \times \R \to \R$ is called an \emph{identification function} if $V(\cdot,y)$ is increasing and left-continuous for all $y \in \R$.
	Then, for any finite and nonnegative measure $P$ on $\R$, we define the \emph{functional} $T$ induced by an identification function $V$ as
	\begin{equation*}
	T(P) = [T_P^-, T_P^+] \subseteq [-\infty, +\infty] = \bar{\R},
	\end{equation*}
	where the lower and upper bounds are given by
	\[
	T_P^- = \sup \left\{x : V(x, P) < 0\right\} \quad \text{and} \quad
	T_P^+ = \inf \left\{x : V(x, P) > 0\right\},
	\]
	using the notation $V(x, P) = \int_{-\infty}^{\infty} V(x, y) \diff P(y)$.
\end{definition}
Defining functionals for any finite and nonnegative measure, as opposed to merely probability distributions, is a minor detail that simplifies notation when joining and intersecting data subsets.
Except in the case of the null measure, any finite and nonnegative measure can be replaced with its corresponding probability distribution, without any change to the functional values.

All results are concerned with probability distributions $P$ with finite support, and therefore, the existence of integrals is guaranteed.
The following example and Proposition \ref{prop:consistent} hold for more general types of distributions given that the relevant integrals exist.
We leave these obvious generalizations up to the reader and assume that all probability distributions considered have finite support.

Note that $T_P^-$ can take the value $-\infty$ and $T_P^+$ can take the value $+\infty$.
In the subsequent results, we repeatedly refer to the smallest or largest element of a finite set where one of the elements could be $\pm \infty$.
We still write $\min$ and $\max$ of the set but this quantity could be $\pm \infty$.

\begin{definition}
	A functional $T$ is called a \emph{functional of singleton type} if $T(P)$ is a singleton whenever $P$ is not the null measure.
	Otherwise, $T$ is called a \emph{functional of interval type}.
\end{definition}

Table \ref{tab:functional} summarizes common functionals and their respective identification functions, and Example \ref{ex:functional} explains two options in more detail.

\begin{table}
	\caption{\textbf{Selection of functionals and their respective identification functions.} The parameters satisfy $\alpha, \tau\in (0,1)$, $p > 1$ and $\delta > 0$, and $u:I \to \R$ and $w:I \to (0,\infty)$ are measurable functions on an interval $I \subseteq \R$.
		The functionals ``$\ell_p$ minimizer'' and ``Huber minimizer'' map to the intervals of values minimizing the $\ell_p$ loss and the Huber loss \citep{Huber1964}, respectively} \label{tab:functional}
	\vspace{0.1cm}
	\begin{center}
		\begin{tabular}{lll} 
			\toprule
			Functional & Identification function & Type \\ 
			\midrule
			Median & $V(x,y)= \one\{x>y\}-1/2$ & interval\\
			Mean & $V(x,y) = x-y$ & singleton\\ 
			$\nth{2}$ Moment & $V(x,y)=x-y^2$ & singleton\\
			$\alpha$-Quantile & $V(x,y)= \one\{x>y\}-\alpha$& interval\\ 
			$\tau$-Expectile & $V(x,y)=2 \vert \one\{x>y\}-\tau \vert (x-y)$ & singleton\\
			Ratio $\mathbb{E}_P(u(Y))/\mathbb{E}_P(w(Y))$ & $V(x,y)=xw(y)-u(y)$ & singleton\\
			$\ell_p$ minimizer & $V(x, y) = \operatorname{sign}(x - y) \vert x - y \vert^{p - 1}$ & singleton\\
			Huber minimizer & $V(x,y)= \operatorname{sign}(x-y) \min(\vert x-y \vert, \delta)$& interval\\
			\bottomrule
		\end{tabular}
	\end{center}
\end{table}

\begin{example} \label{ex:functional}
	Let  $\alpha, \tau \in (0,1)$, and let $P$ denote a probability distribution.
	\begin{enumerate}[label=(\alph*)]
		\item
		Consider the identification function $V(x,y)=\one\{x > y\}-\alpha$, then
		$V(x, P) = P(Y < x) - \alpha$, and the interval of all $\alpha$-quantiles of $P$,
		\[
		T(P)
		= [\sup \{x : P(Y < x) < \alpha\}, \inf \{x : P(Y < x) > \alpha\}],
		\]
		is potentially of positive length.
		\item
		The identification function $V(x,y)=2|\one\{x > y\}-\tau|(x-y)$ leads to
		\begin{align*}
		V(x, P) 
		= 2 (1-\tau) \int_{-\infty}^x (x-y) \, \mathrm{d}P(y) + 2 \tau \int_{x}^\infty (x-y) \, \mathrm{d}P(y),
		\end{align*}
		which is strictly increasing and continuous in its first argument.
		Hence, there exists a unique solution in $x$ for the equation $V(x, P) = 0$, and we call that solution the $\tau$-expectile $e_\tau(P)$.
		In particular, for $\tau=\frac{1}{2}$ we obtain $V(x,y)=x-y$ and thus $T(P)=\{ \E_P(Y)\}$.
	\end{enumerate}
\end{example}

In the later proofs, we use three implications of Definition \ref{def:TviaV} repeatedly to establish order relationships between the variable in the first argument of $V$ and the functional of an empirical distribution.
To facilitate reference, we note these statements explicitly.
\begin{corollary} \label{cor:TviaV}
	Let $V$ be an identification function inducing the functional $T$, and $P$ be a finite and nonnegative measure on $\R$. Then,
	\begin{align*}
	&V(\eta, P) = 0 \implies \eta \in T(P),\\
	&V(\eta, P) > 0 \implies \eta > \sup T(P) = T_P^+,\\
	&V(\eta, P) < 0 \implies \eta \le \inf T(P) = T_P^-.
	\end{align*}
\end{corollary}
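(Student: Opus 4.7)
The plan is to observe first that $V(\cdot,P)$ inherits the structural properties of $V(\cdot,y)$ pointwise in $y$, and then to read off each of the three implications from the definitions of $T_P^-$ and $T_P^+$, using monotonicity for the easy statements and left-continuity for the one case where strictness matters.

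Concretely, I would start by noting that since $V(\cdot,y)$ is increasing for every $y\in\R$, the function $x\mapsto V(x,P)=\int V(x,y)\diff P(y)$ is increasing as well (the integral of increasing functions against a nonnegative measure is increasing), and left-continuity of $V(\cdot,P)$ follows from monotone convergence applied to $V(x_n,\cdot)\uparrow V(\eta,\cdot)$ as $x_n\uparrow\eta$; for distributions of finite support this is of course trivial, the sum of left-continuous functions being left-continuous. With this preparation the three implications reduce to order arguments.

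For the third implication, if $V(\eta,P)<0$, then $\eta$ belongs to the set $\{x:V(x,P)<0\}$, whence $\eta\le\sup\{x:V(x,P)<0\}=T_P^-$, which is the claim. For the first implication, assume $V(\eta,P)=0$. If $x$ satisfies $V(x,P)<0$, monotonicity forces $x\le\eta$ (otherwise $V(x,P)\ge V(\eta,P)=0$), so taking the supremum yields $T_P^-\le\eta$; symmetrically, $V(x,P)>0$ forces $x\ge\eta$, and in fact $x>\eta$ since monotonicity gives $V(x,P)\le 0$ for $x\le\eta$, so $T_P^+=\inf\{x:V(x,P)>0\}\ge\eta$. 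Combining, $\eta\in[T_P^-,T_P^+]=T(P)$.

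The only place where care is needed is the strict inequality in the second implication: from $V(\eta,P)>0$ alone, the definition of $T_P^+$ as an infimum only gives $T_P^+\le\eta$. This is where left-continuity of $V(\cdot,P)$ is indispensable: it ensures $\lim_{x\uparrow\eta}V(x,P)=V(\eta,P)>0$, so there exists some $x_0<\eta$ with $V(x_0,P)>0$, and therefore $T_P^+\le x_0<\eta$. This step is the only substantive one, and it is precisely the reason left-continuity was built into Definition~\ref{def:TviaV} in the first place; the rest is bookkeeping.
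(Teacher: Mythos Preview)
Your argument is correct. The paper does not actually give a proof of this corollary at all; it is stated as an immediate consequence of Definition~\ref{def:TviaV} (``we note these statements explicitly''), so your write-up simply fills in the details the paper leaves implicit, and your use of left-continuity to obtain the strict inequality $\eta>T_P^+$ in the second implication is exactly the point.
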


Lemma \ref{lemma:functional_bounds} shows that a generalized version of the Cauchy mean value property, used to define functionals in \citet{RobertsonWright1980}, holds for any functional we consider in this paper.
This suggests that our results are less general, unless it can be proven that every Cauchy mean value function can be defined in terms of an identification function.
On the other hand, in contrast to \citet{RobertsonWright1980}, we treat set-valued functionals and their boundaries rigorously, and retain a higher level of generality in that regard.
\begin{lemma} \label{lemma:functional_bounds}
	Let $P, Q$ be finite and nonnegative measures on $\R$.
	Then,
	\[
	\min\{T_P^-, T_Q^+ \} \le T_{P + Q}^- \le T_{P + Q}^+ \le \max\{ T_P^-, T_Q^+ \}.
	\]
\end{lemma}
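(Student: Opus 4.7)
The approach is to pin down the sign of $V(\eta, P+Q) = V(\eta, P) + V(\eta, Q)$ for $\eta$ outside the claimed bounds, and then invoke Corollary \ref{cor:TviaV} to translate sign information into order relations with $T_{P+Q}^-$ and $T_{P+Q}^+$. The middle inequality $T_{P+Q}^- \le T_{P+Q}^+$ is immediate from Definition \ref{def:TviaV}, so only the outer bounds require work.

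For the lower bound, I would fix any $\eta < \min\{T_P^-, T_Q^+\}$. Since $\eta < T_P^- = \sup\{x : V(x,P) < 0\}$, there exists $x' > \eta$ with $V(x', P) < 0$, and by monotonicity of $V(\cdot, P)$ we get $V(\eta, P) < 0$. Since $\eta < T_Q^+ = \inf\{x : V(x,Q) > 0\}$, the point $\eta$ cannot lie in that set, so $V(\eta, Q) \le 0$. Adding the two yields the strict inequality $V(\eta, P+Q) < 0$, and Corollary \ref{cor:TviaV} then gives $\eta \le T_{P+Q}^-$. Since this holds for every $\eta$ strictly below $\min\{T_P^-, T_Q^+\}$, taking the supremum over such $\eta$ delivers $\min\{T_P^-, T_Q^+\} \le T_{P+Q}^-$.

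The upper bound is symmetric in structure. For $\eta > \max\{T_P^-, T_Q^+\}$, the inequality $\eta > T_P^-$ forces $\eta \notin \{x : V(x,P) < 0\}$ and hence $V(\eta, P) \ge 0$, while $\eta > T_Q^+$ together with monotonicity yields $V(\eta, Q) > 0$. Summing produces $V(\eta, P+Q) > 0$, and Corollary \ref{cor:TviaV} gives $\eta > T_{P+Q}^+$. Passing to the infimum over such $\eta$ yields $T_{P+Q}^+ \le \max\{T_P^-, T_Q^+\}$.

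The only genuine subtlety is the bookkeeping of strict versus non-strict inequalities for $V(\eta, P)$ and $V(\eta, Q)$: in each of the two bounds, one term is known only to be weakly signed while the other is strictly signed, and it is precisely the strict one that allows the sum to be fed back into the strict branches of Corollary \ref{cor:TviaV}. This asymmetry also explains why the stated bound pairs $T_P^-$ with $T_Q^+$ rather than two lower or two upper functional values. Beyond this, no continuity properties of $V$ are invoked; only monotonicity in the first argument and the definitions of $T_P^-$ and $T_Q^+$ as a supremum and infimum are used.
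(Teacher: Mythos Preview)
Your argument is correct and matches the paper's own proof essentially line for line: both show that for $\eta$ strictly below $\min\{T_P^-, T_Q^+\}$ one gets $V(\eta,P)<0$ and $V(\eta,Q)\le 0$, hence $V(\eta,P+Q)<0$, with the upper bound handled symmetrically. Your write-up is simply more explicit about the monotonicity step and the appeal to Corollary~\ref{cor:TviaV}, and your closing remark on why one term must be strictly signed is a nice clarification that the paper leaves implicit.
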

\begin{proof}
	The statement follows from Definition \ref{def:TviaV}.
	The second inequality is trivial.
	For the first inequality, and $x < \min\{T_P^-, T_Q^+\}$, we have $V(x, P) < 0$ and $V(x, Q) \le 0$, hence $V(x, P + Q) < 0$.
	A similar argument applies to the third inequality.
\end{proof}

The definition of a functional in terms of an identification function comes with a straightforward construction principle for large classes of loss functions.
In a nutshell, a continuous oriented identification function defines a functional via its unique root in the first argument, a first-order condition.
By integration, corresponding loss functions inherit the consistency for the functional, i.e., the minimum expected loss is attained by any member in $T(P)$.
The loss functions defined in Proposition \ref{prop:consistent} are the most basic, in the sense that they are a result of integration with respect to the Dirac measure at a given threshold $\eta \in \R$.
A similar result has also been discussed in \citet{Dawid2016} and \citet{Ziegel2016}.

\begin{proposition} \label{prop:consistent}
	Let $V$ be an identification function, $T$ be the induced functional, and $\eta \in \R$.
	Then the elementary loss function $S_\eta \colon \bar{\R} \times \R \to \R$ given by
	\begin{align*}
	S_\eta(x,y) 
	= \left(\one\{\eta \leq x\} - \one\{\eta \leq y\}\right)V(\eta,y)
	\end{align*}
	is \emph{consistent} for $T$ relative to the class $\mathcal{P}$ of probability distributions with finite support.
	That is, 
	\begin{align*}
	\E_P S_\eta(t,Y) \leq \E_P S_\eta(x,Y)
	\end{align*}
	for all $P\in \mathcal{P}$, all $t \in T(P)$ and all $x \in \bar{\R}$.
\end{proposition}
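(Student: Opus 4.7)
The plan is to decompose $\E_P S_\eta(x, Y)$ into a summand depending on $x$ and one that does not, and then minimize the $x$-dependent part by a direct case analysis on the sign of $V(\eta, P)$. By linearity of expectation,
\begin{equation*}
\E_P S_\eta(x, Y) = \one\{\eta \le x\} V(\eta, P) - \E_P\bigl[\one\{\eta \le Y\} V(\eta, Y)\bigr],
\end{equation*}
so only the first summand is sensitive to $x$, and it takes at most two values as $x$ ranges over $\bar{\R}$: namely $0$ when $x < \eta$, and $V(\eta, P)$ when $x \ge \eta$.

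Minimizing $\one\{\eta \le x\} V(\eta, P)$ in $x$ therefore splits into three cases determined by the sign of $V(\eta, P)$. If $V(\eta, P) > 0$, the pointwise minimum is $0$, achieved by any $x$ with $x < \eta$. If $V(\eta, P) < 0$, the minimum is $V(\eta, P)$, achieved by any $x$ with $x \ge \eta$. If $V(\eta, P) = 0$, every $x \in \bar{\R}$ is a minimizer and the minimum value is $0$.

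The proposition will then follow once I verify that an arbitrary $t \in T(P)$ falls on the correct side of $\eta$ in each case, which is precisely the content of Corollary \ref{cor:TviaV}: when $V(\eta, P) > 0$ it yields $\eta > T_P^+ \ge t$, so $t < \eta$, and the $t$-evaluation matches the minimum value $0$; when $V(\eta, P) < 0$ it yields $\eta \le T_P^- \le t$, so $\eta \le t$, and the $t$-evaluation matches the minimum value $V(\eta, P)$; and the third case is trivial. There is no real obstacle — the main thing to keep honest is that $\eta \in \R$ is finite while $x$ is allowed in $\bar{\R}$, so $\one\{\eta \le x\}$ is well-defined at $x = \pm\infty$ under the standard conventions, and Corollary \ref{cor:TviaV} applies uniformly to every $t \in T(P) \subseteq \bar{\R}$.
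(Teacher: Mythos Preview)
Your proof is correct and follows essentially the same approach as the paper: both isolate the $x$-dependent part $\one\{\eta \le x\}V(\eta,P)$ (the paper does this implicitly by forming the difference $d(\eta) = (\one\{\eta \le t\} - \one\{\eta \le x\})V(\eta,P)$) and then perform the identical three-case analysis on the sign of $V(\eta,P)$, invoking Corollary~\ref{cor:TviaV} to place $t$ on the correct side of $\eta$.
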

\begin{proof}
	Let
	\begin{align*}
	d(\eta) = \E_P S_\eta(t,Y) - \E_P S_\eta(x,Y)
	= \left(\one\{\eta \leq t\} - \one\{\eta\leq x\}\right) V(\eta,P).
	\end{align*}
	If $V(\eta,P)=0$ then $d(\eta) = 0$.
	If $V(\eta,P)<0$ it follows from Corollary \ref{cor:TviaV} that $\eta \leq t$ and therefore $d(\eta)\leq 0$.
	Similary, if $V(\eta,P)>0$ it follows that $\eta > t$ and therefore $d(\eta) \leq 0$.
\end{proof}

As an immediate consequence of the consistency of elementary loss functions for the functional $T$, we have that all loss functions in the class $\mathcal{S}$ defined at \eqref{eq:calS} are also consistent for the functional $T$.
This result exemplifies an important line of reasoning used multiple times in this paper: A property of $S_\eta$ that holds for all $\eta \in \R$ translates to the class $\mathcal{S}$.
Examples of members of the class $\mathcal{S}$ for the expectation functional, i.e., $V(x, y) = x - y$, are given in Table \ref{tab:loss}.

\begin{table}
	\begin{center}
		\caption{\textbf{Commonly used loss functions that are consistent for the mean functional.} For an interval  $I\subseteq \R$, a Bregman loss is induced by a convex function $\phi:I \to \mathbb{R}$ with subgradient $\phi'$.
			See \citet{Patton2011, Patton2018} for the QLIKE loss and the exponential Bregman loss, respectively} \label{tab:loss}
		\vspace{0.1cm}
		\small
		\begin{tabular}{llll} 
			\toprule
			Name &  Mixing measure & Loss function & Domain \\
			&  $H((\eta_1,\eta_2])=$ & $L(x,y)=$ & \\
			\midrule
			Bregman loss & $\phi'(\eta_2) - \phi'(\eta_1)$ &
			$\phi(y)-\phi(x)-\phi'(x)(y-x)$ & $I$\\
			Squared error & $\eta_2-\eta_1$ &
			$(x-y)^2$ & $\R$\\
			Exponential Bregman & $\exp(\eta_2)-\exp(\eta_1)$ &
			$\exp(y)-\exp(x)-\exp(x)(y-x)$ & $\R$\\
			QLIKE loss & $-1/\eta_2 + 1/\eta_1$ &
			$y/ x-\log\left(y/ x\right)-1$ & $(0,\infty)$\\
			\bottomrule
		\end{tabular}
	\end{center}
\end{table}

The importance of the construction in Proposition \ref{prop:consistent} lies in the postponing of integration, or, in other words, applying Fubini in a double integration (with respect to $P$ and to $H$), and then showing the property of consistency for the integrand $S_\eta$ for each $\eta$ rather than for the original loss function which is the integral of $S_\eta$ with respect to $\diff H(\eta)$.

\section{Results for total orders} \label{sec:total}

\subsection{Min-max and max-min solutions} \label{sec:minmax}
Suppose that we have observations $(z_1, y_1), \dots, (z_n,y_n)$, and let $P$ denote their empirical distribution.
Throughout this section, we assume that the covariates $z_1, \dots, z_n$ are equipped with a total order, and that the indices are chosen such that $z_1 < z_2 < \dots < z_n$.
Repeated observations can also be easily accommodated as explained in Remark \ref{rem:3.1} below.

We aim to find an increasing function $g\colon \{z_1, \dots z_n\} \to \bar{\R}$ that minimizes
\begin{align}\label{eq:3}
\E_{P} S_\eta (g(Z),Y) \quad \text{for all } \eta \in \R,
\end{align}
where the random vector $(Z,Y)$ has distribution $P$.
Any increasing function $\hat{g}$ solving this optimization problem is a solution to the isotonic regression problem that is optimal with respect to all scoring functions in the class $\mathcal{S}$, simultaneously.

Condition \eqref{eq:3} is equivalent to minimizing $\E_{P}\one\{\eta \leq g(Z)\}V(\eta,Y)$ for all $\eta \in \R$.
We can rephrase the minimization problem to reflect the way in which we prove the main result:
For a given $\eta \in \R$, we have to find an index $i \in \{1, \dots, n + 1\}$ that minimizes 
\[
s_{i}(\eta) = v_{i:n}(\eta) = \sum_{\ell = i}^n V(\eta, y_\ell).
\]
Thereby, we obey the condition
\[
\{z : g(z) \ge \eta\} = \{z_i, \dots, z_n\},
\]
implied by the monotonicity constraint on $g$.
This index search needs to be conducted for every $\eta \in \R$ separately.
In a nutshell, we find the generalized inverse to an optimal solution.
Afterwards, we define the overall minimizing function $\hat{g}$.

From now on, we assume that all indices $i,j \in \{1, \dots, n+1\}$ unless specified otherwise.

\begin{remark}\label{rem:3.1}
	The assumption that the ordering $z_1 < \dots < z_n$ is strict is non-restrictive.
	Given a series of observations $(z_1', y_{1}), \dots, (z_m', y_m)$ with non-strictly ordered or unordered $z_i'$, we can choose $z_1 < \dots < z_n < z_{n + 1} = \infty$ such that $\{z_1', \dots, z_m'\} \subseteq \{z_1, \dots, z_n\}$.
	We define the empirical counting measure for the index range from $i$ to $j$ by
	\[
	P_{i:j}(B) = \sum_{\ell = i}^j \sum_{k = 1}^m \one\{z_\ell = z_k'\} \one\{(z_k', y_k) \in B\},
	\]
	with the corresponding integral of the identification function being equal to the following sum,
	\[
	V(\eta, P_{i:j}) = v_{i:j}(\eta) = \sum_{\ell = i}^j \sum_{k = 1}^m \one\{z_\ell = z_k'\} V(\eta, y_k).
	\]
	For condition \eqref{eq:3}, we write the empirical probability distribution as $P(B) = P_{1:n}(B) / m$.
	The subsequent arguments leading to an optimal solution rely solely on the identification sum $v_{i:j}(\eta)$ and the functional $T(P_{i:j})$, where we dealt with the dependence on the number of observations for each unique value of $z_\ell$ in the above generalization.
\end{remark}

We begin by introducing sets consisting of minimizing indices.
For $\eta \in \R$, let $I(\eta)$ denote the set of indices $i$ minimizing $s_{i}(\eta)$, and define $\mathcal{I}=\bigcup_{\eta \in \R} I(\eta) \subseteq \{1, \dots, n+1\}$.
That is, $i \in \mathcal{I}$ if and only if there exists an $\eta \in \R$ such that
\begin{equation*}
s_{i}(\eta) \le s_{j}(\eta) \quad \text{for all } j.
\end{equation*} 
The following proposition is immediate.
\begin{proposition}\label{prop:immediate}
	Let $\eta \in \R$.
	The inclusion $i \in I(\eta)$ holds if and only if, 
	\begin{align*}
	v_{i:(j-1)}(\eta) \le 0 \quad \text{for all $j > i$},\\
	v_{j:(i-1)}(\eta) \ge 0 \quad \text{for all $j < i$}.
	\end{align*}
	If $j > i$ and $v_{i:(j-1)}(\eta) = 0$, then  $j \in I(\eta)$. Analogously, if $j < i$ and $v_{j:(i-1)}(\eta) = 0$, then $j \in I(\eta)$.
\end{proposition}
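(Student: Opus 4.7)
The plan is to reduce everything to a simple telescoping identity for the sums $s_i(\eta) = v_{i:n}(\eta)$. From the definition, a direct cancellation gives
\[
s_i(\eta) - s_j(\eta) \;=\; v_{i:(j-1)}(\eta) \quad \text{for } j > i,
\qquad
s_i(\eta) - s_j(\eta) \;=\; -\,v_{j:(i-1)}(\eta) \quad \text{for } j < i.
\]
I would prove this identity as the first (and only substantive) step; it is a one-line reindexing of the sum $\sum_{\ell=i}^n V(\eta,y_\ell)$.

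Next, I would unpack the membership $i \in I(\eta)$: by definition, this means $s_i(\eta) \le s_j(\eta)$ for every admissible index $j$. Splitting the condition into the cases $j > i$ and $j < i$ and substituting the telescoping identity turns the two families of inequalities into $v_{i:(j-1)}(\eta) \le 0$ for all $j > i$ and $-v_{j:(i-1)}(\eta) \le 0$, i.e.\ $v_{j:(i-1)}(\eta) \ge 0$, for all $j < i$. Since each step is an equivalence, this already yields both directions of the ``if and only if''.

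For the last two claims, I would simply note that the telescoping identity gives $s_i(\eta) = s_j(\eta)$ whenever either $j > i$ with $v_{i:(j-1)}(\eta) = 0$, or $j < i$ with $v_{j:(i-1)}(\eta) = 0$. Since $i \in I(\eta)$ means that $s_i(\eta)$ equals the common minimum value of $s_\cdot(\eta)$, any other index sharing this value also lies in $I(\eta)$.

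There is no real obstacle here: the proposition is essentially a bookkeeping statement about which partial sums control the minimum of $s_i(\eta)$. The only thing to be careful about is the sign flip between the $j > i$ and $j < i$ cases, which is the reason the two inequalities in the statement go in opposite directions.
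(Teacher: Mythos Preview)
Your proposal is correct. The paper does not give a proof at all (it simply declares the proposition ``immediate''), and your telescoping identity $s_i(\eta)-s_j(\eta)=v_{i:(j-1)}(\eta)$ for $j>i$ (and its sign-flipped version for $j<i$) is exactly the verification the paper is tacitly invoking.
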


The following proposition is a key observation to show optimality of the min-max and max-min solution.
We relate the threshold $\eta \in \R$ to the minimal and maximal elements of the functional $T$ on subsets of the data.
We write $T_{i:j}^- = T_{P_{i:j}}^- = \inf T(P_{i:j})$ and $T_{i:j}^+ = T_{P_{i:j}}^+ = \sup T(P_{i:j})$.
\begin{proposition}\label{prop:simplified_1}
	Let $\eta \in \R$, and $i \in I(\eta)$.
	Then,
	\begin{align*}
	\max_{j < i} T_{j:(i - 1)}^-
	&\le	\eta
	\le	\min_{j > i} T_{i:(j -1)}^+,\\
	\max_{j < i, j \not\in I(\eta)} T_{j:(i - 1)}^+
	&< \eta
	\le \min_{j > i, j \not\in I(\eta)} T_{i:(j -1)}^-.
	\end{align*}
\end{proposition}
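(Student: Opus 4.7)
The plan is to read off each of the four inequalities directly from Proposition \ref{prop:immediate} combined with Corollary \ref{cor:TviaV}. Since $i \in I(\eta)$, Proposition \ref{prop:immediate} gives $v_{i:(j-1)}(\eta) \le 0$ for every $j > i$ and $v_{j:(i-1)}(\eta) \ge 0$ for every $j < i$, i.e.\ $V(\eta, P_{i:(j-1)}) \le 0$ and $V(\eta, P_{j:(i-1)}) \ge 0$ in the notation of Corollary \ref{cor:TviaV}. These are exactly the hypotheses to which the three implications of Corollary \ref{cor:TviaV} apply, so I would first establish the non-strict bounds in the upper line of the display by a small case split on whether the identification sum vanishes.

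Concretely, fix $j > i$. If $v_{i:(j-1)}(\eta) = 0$ then $\eta \in T(P_{i:(j-1)})$ by Corollary \ref{cor:TviaV}, hence $\eta \le T_{i:(j-1)}^+$. If $v_{i:(j-1)}(\eta) < 0$ then the same corollary gives $\eta \le T_{i:(j-1)}^-$, which is even stronger. Taking the minimum over $j > i$ yields $\eta \le \min_{j > i} T_{i:(j-1)}^+$. A symmetric two-case argument on $j < i$, using that $V(\eta, P_{j:(i-1)}) = 0$ implies $\eta \ge T_{j:(i-1)}^-$ and $V(\eta, P_{j:(i-1)}) > 0$ implies $\eta > T_{j:(i-1)}^+ \ge T_{j:(i-1)}^-$, delivers $\eta \ge \max_{j < i} T_{j:(i-1)}^-$.

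For the strict and sharpened inequalities on the second line, the key point is the contrapositive of the last sentence of Proposition \ref{prop:immediate}: if $j > i$ and $j \notin I(\eta)$, the value $v_{i:(j-1)}(\eta)$ cannot equal zero, so from the already-known sign we must have $v_{i:(j-1)}(\eta) < 0$ strictly, and Corollary \ref{cor:TviaV} upgrades the conclusion to $\eta \le T_{i:(j-1)}^-$; minimising over such $j$ gives the right-hand bound. Similarly, for $j < i$ with $j \notin I(\eta)$, the value $v_{j:(i-1)}(\eta)$ cannot be zero and is nonnegative, hence strictly positive, which by Corollary \ref{cor:TviaV} forces $\eta > T_{j:(i-1)}^+$; taking the maximum over such $j$ yields the left-hand strict bound.

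I do not anticipate any real obstacle: the whole proof is a bookkeeping exercise tracking strict versus non-strict inequalities and invoking the three implications of Corollary \ref{cor:TviaV} in the right direction. The only subtlety is remembering to combine the sign information from Proposition \ref{prop:immediate} with the equality-case clause of that same proposition to upgrade $\le 0$ to $< 0$ (respectively $\ge 0$ to $> 0$) whenever $j \notin I(\eta)$, which is exactly what converts the weak bounds on the first line into the stronger bounds on the second.
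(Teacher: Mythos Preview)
Your proposal is correct and follows essentially the same route as the paper's own proof: use Proposition \ref{prop:immediate} to obtain the sign of $v_{i:(j-1)}(\eta)$ and $v_{j:(i-1)}(\eta)$, upgrade to strict inequality when $j \notin I(\eta)$ via the contrapositive of the equality clause, and then apply Corollary \ref{cor:TviaV}. The paper merely condenses your case splits into one sentence, but the logic is identical.
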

\begin{proof}
	For all $j < i$, we have $v_{j:(i - 1)}(\eta) \ge 0$.
	For all $j > i$, we have $v_{i:(j - 1)}(\eta) \le 0$.
	Both inequalities are strict when $j \notin I(\eta)$.
	Corollary \ref{cor:TviaV} implies the result.
\end{proof}
Figure \ref{fig:etaT} illustrates the statement in Proposition \ref{prop:simplified_1}.

\begin{figure}
	\centering
	\includegraphics{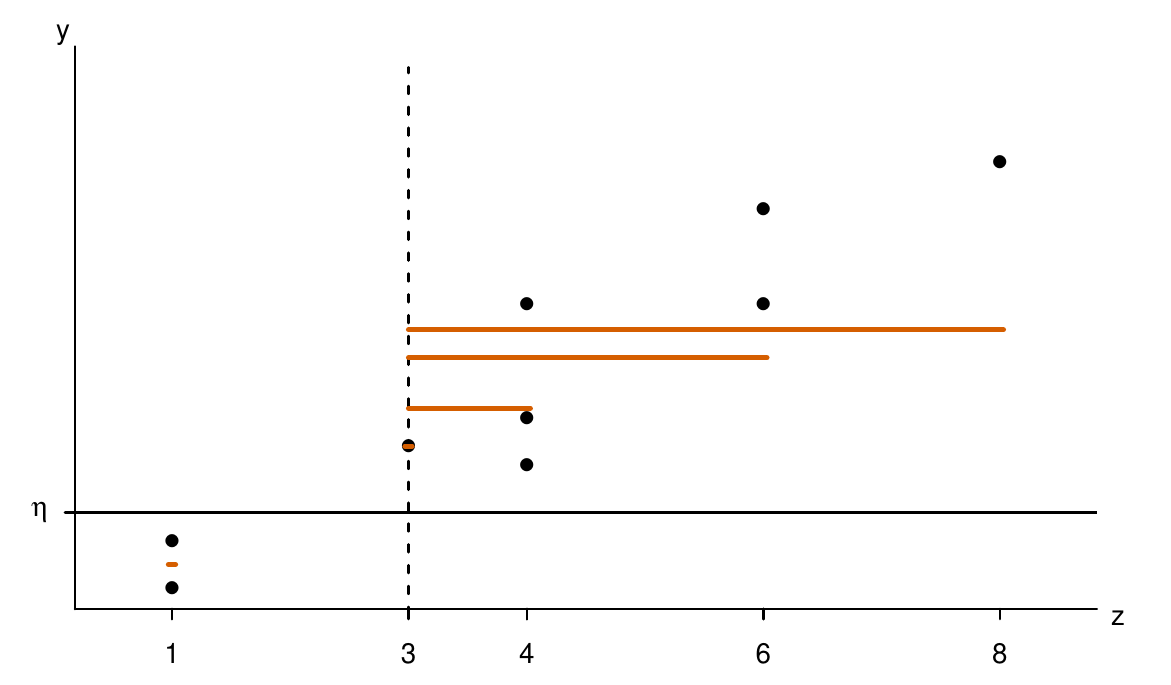}
	\caption{\textbf{Minimizing indices are separators.} For a sample of 9 data points, the graph illustrates the functional value (expectation) on relevant subsets of the data for a given $\eta$ and the minimizing index $i = 3$.
		The expectation value (vertical location of a brown line) is above or below $\eta$ when the corresponding subsample extends (horizontal extension of a brown line) to the right or left of the minimizing index, respectively \label{fig:etaT}}
\end{figure}

\begin{lemma} \label{lemma:iota}
	\begin{enumerate}[label=(\alph*)]
		\item Let $\eta, \eta' \in \R$, $\eta<\eta'$, and $i\in I(\eta)$,
		$j\in I(\eta')$. Then, $\min\{i,j\} \in I(\eta)$ and 
		$\max\{i,j\} \in I(\eta')$.
		\item For all $\eta \in \R$, $I(\eta)$ is a set of consecutive 
		indices in $\mathcal{I}$.
		In other words, if $i,j \in I(\eta)$ and $i<i_0 < j$ such that
		$i_0 \notin I(\eta)$, then $i_0 \notin \mathcal{I}$.
		\item The functions
		\[
		\eta \mapsto \min I(\eta) \quad \text{and} \quad \eta 
		\mapsto \max I(\eta)
		\]
		are increasing.
		
		\item Suppose that $\eta_m \uparrow \eta$ and 
		$i \in I(\eta_m)$ for all $m \in \mathbb{N}$.
		Then, $i \in I(\eta)$.
		\label{lemma:iota-c}
	\end{enumerate}
\end{lemma}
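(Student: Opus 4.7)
The plan is to prove (a) first and derive (b) and (c) as direct consequences, then handle (d) separately via left-continuity of $V$. The main obstacle will be part (a): without a way to force equality of certain partial sums, there is no obvious mechanism to transfer a minimizer at $\eta'$ back to a minimizer at $\eta$. Everything else in the lemma flows from that sandwich identity.

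For (a), fix $\eta < \eta'$ with $i \in I(\eta)$ and $j \in I(\eta')$. The case $i \le j$ is trivial, so I focus on $j < i$. Proposition~\ref{prop:immediate} applied to $i \in I(\eta)$ gives $v_{j:(i-1)}(\eta) \ge 0$, while applied to $j \in I(\eta')$ (with $j < i$) it gives $v_{j:(i-1)}(\eta') \le 0$. Since $V(\cdot, y)$ is increasing in its first argument and $\eta < \eta'$, one also has $v_{j:(i-1)}(\eta) \le v_{j:(i-1)}(\eta')$. Together these three relations sandwich both sums to zero, and the final clause of Proposition~\ref{prop:immediate} then promotes $j$ into $I(\eta)$ and $i$ into $I(\eta')$.

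Part (b) follows by contradiction from (a): suppose $i < i_0 < j$ with $i, j \in I(\eta)$, $i_0 \notin I(\eta)$, yet $i_0 \in I(\eta_0)$ for some $\eta_0$. If $\eta_0 > \eta$, applying (a) to the pair $(\eta, \eta_0)$ with $j \in I(\eta)$ and $i_0 \in I(\eta_0)$ yields $\min\{j, i_0\} = i_0 \in I(\eta)$, contradicting $i_0 \notin I(\eta)$; the case $\eta_0 < \eta$ is symmetric, using $i$ and the $\max$ conclusion, and $\eta_0 = \eta$ is immediate. For (c), pick $i = \min I(\eta)$ and $j = \min I(\eta')$; part (a) places $\min\{i,j\}$ in $I(\eta)$, forcing $i \le j$, i.e., $\min I(\eta) \le \min I(\eta')$. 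The argument for $\max I(\cdot)$ is analogous, taking $i = \max I(\eta)$ and $j = \max I(\eta')$ and using $\max\{i,j\} \in I(\eta')$.

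For (d), I would pass to the limit in the inequalities characterizing $i \in I(\eta_m)$ from Proposition~\ref{prop:immediate}. Left-continuity of $V(\cdot, y)$ in its first argument yields $V(\eta_m, y) \to V(\eta, y)$ as $\eta_m \uparrow \eta$, so each finite sum $v_{i:(k-1)}(\eta_m)$ converges to $v_{i:(k-1)}(\eta)$, and the nonstrict inequality $\le 0$ is preserved. The companion inequality $v_{k:(i-1)}(\eta_m) \ge 0$ passes through in the same way, which together place $i$ in $I(\eta)$.
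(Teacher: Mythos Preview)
Your proof is correct and follows essentially the same approach as the paper: the sandwich argument via monotonicity of $V$ for part (a), and left-continuity for part (d). The only organizational difference is that you derive (b) and (c) as corollaries of (a), whereas the paper reproves each by a direct monotonicity contradiction; your route is slightly more economical but not substantively different.
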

\begin{proof}
	\begin{enumerate}[label=(\alph*)]
		\item If $j \ge i$ the statement is trivial. Otherwise,
		$v_{j:(i-1)}(\eta) \geq 0 \geq v_{j:(i-1)}(\eta')$ by Proposition
		\ref{prop:immediate}, hence $v_{j:(i-1)}(\eta'') = 0$ for all 
		$\eta'' \in [\eta, \eta']$ due to the monotonicity of the
		identification function. The statement follows from Proposition
		\ref{prop:immediate}.
		\item Suppose the contrary: There exists an $\eta' \ne \eta$ such 
		that $i_0 \in I(\eta')$.
		If $\eta' < \eta$, we have that $v_{i:(i_0 - 1)}(\eta') \ge 0$.
		Similarly, since $i_0 \notin I(\eta)$ it holds that 
		$v_{i:(i_0-1)}(\eta) < 0$.
		This contradicts the monotonicity assumption for the first 
		argument of $V$.
		The argument against an $\eta' > \eta$ such that 
		$i_0 \in I(\eta')$ works similarly.
		
		\item Let $\eta < \eta'$ and suppose the contrary: Let 
		$i = \min I(\eta)$ and $i' = \min I(\eta')$ such that $i > i'$.
		Then, $i' \not\in I(\eta)$, and we have 
		$v_{i':(i - 1)}(\eta') \le 0$ and $v_{i':(i - 1)}(\eta) > 0$, 
		contradicting the monotonicity assumption for the first 
		argument of $V$.
		The argument for $\eta \mapsto \max I(\eta)$ works similarly.
		
		\item
		The left-continuity of the identification function implies that 
		$s_{j}(\eta_m) \uparrow s_{j}(\eta)$ as $\eta_m \uparrow \eta$ 
		for all $j$.
		If $i \in I(\eta_m)$ for all $m \in \mathbb{N}$, then 
		$s_{i}(\eta_m) \le s_{j}(\eta_m)$ for all $m \in \mathbb{N}$ and 
		all $j$, which in combination with the left-continuity implies
		$s_{i}(\eta) \le s_{j}(\eta)$ for all $j$.
	\end{enumerate}
\end{proof}

Lemma \ref{lemma:iota} confirms the existence of a left-continuous function $\iota\colon \R \to \{1, \dots, n+1\}$ mapping $\eta$ to a score-minimizing index $i$ that indicates the smallest $z_\ell$ in the corresponding set $\{z_i, \dots, z_n\}$.
Note that $\lim_{\eta \to -\infty}\iota(\eta) = 1$ and $\lim_{\eta \to \infty}\iota(\eta) = n+1$.

Then, any function $g\colon \{z_1, \dots, z_n\} \to \R$ with superlevel sets corresponding to the sets induced by $\iota$, i.e., with $g(z_\ell) \ge \eta$ for all $z_\ell \in \{z_{\iota(\eta)}, \dots, z_n\}$ for all $\eta \in \R$, must be an optimizing solution.
In fact, this solution is unique for a given $\iota$ because monotone functions are characterized by their superlevel sets.
\begin{proposition} \label{prop:simplified_2}
	Let $\iota\colon \R \to \{1, \dots, n+1\}$ be an increasing, left-continuous function such that $\iota(\eta) \in I(\eta)$.
	Then, the function $\hat{g}\colon \{z_1, \dots, z_n\} \to \R$ given by
	\begin{equation}\label{eq:hatg}
	\inf\{\eta : \iota(\eta) > \ell\}
	= \hat{g}(z_\ell)
	= \max\{\eta: \iota(\eta) \le \ell \}
	\end{equation}
	is the unique function that satisfies
	\[
	\{z : g(z) \geq \eta\} = \{z_{\iota(\eta)}, \dots, z_n\} \quad \text{ for all } \eta \in \R,
	\]
	among all increasing functions $g\colon \{z_1, \dots, z_n\} \to \R$.
\end{proposition}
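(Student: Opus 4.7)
The plan is to first show that the two expressions defining $\hat{g}(z_\ell)$ agree, then verify that $\hat{g}$ is increasing and realizes the prescribed superlevel sets, and finally use the fact that any increasing function on a finite chain is determined by its superlevel sets to obtain uniqueness.

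For the equality $\inf\{\eta : \iota(\eta) > \ell\} = \max\{\eta : \iota(\eta) \le \ell\}$, I will exploit monotonicity and left-continuity of $\iota$ together with its integer values. Writing $A_\ell = \{\eta : \iota(\eta) \le \ell\}$ and $B_\ell = \{\eta : \iota(\eta) > \ell\}$, these sets partition $\R$, and monotonicity of $\iota$ forces $A_\ell$ to be a downward ray and $B_\ell$ an upward ray. Since $\iota(\eta) \to 1$ as $\eta \to -\infty$ and $\iota(\eta) \to n+1$ as $\eta \to +\infty$, both are nonempty when $\ell \in \{1,\dots,n\}$, so $\sup A_\ell = \inf B_\ell =: \eta^*$ is finite. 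Integer-valuedness combined with left-continuity guarantees that whenever $\eta_m \uparrow \eta_0$ one eventually has $\iota(\eta_m) = \iota(\eta_0)$; consequently $A_\ell$ is closed under increasing limits, forcing $\eta^* \in A_\ell$. Hence $\inf B_\ell$ is not attained and coincides with $\max A_\ell$, justifying the common value $\hat{g}(z_\ell)$.

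Taking $\hat{g}(z_\ell) = \max A_\ell$, monotonicity of $\hat{g}$ in $\ell$ is immediate from $A_\ell \subseteq A_{\ell+1}$. For the superlevel set identity I verify the equivalence $\hat{g}(z_\ell) \ge \eta \iff \iota(\eta) \le \ell$: if $\iota(\eta) \le \ell$ then $\eta \in A_\ell$, so $\eta \le \max A_\ell = \hat{g}(z_\ell)$; conversely, if $\eta \le \hat{g}(z_\ell)$ then monotonicity of $\iota$ together with $\hat{g}(z_\ell) \in A_\ell$ yields $\iota(\eta) \le \iota(\hat{g}(z_\ell)) \le \ell$. Rewriting the equivalence gives $\{z : \hat{g}(z) \ge \eta\} = \{z_{\iota(\eta)},\dots,z_n\}$. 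For uniqueness, any increasing $g\colon\{z_1,\dots,z_n\}\to\R$ satisfies $g(z_\ell) = \sup\{\eta : g(z_\ell)\ge\eta\}$; if $g$ realizes the prescribed superlevel sets, then $g(z_\ell)\ge\eta \iff \iota(\eta) \le \ell$, which forces $g(z_\ell) = \sup A_\ell = \hat{g}(z_\ell)$.

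The main subtlety I anticipate is ensuring that the supremum in the definition of $\hat{g}$ is attained as a maximum, which relies crucially on the left-continuity of $\iota$ guaranteed by Lemma \ref{lemma:iota}. Everything else is a direct consequence of the one-to-one correspondence between increasing functions on a finite chain and nested families of superlevel sets.
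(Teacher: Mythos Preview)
Your proposal is correct and follows essentially the same approach as the paper: both use monotonicity and left-continuity of $\iota$ to show the infimum equals the maximum, then verify the superlevel-set identity via the equivalence $\hat{g}(z_\ell)\ge\eta \iff \iota(\eta)\le\ell$, and conclude uniqueness from the fact that an increasing function on a chain is determined by its superlevel sets. One small remark: left-continuity of $\iota$ is a hypothesis of the proposition itself, not something you need to invoke Lemma~\ref{lemma:iota} for; that lemma merely guarantees that such a $\iota$ exists.
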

\begin{proof}
	Due to the monotonicity and left-continuity of $\iota \colon \R \to \{1, \dots, n+1\}$, we have
	$\inf\{\eta : \iota(\eta) > \ell\}
	= \max\{\eta: \iota(\eta) \le \ell \}$, $\ell =1, \dots, n$.
	The monotonicity of $\hat{g}$ follows from the monotonicity of $\iota$ and the fact that $\{z_1, \dots, z_n\}$ is ordered.
	Let $\eta' \in \R$.
	Then,
	\begin{align*}
	& \text{(i)} \quad \hat{g}(z_\ell) \ge \eta' \implies
	\iota(\hat{g}(z_\ell)) \ge \iota(\eta') \implies
	\ell \ge \iota(\eta'),
	\\
	& \text{(ii)} \quad \hat{g}(z_{\iota(\eta')}) 
	= \max \{\eta : \iota(\eta) = \iota(\eta')\} \ge \eta'.
	\end{align*}
	Therefore, $\{z : \hat{g}(z) \ge \eta'\} \subseteq \{z_{\iota(\eta')}, \dots, z_n\} \subseteq \{z : \hat{g}(z) \ge \eta'\}$ where the first inclusion follows by (i) and the second by (ii).
	Uniqueness follows because any hypothetical alternative $\bar{g}$ with $\bar{g}(z_\ell) \neq \hat{g}(z_\ell)$ for some $\ell \in \{1, \dots, n\}$ leads to the contradiction $\{z_{\iota(\eta)}, \dots, z_n\} = \{z : \bar{g}(z) \geq \eta\} \neq \{z : \hat{g}(z) \geq \eta\} = \{z_{\iota(\eta)}, \dots, z_n\}$ for all $\eta$ between $\bar{g}(z_\ell)$ and $\hat{g}(z_\ell)$.
\end{proof}

In Figure \ref{fig:ghat} we give an example for 6 data points.
The example illustrates how the values $\hat{g}(z_\ell)$, $\ell = 1, \dots, n$, can be determined from the epigraph of the function $\eta \mapsto z_{\iota(\eta)}$.

\begin{figure}
	\centering \includegraphics{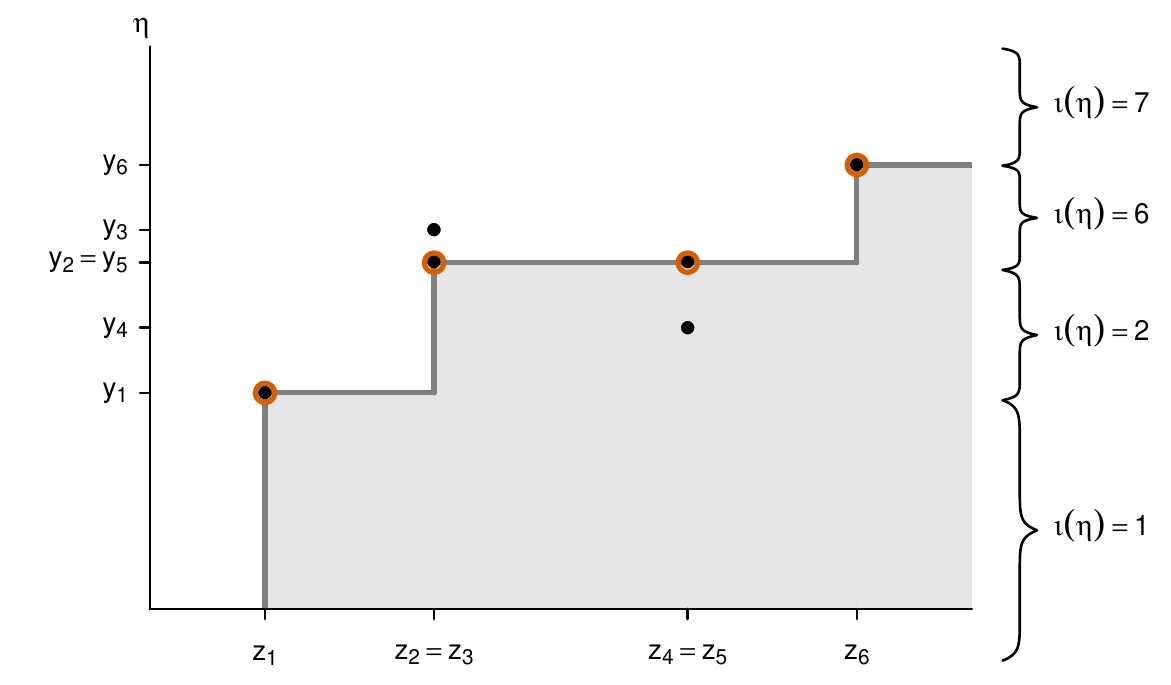}
	\caption{\textbf{Graph of $\hat{g}$.} For a sample of 6 data points, the values of $\hat{g}(z)$ for $z = z_1, \dots, z_6$ are shown in red.
		The epigraph of the function $\eta \mapsto z_{\iota(\eta)}$ is shown in grey, where $T$ is chosen as the median functional to choose $\iota(\eta)$ \label{fig:ghat}}
\end{figure}

Now, we can state and show one of our main results which is that $\hat{g}$ coincides with or is bounded by a min-max and max-min solution.

\begin{proposition}\label{prop:simplified_main1}
	Let $\ell \in \{1, \dots, n\}$ and let $\hat{g}$ be a solution to the isotonic regression problem.
	Then, 
	\[
	\min_{j \geq \ell} \, \max_{i \leq j} T_{i:j}^-
	\leq \hat{g}(z_\ell)
	\leq \max_{i \leq \ell} \, \min_{j \geq i} T_{i:j}^+.
	\]
\end{proposition}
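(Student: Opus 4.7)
The plan is to combine the characterization of $\hat{g}$ via an increasing, left-continuous selector $\iota$ from Proposition \ref{prop:simplified_2} with the two-sided bracket on $\eta$ furnished by Proposition \ref{prop:simplified_1}. With $\eta^{\ast} := \hat{g}(z_\ell)$, the double representation
\[
\eta^{\ast} = \max\{\eta : \iota(\eta) \le \ell\} = \inf\{\eta : \iota(\eta) > \ell\}
\]
splits naturally: the first form drives the upper bound and the second the lower bound.

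For the upper bound, I would first observe that the set $\{\eta : \iota(\eta) \le \ell\}$ is non-empty (since $\iota(\eta) \to 1$ as $\eta \to -\infty$) and that by left-continuity of $\iota$ the maximum is attained, so $i^{\ast} := \iota(\eta^{\ast}) \le \ell$, and in particular $i^{\ast} \in I(\eta^{\ast})$. The right-hand inequality of Proposition \ref{prop:simplified_1} applied at $(\eta^{\ast}, i^{\ast})$ then gives $\eta^{\ast} \le \min_{j > i^{\ast}} T_{i^{\ast}:(j-1)}^{+}$, which after the reindexing $j' = j-1$ reads $\eta^{\ast} \le \min_{j' \ge i^{\ast}} T_{i^{\ast}:j'}^{+}$. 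Since $i^{\ast} \le \ell$, this is dominated by $\max_{i \le \ell}\min_{j \ge i} T_{i:j}^{+}$, as required.

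For the lower bound, the infimum description lets me choose, for each $\epsilon > 0$, some $\eta_{\epsilon} < \eta^{\ast} + \epsilon$ with $j^{\ast} := \iota(\eta_{\epsilon}) > \ell$. The left-hand inequality of Proposition \ref{prop:simplified_1} at $(\eta_{\epsilon}, j^{\ast})$ yields $\max_{j < j^{\ast}} T_{j:(j^{\ast}-1)}^{-} \le \eta_{\epsilon}$; setting $k := j^{\ast}-1 \ge \ell$, this becomes $\max_{j \le k} T_{j:k}^{-} \le \eta_{\epsilon}$, whence $\min_{k \ge \ell}\max_{j \le k} T_{j:k}^{-} \le \eta_{\epsilon} < \eta^{\ast} + \epsilon$. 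Letting $\epsilon \downarrow 0$ completes the proof.

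The main obstacle I anticipate is the bookkeeping of index shifts between the form used in Proposition \ref{prop:simplified_1} (ranges $j > i$ and subranges involving $j - 1$ or $i - 1$) and the cleaner $\min_{j \ge i} T_{i:j}^{+}$, $\max_{i \le j} T_{i:j}^{-}$ ranges in the statement, together with verifying that the resulting index sets are non-empty (notably $i^{\ast} \le n$, which follows from $i^{\ast} \le \ell \le n$, and $j^{\ast} \ge \ell+1$). Possibly infinite values of $T_{i:j}^{\pm}$ are harmless, since the relevant inequalities persist in $\bar{\R}$.
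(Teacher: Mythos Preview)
Your proposal is correct and follows essentially the same route as the paper: apply the first pair of inequalities from Proposition \ref{prop:simplified_1} to the two representations of $\hat{g}(z_\ell)$ in \eqref{eq:hatg}, then reindex and pass to the outer $\max$/$\min$. The only difference is cosmetic: you spell out the $\epsilon$-argument for the infimum side explicitly, whereas the paper writes the bound directly as $\inf_{\iota(\eta)>\ell}\max_{i<\iota(\eta)}T_{i:(\iota(\eta)-1)}^{-}$ and observes it is bounded below by $\min_{j\ge\ell}\max_{i\le j}T_{i:j}^{-}$.
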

\begin{proof}
	Applying the first set of bounds from Proposition \ref{prop:simplified_1} to the formula for $\hat{g}$ at \eqref{eq:hatg} yields
	\[
	\inf_{\iota(\eta) > \ell} \, \max_{i < \iota(\eta)} T_{i:(\iota(\eta) - 1)}^-
	\le \hat{g}(z_\ell)
	\le \max_{\iota(\eta) \le \ell} \, \min_{j > \iota(\eta)} T_{\iota(\eta):(j - 1)}^+.
	\]
	The lower bound is bounded below by $\min_{j \ge \ell} \, \max_{i \le j} T_{i:j}^-$, and the upper bound is bounded above by $\max_{i \le \ell} \min_{j \ge i} T_{i:j}^+$.
\end{proof}

The max-min inequality implies that for functionals $T$ of singleton type, e.g., the expectation or expectile functionals, the lower and upper bound in Proposition \ref{prop:simplified_main1} are equal.
In general, a similar statement always holds, where the choice of $\iota$ determines whether $\hat{g}$ attains the minimal or maximal elements of the functional.
\begin{proposition} \label{prop:simplified_main2}
	Let $\ell \in \{1, \dots, n\}$.
	\begin{enumerate}[label=(\alph*)]
		\item If $\iota(\eta)=\min I(\eta)$ for all $\eta \in \R$, then,
		\[
		\hat{g}(z_\ell)  
		= \min_{j \geq \ell} \, \max_{i \leq j} T_{i:j}^+ 
		= \max_{i \leq \ell} \, \min_{j \geq i} T_{i:j}^+.
		\]
		
		\item If $\iota(\eta)=\max I(\eta)$ for all $\eta \in \R$, then,
		\[
		\hat{g}(z_\ell)  
		= \min_{j \geq \ell} \, \max_{i \leq j} T_{i:j}^-
		= \max_{i \leq \ell} \, \min_{j \geq i} T_{i:j}^-.
		\]
	\end{enumerate}
\end{proposition}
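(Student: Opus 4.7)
The plan is to squeeze $\hat g(z_\ell)$ into a single value by chaining the two-sided bound of Proposition \ref{prop:simplified_main1} with the standard max--min inequality, and then closing the remaining gap using the sharper, strict inequality already available in the second line of Proposition \ref{prop:simplified_1}.

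First, I would introduce the shorthand $A^\pm(\ell) = \max_{i \le \ell}\min_{j \ge i} T_{i:j}^\pm$ and $B^\pm(\ell) = \min_{j \ge \ell}\max_{i \le j} T_{i:j}^\pm$. Proposition \ref{prop:simplified_main1} directly gives $B^-(\ell) \le \hat g(z_\ell) \le A^+(\ell)$, and the classical max--min inequality (applied to the rectangle $\{(i,j) : i \le \ell \le j\}$) yields $A^{\pm}(\ell) \le B^{\pm}(\ell)$. Concatenating these produces the master chain
\[
A^-(\ell) \le B^-(\ell) \le \hat g(z_\ell) \le A^+(\ell) \le B^+(\ell).
\]
Hence part (a) reduces to proving $\hat g(z_\ell) \ge B^+(\ell)$, and part (b) reduces to $\hat g(z_\ell) \le A^-(\ell)$.

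For part (a), the key observation is that when $\iota(\eta) = \min I(\eta)$, every index $k < \iota(\eta)$ lies outside $I(\eta)$, so the strict bound from Proposition \ref{prop:simplified_1} simplifies to $\max_{k < i} T_{k:(i-1)}^+ < \eta$ with $i := \iota(\eta)$. I would then pick $\eta > \hat g(z_\ell)$; Proposition \ref{prop:simplified_2} forces $i \ge \ell+1$, so $j := i-1 \ge \ell$ is admissible for the outer minimum in $B^+(\ell)$. This gives $B^+(\ell) \le \max_{k \le j} T_{k:j}^+ < \eta$, and sending $\eta \downarrow \hat g(z_\ell)$ finishes the argument. Part (b) is the mirror image: with $\iota(\eta) = \max I(\eta)$, all $k > \iota(\eta)$ lie outside $I(\eta)$, and the analogous half of Proposition \ref{prop:simplified_1} collapses to $\eta \le \min_{j \ge i} T_{i:j}^-$. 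Taking $\eta = \hat g(z_\ell)$ itself---which, by left-continuity of $\iota$, satisfies $i := \iota(\eta) \le \ell$---the right-hand side is dominated by $A^-(\ell)$, as required.

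The main obstacle I foresee is the bookkeeping around strict versus weak inequalities and the direction from which $\eta$ must approach $\hat g(z_\ell)$. The bound in (a) rests on a strict inequality, forcing me to push $\eta$ strictly above $\hat g(z_\ell)$ and take a one-sided limit, while (b) uses a non-strict bound that can be evaluated directly at $\hat g(z_\ell)$ precisely because left-continuity of $\iota$ keeps $\hat g(z_\ell)$ inside $\{\eta : \iota(\eta) \le \ell\}$. A small but necessary sanity check will be verifying that the auxiliary index $j = i-1$ in (a) and $j = i$ in (b) actually lies in the admissible range $\{\ell, \ldots, n\}$, which ultimately rides on the boundary conditions $\lim_{\eta \to -\infty}\iota(\eta)=1$ and $\lim_{\eta \to \infty}\iota(\eta)=n+1$.
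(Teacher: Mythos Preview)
Your proposal is correct and follows essentially the same route as the paper: both arguments combine the already-proved sandwich $B^-(\ell)\le \hat g(z_\ell)\le A^+(\ell)$ and the max--min inequality with the \emph{second} line of Proposition~\ref{prop:simplified_1}, exploiting that the exclusion $j\notin I(\eta)$ becomes vacuous on the relevant side when $\iota(\eta)=\min I(\eta)$ (resp.\ $\max I(\eta)$). Your explicit ``pick $\eta>\hat g(z_\ell)$ and let $\eta\downarrow\hat g(z_\ell)$'' step just unpacks the paper's use of $\hat g(z_\ell)=\inf\{\eta:\iota(\eta)>\ell\}$, and your direct evaluation at $\eta=\hat g(z_\ell)$ in (b) is exactly the paper's use of $\hat g(z_\ell)=\max\{\eta:\iota(\eta)\le\ell\}$.
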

\begin{proof}
	The proof works the same way as the proof of Proposition \ref{prop:simplified_main1} but using second set of bounds in Proposition \ref{prop:simplified_1}.
	This is possible because in (a) we have that for $j < \iota(\eta)$ it holds that $j \not\in I(\eta)$ and in (b), for $j > \iota(\eta)$ we know that $j \not\in I(\eta)$.
\end{proof}

Let us denote the solution in part (a) of Proposition \ref{prop:simplified_main2} by $g^+$ and the one in part (b) by $g^-$.
Clearly, it always holds that $g^- \le g^+$.
It is a natural question whether any increasing function $g$ that satisfies $g^- \le g \le g^+$ is also a minimizer of the criterion \eqref{eq:3}.
It turns out that the answer is negative; see \citet[Remark 2.2, Example 2.4]{MoschingDumbgen2019}.
The following proposition provides a simple sufficient criterion for $g$ to also be a solution.
In the case of quantiles and for the classical asymmetric linear loss, the same result is shown in \citet[Lemma 2.1]{MoschingDumbgen2019}.
Note that in Proposition \ref{prop:suffcrit} it is not required that $g^-$, $g^+$ are the solutions from Proposition \ref{prop:simplified_main2} as long as they satisfy $g^- \le g^+$.

\begin{proposition}\label{prop:suffcrit}
	Let $g^-, g^+$ be two solutions to the isotonic regression problem, that is, minimizers of \eqref{eq:3}, and suppose they satisfy $g^- \le g^+$.
	Let $\hat{g}$ be increasing, $g^- \le \hat{g}\le g^+$, and suppose that $g^+(z_\ell) = g^+(z_{\ell'})$, $g^-(z_\ell) = g^-(z_{\ell'})$ for some $\ell < \ell'$ implies $\hat{g}(z_\ell) = \hat{g}(z_{\ell'})$.
	Then, $\hat{g}$ is also a minimizer of \eqref{eq:3}, that is, a solution to the isotonic regression problem.
\end{proposition}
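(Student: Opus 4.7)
The plan is to recast the statement in terms of the superlevel-set representation of increasing functions, which is exactly the language in which Proposition \ref{prop:simplified_2} and Lemma \ref{lemma:iota} operate. For any increasing $g\colon\{z_1,\dots,z_n\}\to\bar\R$ define
\[
\iota_g(\eta)=\min\{\ell\in\{1,\dots,n\}:g(z_\ell)\ge\eta\},
\]
with the convention $\iota_g(\eta)=n+1$ if the set is empty, so that $\{z:g(z)\ge\eta\}=\{z_{\iota_g(\eta)},\dots,z_n\}$. Since $\E_P S_\eta(g(Z),Y)$ depends on $g$ only through this superlevel set, being a minimizer of \eqref{eq:3} is equivalent to $\iota_g(\eta)\in I(\eta)$ for every $\eta\in\R$. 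Hence the task is to show $\iota_{\hat g}(\eta)\in I(\eta)$ for all $\eta$, given that $\iota_{g^-}(\eta),\iota_{g^+}(\eta)\in I(\eta)$.

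Next I would translate the pointwise ordering $g^-\le\hat g\le g^+$ into an ordering of indices. The larger a function is, the larger its superlevel set, hence the smaller the leftmost index of that set, giving
\[
\iota_{g^+}(\eta)\le\iota_{\hat g}(\eta)\le\iota_{g^-}(\eta)\quad\text{for every }\eta\in\R.
\]
The boundary cases are then easy: if $\iota_{\hat g}(\eta)=1$ the sandwich forces $\iota_{g^+}(\eta)=1\in I(\eta)$, and if $\iota_{\hat g}(\eta)=n+1$ it forces $\iota_{g^-}(\eta)=n+1\in I(\eta)$; in both situations $\iota_{\hat g}(\eta)$ coincides with a value already known to belong to $I(\eta)$.

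The heart of the argument concerns an index $\ell=\iota_{\hat g}(\eta)$ with $1<\ell\le n$. By definition such an $\ell$ witnesses a strict jump of $\hat g$, namely $\hat g(z_{\ell-1})<\eta\le\hat g(z_\ell)$, so in particular $\hat g(z_{\ell-1})<\hat g(z_\ell)$. The contrapositive of the hypothesis on $\hat g$ then rules out the possibility that both $g^-(z_{\ell-1})=g^-(z_\ell)$ and $g^+(z_{\ell-1})=g^+(z_\ell)$: at least one of $g^-,g^+$ must also have a strict jump at $\ell$. Choosing $\eta'$ in the corresponding jump interval shows $\ell=\iota_{g^-}(\eta')$ or $\ell=\iota_{g^+}(\eta')$, so $\ell\in\mathcal I$.

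Finally I would invoke Lemma \ref{lemma:iota}(b): $I(\eta)$ is a set of consecutive indices in $\mathcal I$, so since $\iota_{g^+}(\eta)$ and $\iota_{g^-}(\eta)$ both lie in $I(\eta)$ and sandwich $\iota_{\hat g}(\eta)\in\mathcal I$, we must have $\iota_{\hat g}(\eta)\in I(\eta)$, which is what we wanted. The main obstacle is the bookkeeping step linking the hypothesis on simultaneous ties of $g^-$ and $g^+$ to membership of $\hat g$'s jump indices in $\mathcal I$; once that contrapositive is stated correctly, Lemma \ref{lemma:iota}(b) does the remaining work and no further computation is needed.
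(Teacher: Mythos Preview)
Your proof is correct and follows essentially the same route as the paper's: pass to the superlevel-set indices $\iota_g$, use $g^-\le\hat g\le g^+$ to sandwich $\iota_{\hat g}(\eta)$ between $\iota_{g^+}(\eta)$ and $\iota_{g^-}(\eta)$, use the hypothesis (via its contrapositive) to place every jump index of $\hat g$ in $\mathcal I$, and conclude with Lemma~\ref{lemma:iota}(b). The paper presents the last two steps as two separate ``observations'' rather than an explicit contrapositive, but the content is identical.
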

\begin{proof}
	For $\eta \in \R$, define $\iota(\eta):= \min\{\ell : \hat{g}(z_\ell) \ge \eta\}$, and analogously $\iota^-(\eta)$ and $\iota^+(\eta)$ with $\hat{g}$ replaced by $g^-$ and $g^+$, respectively.
	The functions $\iota$, $\iota^-$, $\iota^+$ are increasing and left-continuous.
	For $\iota^-$, $\iota^+$ it holds that $\iota^-(\eta), \iota^+(\eta) \in I(\eta)$.
	For all $\ell \in \{1, \dots, n\}$, we have that
	\begin{align*}
	g^-(z_\ell) &= \max\{\eta : \iota^-(\eta) \le \ell\}\\ &\le \hat{g}(z_\ell) = \max\{\eta : \iota(\eta) \le \ell\} \le g^+(z_\ell) = \max\{\eta : \iota^+(\eta) \le \ell\},
	\end{align*}
	therefore, $\iota^+(\eta) \le \iota(\eta) \le \iota^-(\eta)$ for all $\eta \in \R$.
	By Lemma \ref{lemma:iota} (b), it remains to show that $\iota(\eta) \in \mathcal{I}$.
	This follows from the following two observations.
	
	First, if 
	\[
	\hat{g}(z_\ell) = \max\{\eta : \iota(\eta) \le \ell\} = \max\{\eta : \iota(\eta) \le \ell'\}= \hat{g}(z_{\ell'}) 
	\]
	for some $\ell \le \ell'$, then $\iota(\eta) \not\in\{\ell+1,\dots,\ell'\}$.
	Second, if $g^-(z_\ell) < g^-(z_{\ell+1})$ or $g^+(z_\ell) < g^+(z_{\ell+1})$, then $\ell + 1 \in \mathcal{I}$.
\end{proof}

The proof of Proposition \ref{prop:suffcrit} shows that $\hat g$ may jump at points $z_\ell$ where $g^+$ and $g^-$ do not jump as long as $\ell \in \mathcal{I}$, that is, as long as $\ell$ is a minimizing index for some $\eta$.
The following Proposition \ref{prop:immediate2} characterizes the possible additional jumps of $\hat g$.

\begin{proposition} \label{prop:immediate2}
	Let $g^-, g^+$ be two solutions to the isotonic regression problem, and suppose that for some $i,j \in \{1, \dots, n\}$, $i<j$,
	\[
	\eta^-:= g^-(z_i) = g^-(z_j) <  g^+(z_i) = g^+(z_j) =:\eta^+.
	\]
	Furthermore, assume it holds that for $i > 1$, $g^-(z_{i-1}) \not=g^-(z_i)$ or $g^+(z_{i-1}) \not=g^+(z_i)$, and for $j < n$, $g^-(z_{j}) \not=g^-(z_{j+1})$ or $g^+(z_{j}) \not=g^+(z_{j+1})$.
	Then for $\ell \in \{i+1, \dots, j\}$, we have $T_{i:(\ell-1)}^- \le \eta^-$ if and only if $\ell\in I(\eta)$ for all $\eta \in (\eta^-, \eta^+]$.
\end{proposition}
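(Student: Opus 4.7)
The plan is to characterize the membership $\ell \in I(\eta)$ for $\eta \in (\eta^-, \eta^+]$ through the global set $\mathcal{I}$, and then translate it into the stated identification-function condition via Proposition \ref{prop:immediate}. The enabling step is to show that both endpoint indices $i$ and $j+1$ lie in $I(\eta)$ uniformly for $\eta \in (\eta^-, \eta^+]$; once this is established, Lemma \ref{lemma:iota}(b) sandwiches any interior index $\ell$ between them, and the remaining content becomes a short identification-function computation.

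First, I would verify that $i, j+1 \in \mathcal{I}$. Using the generalized inverses $\iota^\pm(\eta) = \min\{\ell : g^\pm(z_\ell) \ge \eta\}$, which by the proof of Proposition \ref{prop:suffcrit} belong to $I(\eta)$, the boundary assumptions supply a jump of either $g^-$ or $g^+$ at $z_{i-1}$ (respectively $z_{j+1}$), which forces the corresponding $\iota^\pm$ to take the value $i$ (respectively $j+1$) at a suitable $\eta$. The cases $i = 1$ and $j = n$ are trivial since $1, n+1 \in \mathcal{I}$ automatically.

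Second, I would argue that $i, j+1 \in I(\eta)$ for every $\eta \in (\eta^-, \eta^+]$. Since $g^+(z_i) = \eta^+ \ge \eta$ we have $\iota^+(\eta) \le i$, and since $g^-(z_j) = \eta^- < \eta$ we have $\iota^-(\eta) \ge j+1$. Combining $\iota^\pm(\eta) \in I(\eta)$ with $i, j+1 \in \mathcal{I}$, Lemma \ref{lemma:iota}(b) sandwiches $i$ and $j+1$ into $I(\eta)$. Applying that lemma again, for $\ell \in \{i+1, \dots, j\}$ the condition ``$\ell \in I(\eta)$ for all $\eta \in (\eta^-, \eta^+]$'' is equivalent to $\ell \in \mathcal{I}$, and equivalent to the single statement $\ell \in I(\eta^+)$.

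Third, I would convert this $\mathcal{I}$-membership into the identification-function condition via Proposition \ref{prop:immediate}. In the forward direction, $i, \ell \in I(\eta)$ throughout $(\eta^-, \eta^+]$ forces $v_{i:(\ell-1)}(\eta) = 0$ on that interval, and monotonicity of $V$ in its first argument then yields $T_{i:(\ell-1)}^- \le \eta^-$. Conversely, $T_{i:(\ell-1)}^- \le \eta^-$ implies $v_{i:(\ell-1)}(\eta^+) \ge 0$, which combined with $v_{i:(\ell-1)}(\eta^+) \le 0$ coming from $i \in I(\eta^+)$ gives $v_{i:(\ell-1)}(\eta^+) = 0$, and the final clause of Proposition \ref{prop:immediate} delivers $\ell \in I(\eta^+)$. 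I expect the main obstacle to be the second step---pinning down $i$ and $j+1$ as stable elements of $I(\eta)$ across the entire interval---after which the equivalence reduces to routine bookkeeping with the tools already on hand.
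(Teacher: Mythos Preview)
Your proposal is correct and takes a genuinely different route from the paper's proof.

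The paper establishes only $i \in I(\eta^+)$ (via Lemma~\ref{lemma:iota}(a), combining an $i^- \in I(\eta^-)$ and an $i^+ \in I(\eta^+)$ with $\max\{i^-,i^+\}=i$) and, symmetrically, $j+1 \in I(\eta)$ for $\eta$ slightly above $\eta^-$. From these two facts it extracts the functional inequalities $T_{i:k}^+ \ge \eta^+$ for $k>i$ and $T_{k:j}^- \le \eta^-$ for $k \le j$, and then the forward implication is obtained by two applications of the Cauchy mean value property (Lemma~\ref{lemma:functional_bounds}) to push these bounds to the subranges $\{\ell,\dots,k\}$ and $\{k,\dots,\ell-1\}$, verifying the criterion of Proposition~\ref{prop:immediate} directly for each $\eta$. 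In contrast, you first upgrade the endpoint information to $i, j+1 \in I(\eta)$ uniformly on $(\eta^-,\eta^+]$ using Lemma~\ref{lemma:iota}(b), which lets you collapse the ``for all $\eta$'' condition to the single membership $\ell \in I(\eta^+)$; the forward implication then becomes a one-line identification computation, and Lemma~\ref{lemma:functional_bounds} is never invoked. Your argument is slicker in the totally ordered setting, while the paper's functional-bounds approach is the one that carries over verbatim to the partial-order generalization in Proposition~\ref{prop:4.9}, where an analogue of the consecutive-indices sandwich is less immediate.
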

\begin{proof}
	We will first argue that $T_{i:k}^+ \ge \eta^+$ for $k > i$, and that $T_{k:j}^- \le \eta^-$ for $k < j$.
	The assumptions ensure that there are $i^- \le i$, $i^+\le i$ such that $i^- \in I(\eta^-)$, $i^+ \in I(\eta^+)$, and $\max\{i^-,i^+\} = i$. By Lemma \ref{lemma:iota} (a) we have $i \in I(\eta^+)$, and therefore $v_{i:k}(\eta^+)\le 0$ for all $k > i$ by Proposition \ref{prop:immediate}, hence $T_{i:k}^+ \ge \eta^+$ by Corollary \ref{cor:TviaV}.
	
	The assumptions also imply that there are $\tilde{\eta}^- > \eta^-$, $j^- \ge j$, $\tilde{\eta}^+ > \eta^+$, $j^+ \ge j$ such that $j^- + 1\in I(\eta)$ for all $\eta \in (\eta^-,\tilde{\eta}^-]$, $j^+ + 1\in I(\eta)$ for all $\eta \in (\eta^+,\tilde{\eta}^+]$, and $\min\{j^-, j^+\} = j$.
	By Lemma \ref{lemma:iota} (a) we have $j+1 \in I(\eta)$ for $\eta \in (\eta^-,\tilde{\eta}^-]$, and therefore $v_{k:j}(\eta)\ge 0$ for all $k \le j$, $\eta \in (\eta^-,\tilde{\eta}^-]$, hence $T_{k:j}^- \le \eta$.
	In summary, $T_{k:j}^- \le \eta^-$ for all $k \le j$.
	
	For the first part of the result, let $\ell \in \{i+1, \dots, j\}$ such that $T_{i:(\ell-1)}^- \le \eta^-$.
	By Lemma \ref{lemma:functional_bounds}, we have $T_{i:k}^+ \le \max\{ T_{i:(\ell-1)}^-, T_{\ell:k}^+ \}$ for all $k \ge \ell$.
	Since $T_{i:k}^+ \ge \eta^+$ and $T_{i:(\ell-1)}^- \le \eta^-$, we have $T_{\ell :k}^+ \ge \eta^+$ and $v_{\ell:k}(\eta) \le 0$ for all $\eta \le \eta^+$, $k \ge \ell$.
	Similarly, by Lemma \ref{lemma:functional_bounds}, we have $T_{k:j}^- \ge \min\{ T_{k:(\ell-1)}^-, T_{\ell:j}^+ \}$ for all $k \le \ell-1$.
	Since $T_{k:j}^- \le \eta^-$ and as shown above $T_{\ell:j}^+ \ge \eta^+$, we have $T_{k:(\ell-1)}^- \le \eta^-$ and $v_{k:(\ell-1)}(\eta) \ge 0$ for all $\eta > \eta^-$, $k \le \ell-1$.
	Hence, we have $\ell  \in I(\eta)$ for all $\eta \in (\eta^-, \eta^+]$.
	
	To prove the converse, note that $\ell \in I(\eta)$ for all $\eta \in (\eta^-, \eta^+]$ implies $v_{k:(\ell-1)}(\eta) \ge 0$ for all $\eta \in (\eta^-, \eta^+]$, $k < \ell$.
	Hence, in particular, $v_{i:(\ell-1)}(\eta) \ge 0$ and $T_{i:(\ell-1)}^- \le \eta$ for all $\eta \in (\eta^-, \eta^+]$, and, therefore, $T_{i:(\ell-1)}^- \le \eta^-$.
\end{proof}

\subsection{Pool-adjacent-violators algorithm}\label{sec:PAV}
As in Section \ref{sec:minmax}, the PAV algorithm takes observations $(z_1, y_1), \dots, (z_n, y_n)$, with $z_1 < \dots < z_n$ and can be generalized as detailed in Remark \ref{rem:3.1}.
Its starting point is the finest partition $\mathcal{Q}_0 = \{\{1\}, \dots, \{n\}\}$ of the index set $\{1, \dots, n\}$, and a corresponding function $g_0 \colon \{z_1, \dots, z_n\} \to \R$ satisfying
\[
g_0(z_\ell) \in T(P_{\ell:\ell}).
\]
If possible, an increasing function has to be chosen.
The algorithm iteratively considers pooling adjacent elements $Q_1$ and $Q_2$ in the current partition, where ``adjacent'' means that the largest element of $Q_1$, $Q_1^+ = \max Q_1$, is the predecessor (in terms of the natural numbers) of the smallest element of $Q_2$, $Q_2^- = \min Q_2$.
Pooling adjacent partition elements is considered necessary when $T_{Q_1^-:Q_1^+}^- > T_{Q_2^-:Q_2^+}^+$ (strong adjacent violators), it is considered invalid when $T_{Q_1^-:Q_1^+}^+ < T_{Q_2^-:Q_2^+}^-$, and optional otherwise (weak adjacent violators).
The early stopping criterion is the existence of an increasing function $g_\mathrm{PAV} \colon \{z_1, \dots, z_n\} \to \R$ that is constant on each element of the current partition $\mathcal{Q}_\mathrm{PAV}$ and satisfies
\begin{equation} \label{eq:PAV1}
g_\mathrm{PAV}(z_\ell) \in T(P_{Q^-:Q^+}) \quad \text{for all $Q \in \mathcal{Q}_\mathrm{PAV}$ and $\ell \in Q$},
\end{equation}
that is, when no further pooling is necessary.
The late stopping criterion is reached when no weak adjacent violators remain.
The first and most apparent property we observe is that for all $\ell \in \{1, \dots, n\}$, $Q_1, Q_2 \in \mathcal{Q}_\mathrm{PAV}$, $Q_1^- \le \ell \le Q_2^+$, we have
\begin{equation} \label{eq:PAV2}
T_{Q_1^-:Q_1^+}^- \le g_\mathrm{PAV}(z_\ell) \le T_{Q_2^-:Q_2^+}^+,
\end{equation}
since otherwise either $g_\mathrm{PAV}$ is not increasing or the condition \eqref{eq:PAV1} is violated.
Definition \ref{def:TviaV} and its Corollary \ref{cor:TviaV} allow for an immediate proof of an additional property of $\mathcal{Q}_\mathrm{PAV}$.
\begin{proposition} \label{prop:nec-pooling}
	Let $\mathcal{Q}$ be a partition of $\{1, \dots, n\}$ found by the PAV algorithm, $Q \in \mathcal{Q}$, and $j \in Q$.
	Then,
	\begin{equation*}
	T_{j:Q^+}^- \le T_{Q^-:Q^+}^- \le T_{Q^-:Q^+}^+ \le T_{Q^-:j}^+.
	\end{equation*} 
\end{proposition}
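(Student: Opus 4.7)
My plan is to proceed by induction on the number of pooling operations that the PAV algorithm performs in creating the block $Q$.  The middle inequality $T_{Q^-:Q^+}^- \le T_{Q^-:Q^+}^+$ is built into Definition~\ref{def:TviaV}, and the base case of a singleton $Q$ is immediate since all four quantities coincide.  For the inductive step I write $Q = A \cup B$, with $A$ and $B$ being the two consecutive blocks whose final pooling creates $Q$ (so $A^- = Q^-$, $B^+ = Q^+$, $A^+ + 1 = B^-$), and I assume the proposition for both $A$ and $B$.  Because PAV never pools a pair flagged as invalid, $T_A^+ \ge T_B^-$; the swapped form of Lemma~\ref{lemma:functional_bounds} then yields the chain $T_B^- \le T_Q^- \le T_Q^+ \le T_A^+$.

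Fix $j \in Q$; by the evident symmetry it suffices to treat $j \in A$.  The upper bound $T_{Q^-:j}^+ \ge T_Q^+$ follows immediately from $T_{Q^-:j}^+ = T_{A^-:j}^+ \ge T_A^+ \ge T_Q^+$, where the first inequality is the inductive hypothesis on $A$.  For the lower bound $T_{j:Q^+}^- \le T_Q^-$, the case $j = Q^-$ is trivial; otherwise I split $Q$ at $j$ as $(Q^-:j-1) \cup (j:Q^+)$ and apply Lemma~\ref{lemma:functional_bounds}, which reduces the claim to the non-invalidity inequality
\[
T_{Q^-:j-1}^+ \ge T_{j:Q^+}^-,
\]
since then the minimum appearing in the swapped form of the lemma collapses to $T_{j:Q^+}^-$.

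To establish this auxiliary inequality I invoke one extra bound not stated in the paper but immediate from Corollary~\ref{cor:TviaV}: if $V(\eta, P_1 + P_2) < 0$ then at least one summand must be strictly negative, forcing $\eta \le \max\{T_{P_1}^-, T_{P_2}^-\}$, and hence $T_{P_1 + P_2}^- \le \max\{T_{P_1}^-, T_{P_2}^-\}$.  Applied with $P_1 = P_{j:A^+}$ and $P_2 = P_B$, together with the inductive hypothesis $T_{j:A^+}^- \le T_A^-$, this gives $T_{j:Q^+}^- \le \max\{T_A^-, T_B^-\}$.  Since $T_A^+$ dominates $T_A^-$ trivially and dominates $T_B^-$ by PAV admissibility, the inductive hypothesis $T_{A^-:j-1}^+ \ge T_A^+$ closes the chain: $T_{Q^-:j-1}^+ \ge T_A^+ \ge T_{j:Q^+}^-$.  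The case $j \in B$ is handled symmetrically, using the dual bound $T_{P_1 + P_2}^+ \ge \min\{T_{P_1}^+, T_{P_2}^+\}$ together with the inductive hypothesis for $B$.

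The principal obstacle lies not in any individual computation but in recognizing that Lemma~\ref{lemma:functional_bounds} alone does not suffice: its bound on $T_{P+Q}^-$ pairs $T_P^-$ with $T_Q^+$, whereas the induction requires the strengthening in terms of $T_P^-$ and $T_Q^-$.  This strengthening is genuinely ``immediate'' from Corollary~\ref{cor:TviaV}, matching the framing in the text; once it is identified, the rest of the argument is just bookkeeping across the cases determined by whether $j$ lies in $A$ or in $B$.
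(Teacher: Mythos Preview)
Your proof is correct. The induction is carefully set up, the auxiliary bound $T_{P_1+P_2}^- \le \max\{T_{P_1}^-, T_{P_2}^-\}$ is indeed immediate from Corollary~\ref{cor:TviaV}, and the case analysis on $j \in A$ versus $j \in B$ closes cleanly (in particular, the use of the inductive hypothesis at the shifted index $j-1 \in A$ is legitimate once $j > Q^-$).

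Your route, however, is genuinely different from the paper's. The paper argues by contradiction directly at the level of the identification function: assuming $T_{Q^-:Q^+}^- < \eta < T_{j:Q^+}^-$, one gets $v_{Q^-:Q^+}(\eta) \ge 0 > v_{j:Q^+}(\eta)$, hence $v_{Q^-:(j-1)}(\eta) > 0$, which forces $T_{Q^-:(j-1)}^+ < \eta < T_{j:Q^+}^-$, i.e.\ the split of $Q$ at $j$ is ``invalid'' in the sense of the PAV rules. The paper then declares this a contradiction with how PAV forms its blocks. What this buys is brevity: no induction, no appeal to Lemma~\ref{lemma:functional_bounds}, no auxiliary sub-additivity estimate. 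What your approach buys is explicitness: the paper's last sentence (``$Q$ can be seen as the result of an invalid pooling of $\{Q^-,\dots,j-1\}$ and $\{j,\dots,Q^+\}$'') tacitly presumes that a PAV block can never admit an invalid internal split, and the clean way to justify that is precisely an induction on pooling steps of the kind you carry out. In effect, you have unpacked the inductive content that the paper's one-line conclusion leaves to the reader, at the cost of a longer argument and the need to introduce the strengthened bound on $T_{P_1+P_2}^-$.
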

\begin{proof}
	The second inequality is trivial.
	For the first inequality, suppose the contrary: There exist $\eta \in \R$, $j \in Q$ such that $T_{Q^-:Q^+}^- < \eta < T_{j:Q^+}^-$.
	This implies that $j > Q^-$ and $v_{Q^-:Q^+}(\eta) \ge 0 > v_{j:Q^+}(\eta)$, hence $v_{Q^-:(j - 1)}(\eta) > 0$.
	Therefore, $T_{Q^-:(j - 1)}^+ < \eta < T_{j:Q^+}^-$, which means that $Q$ can be seen as the result of an invalid pooling of $\{Q^-, \dots, j - 1\}$ and $\{j, \dots, Q^+\}$.
	A similar argument applies to the third inequality.
\end{proof}
To show the connection between a valid solution by the PAV algorithm and the score optimizing solution $\hat{g}$ in Section \ref{sec:minmax}, we define
\begin{equation} \label{eq:iotaPAV}
\iota_\mathrm{PAV}(\eta) = \min \{k : \eta \le g_\mathrm{PAV}(z_k)\}.
\end{equation}
Plugging $\iota_\mathrm{PAV}$ into the definition of $\hat{g}$ recovers $g_\mathrm{PAV}$,
\begin{align*}
\hat{g}(z_\ell) &= \max \{ \eta : \iota_\mathrm{PAV}(\eta) \le \ell\} \\
&= \max\{\eta : \eta \le g_\mathrm{PAV}(z_\ell) \} = g_\mathrm{PAV}(z_\ell).
\end{align*}
In order to show that $g_\mathrm{PAV}$ solves the isotonic regression problem, it remains to be shown that $\iota_\mathrm{PAV}(\eta) \in I(\eta)$ for all $\eta \in \R$.
\begin{proposition}
	Let $\eta \in \R$, then $\iota_\mathrm{PAV}(\eta) \in I(\eta)$.
\end{proposition}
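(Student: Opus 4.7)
The plan is to verify, via Proposition \ref{prop:immediate}, the two defining inequalities for membership of $i := \iota_\mathrm{PAV}(\eta)$ in $I(\eta)$: namely, $v_{i:(j-1)}(\eta) \le 0$ for every $j > i$ and $v_{j:(i-1)}(\eta) \ge 0$ for every $j < i$. First I would locate $i$ within the PAV partition. Since $g_\mathrm{PAV}$ is constant on each block of $\mathcal{Q}_\mathrm{PAV}$ and increasing across blocks, the minimum in \eqref{eq:iotaPAV} is attained at the left endpoint of a unique block: $i = Q^-$ for the first block $Q \in \mathcal{Q}_\mathrm{PAV}$ whose constant value satisfies $g_\mathrm{PAV}(z_{Q^-}) \ge \eta$. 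Consequently, $g_\mathrm{PAV}(z_{\widetilde Q^-}) < \eta$ on every block $\widetilde Q$ with $\widetilde Q^+ < i$, and $g_\mathrm{PAV}(z_{\widehat Q^-}) \ge \eta$ on every block $\widehat Q$ with $\widehat Q^- \ge i$.

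To establish $v_{i:(j-1)}(\eta) \le 0$ for $j > i$, I would write the range $\{i, \ldots, j-1\}$ as the disjoint union of its intersections with consecutive PAV blocks $\widehat Q^{(0)} = Q, \widehat Q^{(1)}, \ldots, \widehat Q^{(r)}$, so that $v_{i:(j-1)}(\eta)$ decomposes into full-block contributions plus a terminal truncated piece $v_{\widehat Q^{(r)-}:(j-1)}(\eta)$. For each full block $\widehat Q^{(k)}$, the condition $g_\mathrm{PAV}(z_{\widehat Q^{(k)-}}) \in T(P_{\widehat Q^{(k)-}:\widehat Q^{(k)+}})$ together with $g_\mathrm{PAV}(z_{\widehat Q^{(k)-}}) \ge \eta$ yields $\eta \le T^+_{\widehat Q^{(k)-}:\widehat Q^{(k)+}}$, and the contrapositive of Corollary \ref{cor:TviaV} gives the desired sign. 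For the truncated terminal piece, Proposition \ref{prop:nec-pooling} supplies $T^+_{\widehat Q^{(r)-}:(j-1)} \ge T^+_{\widehat Q^{(r)-}:\widehat Q^{(r)+}} \ge \eta$, so it too is $\le 0$. Summing the nonpositive pieces delivers $v_{i:(j-1)}(\eta) \le 0$.

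The argument for $v_{j:(i-1)}(\eta) \ge 0$ when $j < i$ is entirely symmetric. I would decompose $\{j, \ldots, i-1\}$ along preceding blocks $\widetilde Q^{(0)} \ni j, \widetilde Q^{(1)}, \ldots, \widetilde Q^{(s)}$, with a leading truncated piece $v_{j:\widetilde Q^{(0)+}}(\eta)$ and then full intermediate blocks. On each full block, $g_\mathrm{PAV}(z_{\widetilde Q^{(k)-}}) < \eta$ combined with $g_\mathrm{PAV}(z_{\widetilde Q^{(k)-}}) \ge T^-_{\widetilde Q^{(k)-}:\widetilde Q^{(k)+}}$ forces the strict bound $T^-_{\widetilde Q^{(k)-}:\widetilde Q^{(k)+}} < \eta$, and the contrapositive of Corollary \ref{cor:TviaV} (from $V(\eta, P) < 0$ to $\eta \le T^-$) gives $v \ge 0$. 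The leading piece is handled by Proposition \ref{prop:nec-pooling}'s inequality $T^-_{j:\widetilde Q^{(0)+}} \le T^-_{\widetilde Q^{(0)-}:\widetilde Q^{(0)+}} < \eta$. Summing yields $v_{j:(i-1)}(\eta) \ge 0$, completing the verification via Proposition \ref{prop:immediate}. The main subtlety I foresee is in preserving the strict inequality $T^- < \eta$ when translating $g_\mathrm{PAV} < \eta$ into a sign statement on $v$; this is exactly where the choice of the minimum in the definition of $\iota_\mathrm{PAV}$ becomes essential.
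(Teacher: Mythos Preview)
Your proof is correct and follows essentially the same approach as the paper: both verify the two sign conditions of Proposition \ref{prop:immediate} by combining \eqref{eq:PAV1}/\eqref{eq:PAV2} with Proposition \ref{prop:nec-pooling} and the contrapositives of Corollary \ref{cor:TviaV}. The paper compresses the argument into a single chain $T_{j:(i-1)}^- \le g_\mathrm{PAV}(z_{i-1}) < \eta \le g_\mathrm{PAV}(z_i) \le T_{i:(k-1)}^+$, whereas you make the underlying block decomposition explicit; the content is the same.
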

\begin{proof}
	Let $\eta \in \R$.
	We combine Proposition \ref{prop:nec-pooling}, the statement \eqref{eq:PAV2}, and the defining equation \eqref{eq:iotaPAV}.
	As a result, for all $j, k \in \{1, \dots, n + 1\}$, $j < \iota_\mathrm{PAV}(\eta) < k$, we have $T_{j:(\iota_\mathrm{PAV}(\eta) - 1)}^- \le g_\mathrm{PAV}(z_{\iota_\mathrm{PAV}(\eta) - 1}) < \eta \le g_\mathrm{PAV}(z_{\iota_\mathrm{PAV}(\eta)}) \le T_{\iota_\mathrm{PAV}(\eta):(k - 1)}^+$, hence $v_{j:(\iota_\mathrm{PAV}(\eta) - 1)}(\eta) \ge 0 \ge v_{\iota_\mathrm{PAV}(\eta):(k - 1)}(\eta)$.
	The statement follows from Proposition \ref{prop:immediate}.
\end{proof}

As a closing side note, we point out that $\iota_\mathrm{PAV}$ corresponds to coarsest partition that allows the solution $g_\mathrm{PAV}$.
Any weak adjacent violators on which $g_\mathrm{PAV}$ takes the same value have been pooled.

\section{Generalization to partial orders} \label{sec:partial}

In the first part of this paper, we considered a series of observations $(z_\ell, y_{\ell})$, where $\ell =1, \dots, n$ and the set $\{z_1, \dots, z_n\}$ was totally ordered.
In this section, we solve the isotonic regression problem considering a distribution $P$ for a random vector $(Z, Y) \in \mathcal{Z} \times \R$, where $\mathcal{Z}$ is a finite partially ordered set.
Analogously to \eqref{eq:3}, we aim to minimize the criterion
\begin{equation}\label{eq:3A}
\E_{P} S_\eta (g(Z),Y) \quad \text{for all } \eta \in \R,
\end{equation}
over all increasing functions $g\colon \mathcal{Z} \to \bar\R$.
We call any minimizer of \eqref{eq:3A} a solution to the isotonic regression problem.

The considerations in Section \ref{sec:minmax} lead to the reformulation of the optimization problem at \eqref{eq:3} as the minimization of
\[
s_{i}(\eta) = v_{i:n}(\eta) = \sum_{\ell = i}^n V(\eta, y_\ell)
\]
over all $i \in \{1, \dots, n+1\}$.
The dependence on $z_1, \dots, z_n$ and $\hat{g}$ seemingly vanishes, but remains encoded in the index set $\{1, \dots, n+1\}$ and in the link to $\eta$ via an optimizing function $\iota \colon \R \to \{1, \dots, n+1\}$ such that
\[
\{z : \hat{g}(z) \ge \eta\} = \{z_{\iota(\eta)}, \dots, z_n \}.
\]
In the second part, we now generalize the index set $\{1, \dots, n+1\}$ and the function $\iota$ in order to accommodate a partially ordered set $\mathcal{Z}$.

We introduce upper sets $x \subseteq \mathcal{Z}$ to replace single indices $i \in \{1, \dots, n + 1\}$.
We consider a set $\mathcal{X} \subseteq \mathcal{P}(\mathcal{Z})$, where $\mathcal{P}$ denotes the power set.
The set $\mathcal{X}$ consists of all admissible superlevel sets for an increasing function $g$ imposed by the partial order on $\mathcal{Z}$.
A set $x \in \mathcal{X}$ is characterized by the property that if $z \in x$ and $z \preceq z'$, then $z' \in x$.
This implies that $\mathcal{X}$ is a finite lattice, that is, it is closed under union and intersection and contains $\mathcal{Z}$ and the empty set.

Consequently, we replace the function $\iota \colon \R \to \{1, \dots, n+1\}$ with a function $\xi \colon \R \to \mathcal{X}$, that maps $\eta$ to an upper set $x$ of $\mathcal{Z}$ that minimizes
\begin{equation}\label{eq:generalmin}
s_x(\eta) = v_x(\eta) = V(\eta, P_x) = \int_{x \times \R} V(\eta, y)\, P(\mathrm{d}z, \mathrm{d}y),
\end{equation}
where $P_x(A) = P((x \times \R) \cap A)$ for any $A \in \mathcal{P}(\mathcal{Z}) \otimes \mathcal{B}(\R)$, where $\mathcal{B}(\R)$ denotes the Borel $\sigma$-algebra on $\R$.
In this notation, $s_x$ is only defined for $x \in \mathcal{X}$, whereas $v_x$ and $P_x$ are defined for any $x \in \mathcal{P}(\mathcal{Z})$.
Let $X(\eta)$ denote the set of superlevel sets $x \in \mathcal{X}$ minimizing $s_x(\eta)$ at \eqref{eq:generalmin}.
Since $\mathcal{P}(\mathcal{Z})$ is finite, such a minimizer always exists.

The following proposition is the analogue to Proposition \ref{prop:immediate} and gives necessary and sufficient conditions for the inclusion $x \in X(\eta)$.
\begin{proposition}\label{prop:4.1}
	Let $\eta \in \R$.
	Subject to $x,x' \in \mathcal{X}$, the inclusion $x \in X(\eta)$ holds if and only if
	\begin{align*}
	&v_{x\setminus x'} (\eta) \leq 0 
	\quad \text{for all } x' \subsetneq x, \\&
	v_{x' \setminus x}(\eta) \geq 0 
	\quad \text{for all } x' \supsetneq x.
	\end{align*}
	Let $x \in X(\eta)$, $x' \in \mathcal{X}$.
	If $v_{x\setminus x'}(\eta)= v_{x' \setminus x}(\eta)$, then $x' \in X(\eta)$.
\end{proposition}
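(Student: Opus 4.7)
The plan hinges on a single additive identity. For any $x,x'\subseteq \mathcal{Z}$, decomposing $x = (x\cap x') \sqcup (x\setminus x')$ and $x' = (x\cap x') \sqcup (x'\setminus x)$ as disjoint unions and invoking additivity of $v_\cdot(\eta)$ in its set argument, I would first establish
\[
v_x(\eta) - v_{x'}(\eta) = v_{x\setminus x'}(\eta) - v_{x'\setminus x}(\eta).
\]
All three assertions will then follow from this identity together with the fact that $\mathcal{X}$ is a lattice.

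For the necessity direction of the biconditional, I assume $x \in X(\eta)$ and test against any $x' \in \mathcal{X}$ with $x' \subsetneq x$: since $x' \setminus x = \emptyset$, the identity collapses to $s_x(\eta) - s_{x'}(\eta) = v_{x\setminus x'}(\eta)$, and optimality of $x$ forces this quantity to be nonpositive. The case $x' \supsetneq x$ is symmetric.

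Sufficiency is where the lattice structure enters, and this is the step I expect to be the main obstacle compared to the total-order analogue in Proposition \ref{prop:immediate}: given an arbitrary $x'' \in \mathcal{X}$, $x$ and $x''$ need not be comparable, so the hypotheses cannot be applied to $x''$ directly. The trick will be to test the hypotheses against the lattice bounds $x \cap x''$ and $x \cup x''$, both of which lie in $\mathcal{X}$ by closure under intersection and union and which satisfy $x \cap x'' \subseteq x \subseteq x \cup x''$. Applying the first hypothesis to $x \cap x''$ yields $v_{x \setminus x''}(\eta) = v_{x \setminus (x \cap x'')}(\eta) \leq 0$, and applying the second to $x \cup x''$ yields $v_{x'' \setminus x}(\eta) = v_{(x \cup x'') \setminus x}(\eta) \geq 0$. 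The degenerate cases $x \subseteq x''$ or $x'' \subseteq x$ fall in automatically, since then one of the set-differences is empty, so no case split is needed. Plugging these two bounds into the identity yields $s_{x''}(\eta) - s_x(\eta) \geq 0$, proving $x \in X(\eta)$.

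For the final assertion, the hypothesis $v_{x\setminus x'}(\eta) = v_{x' \setminus x}(\eta)$ fed into the same identity immediately gives $s_{x'}(\eta) = s_x(\eta)$; since $x \in X(\eta)$ and $x' \in \mathcal{X}$, this forces $x' \in X(\eta)$. Outside of invoking lattice closure of $\mathcal{X}$, which is the one place where the generalization from totally ordered to partially ordered covariates becomes visible, all remaining steps are routine set-algebraic bookkeeping.
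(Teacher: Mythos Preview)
Your proof is correct and follows essentially the same route as the paper's: both reduce the comparison with an arbitrary $x'' \in \mathcal{X}$ to comparisons with the lattice elements $x \cap x''$ and $x \cup x''$, and both exploit the same additive decomposition of $s_\cdot(\eta)$. Your packaging via the single identity $v_x(\eta) - v_{x'}(\eta) = v_{x\setminus x'}(\eta) - v_{x'\setminus x}(\eta)$ is slightly more streamlined than the paper's chain $s_x(\eta) \le s_{x \cap x'}(\eta) \le s_{x'}(\eta)$, but the substance is the same.
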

\begin{proof}
	Note that $s_x(\eta) \leq s_{x'}(\eta)$ for all $x' \subsetneq x$ and all $x' \supsetneq x$ holds if and only if $v_{x\setminus x'}(\eta) \leq 0$ for all $x' \subsetneq x$ and $v_{x' \setminus x} (\eta) \geq 0$ for all $x' \supsetneq x$.
	For the first part of the result, note that $x\in X(\eta)$ implies $s_x(\eta) \leq s_{x'}(\eta)$ for all $x' \subsetneq x$ and all $x' \supsetneq x$.
	Conversely, let $x \in \mathcal{X}$ be such that the latter condition is satisfied.
	Then, $s_x(\eta) \leq s_{x' \cap x}(\eta)$ and $s_x(\eta) \leq s_{x' \cup x}(\eta)$ for all $x' \in \mathcal{X}$.
	By substracting $v_{x\setminus x'}(\eta)$ on both sides of the latter inequality, we have $s_{x \cap x'}(\eta) \leq s_{x'}(\eta)$ for all $x'\in \mathcal{X}$, and hence $x \in X(\eta)$.
	The second part of the result is immediate after adding $s_{x \cap x'}(\eta)$ to both sides of $v_{x\setminus x'}(\eta)= v_{x' \setminus x}(\eta)$, that is, $s_x(\eta)=s_{x'}(\eta)$.
\end{proof}

\begin{corollary}\label{cor:4.1}
	Let $\eta \in \R$ and $x \in X(\eta)$, $x' \in \mathcal{X}$.
	If $x' \subsetneq x$ and $v_{x \setminus x'}(\eta)=0$, then $x' \in X(\eta)$.
	Analogously, if $x' \supsetneq x$ and $v_{x' \setminus x}(\eta) = 0$, then $x' \in X(\eta)$.
\end{corollary}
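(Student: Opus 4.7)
The plan is to derive this corollary directly from the second part of Proposition~\ref{prop:4.1}, which states that if $v_{x\setminus x'}(\eta) = v_{x'\setminus x}(\eta)$ for some $x \in X(\eta)$ and $x' \in \mathcal{X}$, then $x' \in X(\eta)$. The key observation is that strict inclusion collapses one of the two set differences to the empty set.

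For the first claim, I would argue as follows. Assume $x' \subsetneq x$. Then $x' \setminus x = \emptyset$, and from the definition at \eqref{eq:generalmin} the integrand is integrated over an empty region, so $v_{x' \setminus x}(\eta) = 0$. By hypothesis $v_{x \setminus x'}(\eta) = 0$ as well, and therefore $v_{x \setminus x'}(\eta) = v_{x' \setminus x}(\eta)$. Applying the second part of Proposition~\ref{prop:4.1} with this $x'$ yields $x' \in X(\eta)$. Note that membership of $x'$ in $\mathcal{X}$ is part of the hypothesis, so admissibility of the superlevel set is not an issue.

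For the second claim, I would simply mirror the argument with the roles of $x$ and $x'$ reversed: $x' \supsetneq x$ forces $x \setminus x' = \emptyset$, hence $v_{x \setminus x'}(\eta) = 0 = v_{x' \setminus x}(\eta)$, and Proposition~\ref{prop:4.1} again delivers the conclusion.

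There is essentially no obstacle here; the corollary is a one-line specialization of Proposition~\ref{prop:4.1}, and the only thing worth flagging explicitly is that $v_{\emptyset}(\eta) = 0$ since $P_\emptyset$ is the null measure. The statement is recorded separately from the proposition because it is the form in which the result will be invoked in later sections, where one typically moves between nested superlevel sets rather than arbitrary comparisons.
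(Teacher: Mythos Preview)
Your argument is correct and is exactly the intended one: the paper states Corollary~\ref{cor:4.1} without proof precisely because it is the immediate specialization of the second part of Proposition~\ref{prop:4.1} that you describe, using $v_\emptyset(\eta)=0$.
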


\begin{lemma} \label{lem:4.1}
	\begin{enumerate}[label = (\alph*)]
		\item
		Let $\eta, \eta' \in \R$, $\eta < \eta'$, and $x \in X(\eta)$, $x' \in X(\eta')$. Then, $v_{x' \setminus x}(\eta'') = 0$ for all $\eta'' \in [\eta, \eta']$.
		
		\item
		Let $\eta \in \R$ and $x', x'' \in X(\eta)$, $x \in \mathcal{X}$. If $x \in \bigcup_{\eta \in \R} X(\eta)$ and $x' \supseteq x \supseteq x''$, then $x \in X(\eta)$.
		
		\item
		Let $\eta, \eta' \in \R$, $\eta < \eta'$, and $x \in X(\eta)$, $x' \in X(\eta')$.
		Then, $x \cup x' \in X(\eta)$ and $x \cap x' \in X(\eta')$.
	\end{enumerate}
\end{lemma}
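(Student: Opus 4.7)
The plan is to prove part (a) first as the core technical result, and then derive (b) and (c) by short applications of Corollary \ref{cor:4.1}. The underlying observation is that, by the lattice structure of $\mathcal{X}$, both $x \cup x'$ and $x \cap x'$ are themselves admissible superlevel sets, hence legitimate competitors in the optimization defining $X(\cdot)$.

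For (a), I would apply Proposition \ref{prop:4.1} twice. From $x \in X(\eta)$ and the admissible superset $x \cup x' \supseteq x$, the proposition gives $v_{(x \cup x') \setminus x}(\eta) \geq 0$, which simplifies to $v_{x' \setminus x}(\eta) \geq 0$ since $(x \cup x') \setminus x = x' \setminus x$. Symmetrically, from $x' \in X(\eta')$ and the admissible subset $x \cap x' \subseteq x'$, we obtain $v_{x' \setminus (x \cap x')}(\eta') \leq 0$, that is, $v_{x' \setminus x}(\eta') \leq 0$. Because $V(\cdot, y)$ is increasing for every $y$, so is $\eta'' \mapsto v_{x' \setminus x}(\eta'')$, and the chain $0 \leq v_{x' \setminus x}(\eta) \leq v_{x' \setminus x}(\eta') \leq 0$ collapses to equality at the endpoints; a squeeze then forces $v_{x' \setminus x}(\eta'') = 0$ throughout $[\eta, \eta']$.

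For (b), I would pick $\eta^* \in \R$ with $x \in X(\eta^*)$; the case $\eta^* = \eta$ is immediate. If $\eta^* < \eta$, part (a) applied to $x \in X(\eta^*)$ and $x' \in X(\eta)$ (using $x' \supseteq x$) yields $v_{x' \setminus x}(\eta) = 0$, and then Corollary \ref{cor:4.1} with base $x' \in X(\eta)$ and the subset $x$ delivers $x \in X(\eta)$; the case $\eta^* > \eta$ is handled symmetrically via $x'' \in X(\eta)$ with $x'' \subseteq x$. For (c), part (a) gives $v_{x' \setminus x}(\eta) = v_{x' \setminus x}(\eta') = 0$. The identity $(x \cup x') \setminus x = x' \setminus x$ combined with $x \in X(\eta)$ and Corollary \ref{cor:4.1} yields $x \cup x' \in X(\eta)$; symmetrically, $x' \setminus (x \cap x') = x' \setminus x$ together with $x' \in X(\eta')$ gives $x \cap x' \in X(\eta')$. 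The degenerate cases, when $x$ and $x'$ are comparable, are trivial.

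The main obstacle is part (a); once it is settled, (b) and (c) are routine. The one subtlety, relative to the totally ordered case of Lemma \ref{lemma:iota}, is that the right comparison sets are no longer adjacent indices but the join $x \cup x'$ and meet $x \cap x'$, and what makes these permissible is precisely the closure of $\mathcal{X}$ under union and intersection.
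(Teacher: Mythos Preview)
Your proposal is correct and follows essentially the same approach as the paper: part (a) via Proposition \ref{prop:4.1} applied to the join and meet together with monotonicity of $V$, and parts (b) and (c) via part (a) combined with Corollary \ref{cor:4.1}. The only minor point is that Proposition \ref{prop:4.1} is stated for \emph{strict} inclusions, so the edge case $x' \setminus x = \emptyset$ in (a) should be disposed of up front (as the paper does) rather than deferred to the closing remark about comparable sets; otherwise the arguments coincide.
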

\begin{proof}
	\begin{enumerate}[label = (\alph*)]
		\item
		We have $(x \cup x') \setminus x = x' \setminus x = x' \setminus (x \cap x')$.
		The statement is trivial if $x' \setminus x = \emptyset$.
		Otherwise, $v_{x' \setminus x}(\eta) \ge 0 \ge v_{x' \setminus x}(\eta')$ by Proposition \ref{prop:4.1}, where the statement follows from the monotonicity of the identification function in its first argument.
		\item 
		The statement is trivial if $x = x'$, $x = x''$, or $x \notin X(\eta')$ for all $\eta' \neq \eta$.
		Therefore, assume $x \in X(\eta')$, $\eta' \neq \eta$.
		If $\eta < \eta'$, then $v_{x \setminus x''}(\eta) = 0$ by part (a).
		If $\eta' < \eta$, then $v_{x' \setminus x}(\eta) = 0$ by part (a).
		In either case, $x \in X(\eta)$ by Corollary \ref{cor:4.1}.
		\item
		We have $s_x(\eta) \le s_{x \cup x'}(\eta)$ and $s_{x'}(\eta') \le s_{x \cap x'}(\eta')$, and $v_{x' \setminus x}(\eta'') = 0$ for all $\eta'' \in [\eta, \eta']$ by part (a).
		That means, $s_x(\eta) = s_{x \cup x'}(\eta)$ and $s_{x'}(\eta') = s_{x \cap x'}(\eta')$.
	\end{enumerate}
\end{proof}

\begin{example}
	In the case $\mathcal{Z} = \{z_1, \dots, z_n\}$, we choose $\mathcal{X}$ as the image of  $\{1, \dots, n+1\}$ under the one-to-one mapping
	\begin{align*}
	i &\mapsto \{ z : z \ge z_i \},
	\end{align*}
	implying a total order on $\mathcal{Z}$.
	In combination with the function $\xi \colon \R \to \mathcal{X}$ defined by
	\begin{equation}\label{eq:corr}
	\xi(\eta) = \{ z: z \ge z_{\iota(\eta)} \},
	\end{equation}
	we can embed the results from Section \ref{sec:total} into the more general setting of a partial order on the covariates.
\end{example}

Equation \eqref{eq:corr} demonstrates that instead of an increasing function $\iota \colon \R \to \{1, \dots, n+1\}$ such that $\iota(\eta) \in I(\eta)$ for all $\eta \in \R$, we are now interested in a decreasing function $\xi \colon \R \to \mathcal{X}$ in the sense that for $\eta'>\eta$ it holds that $\xi(\eta')\subseteq \xi(\eta)$.
Furthermore, $\xi(\eta) \in X(\eta)$ should hold for all $\eta \in \R$.
In Proposition \ref{prop:generalized_2}, we show that the functions $\xi$ are in one-to-one correspondence to the solutions $\hat{g}$ of the isotonic regression problem at \eqref{eq:3A}, and in Lemma \ref{lemma:xi} we show existence of such a function $\xi$.

The following proposition is analogous to Proposition \ref{prop:simplified_2} and allows to recover $\hat{g}$ from $\xi$.
\begin{proposition} \label{prop:generalized_2}
	Let $\xi \colon \R \to \mathcal{X}$ be a decreasing, left-continuous function such that $\xi(\eta) \in X(\eta)$, where left-continuity means that if $\eta_n \uparrow \eta$ and $z \in \xi(\eta_n)$, then $z \in \xi(\eta)$.
	Then, the function $\hat{g} \colon \mathcal{Z} \to \R$ given by
	\begin{equation}\label{eq:hatg2}
	\inf\{\eta : z \notin \xi(\eta)\}
	= \hat{g}(z)
	= \max\{\eta: z \in \xi(\eta)\}
	\end{equation}
	is the unique function that satisfies
	\[
	\{z : g(z) \geq \eta\} = \xi(\eta) \quad \text{ for all } \eta \in \R,
	\]
	among all increasing functions $g \colon \mathcal{Z} \to \R$.
\end{proposition}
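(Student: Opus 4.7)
The plan is to mirror the proof of Proposition \ref{prop:simplified_2}, replacing the totally ordered index set by the lattice $\mathcal{X}$ of upper sets, and using the hypothesized left-continuity of $\xi$ exactly where left-continuity of $\iota$ was used before.

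First I would establish that the inner and outer expressions defining $\hat{g}(z)$ coincide. The set $A_z = \{\eta : z \in \xi(\eta)\}$ is a left-unbounded interval in $\R$ because $\xi$ is decreasing: if $\eta \in A_z$ and $\eta' < \eta$ then $\xi(\eta) \subseteq \xi(\eta')$, so $z \in \xi(\eta')$. The hypothesized left-continuity (if $\eta_n \uparrow \eta$ and $z \in \xi(\eta_n)$ then $z \in \xi(\eta)$) guarantees that $A_z$ is closed on the right, hence $\max A_z = \sup A_z = \inf A_z^c$, which gives the equality $\max\{\eta : z \in \xi(\eta)\} = \inf\{\eta : z \notin \xi(\eta)\}$. (As in the totally ordered case, the boundary behavior $\xi(\eta) = \mathcal{Z}$ for $\eta$ small enough and $\xi(\eta) = \emptyset$ for $\eta$ large enough ensures both quantities are finite.)

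Next I would verify that $\hat{g}$ is increasing and has the correct superlevel sets. Monotonicity is immediate: if $z \preceq z'$ then since each $\xi(\eta) \in \mathcal{X}$ is an upper set, $z \in \xi(\eta)$ implies $z' \in \xi(\eta)$, so $A_z \subseteq A_{z'}$ and the maxima satisfy $\hat{g}(z) \le \hat{g}(z')$. For the superlevel-set identity, I would show the two inclusions: (i) if $z \in \xi(\eta')$, then $\eta' \in A_z$, so $\hat{g}(z) \ge \eta'$; (ii) if $\hat{g}(z) \ge \eta'$, then by the closure of $A_z$ there exists some $\eta \ge \eta'$ with $z \in \xi(\eta)$, and since $\xi$ is decreasing, $\xi(\eta) \subseteq \xi(\eta')$, so $z \in \xi(\eta')$.

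For uniqueness, I would observe that any increasing $\bar{g} \colon \mathcal{Z} \to \R$ satisfying $\{z : \bar{g}(z) \ge \eta\} = \xi(\eta)$ for every $\eta \in \R$ must obey $\bar{g}(z) \ge \eta \iff z \in \xi(\eta) \iff \hat{g}(z) \ge \eta$, so $\{\eta : \bar{g}(z) \ge \eta\} = \{\eta : \hat{g}(z) \ge \eta\}$, and since these sets determine a real number via their supremum, $\bar{g}(z) = \hat{g}(z)$ for every $z \in \mathcal{Z}$. The only real subtlety is the left-continuity argument in step one; everything else is a transcription of Proposition \ref{prop:simplified_2} with indices replaced by upper sets, which is a routine adaptation once the lattice structure of $\mathcal{X}$ and the upper-set property have been invoked.
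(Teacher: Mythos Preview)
Your proposal is correct and follows essentially the same route as the paper: you use monotonicity and left-continuity of $\xi$ to obtain \eqref{eq:hatg2}, the upper-set property for monotonicity of $\hat g$, the two inclusions (your (i) and (ii) are the paper's (ii) and (i)) for the superlevel-set identity, and the same uniqueness argument phrased directly rather than by contradiction. The only cosmetic difference is that you spell out the interval structure of $A_z$ more explicitly than the paper does.
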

\begin{proof}
	The left-continuity and monotonicity of $\xi \colon \R \to \mathcal{X}$ implies equation \eqref{eq:hatg2}.
	The monotonicity of $\hat{g}$ follows from the monotonicity of $\xi$ and the fact that $\xi$ takes values being superlevel sets of the partial order on $\mathcal{Z}$.
	Let $\eta' \in \R$.
	Then,
	\begin{align*}
	\text{(i)}  &\quad \hat{g}(z) \geq \eta' \implies
	\xi(\hat{g}(z)) \subseteq \xi(\eta') \implies
	z \in \xi(\eta').
	\\
	\text{(ii)} &\quad \text{For any } z \in \xi(\eta'): \quad \hat{g}(z) = \max \{\eta : z \in \xi(\eta)\} \ge \eta'.
	\end{align*} 
	Therefore, $\{z : \hat{g}(z) \ge \eta'\} \subseteq \{z : z \in \xi(\eta')\} \subseteq \{z : \hat{g}(z) \ge \eta'\}$ where the first inclusion follows by (i) and the second by (ii).
	Uniqueness follows because any hypothetical alternative $\bar{g}$ with $\bar{g}(z') \neq \hat{g}(z')$ for some $z' \in \mathcal{Z}$ leads to the contradiction $\xi(\eta) = \{z : \bar{g}(z) \geq \eta\} \neq \{z : \hat{g}(z) \geq \eta\} = \xi(\eta)$ for all $\eta$ between $\bar{g}(z')$ and $\hat{g}(z')$.
\end{proof}

The following lemma guarantees the existence of a function $\xi$ as specified in Proposition \ref{prop:generalized_2}.
\begin{lemma} \label{lemma:xi}
	\begin{enumerate}[label = (\alph*)]
		\item
		There exists a decreasing function $\xi \colon \mathbb{Q} \to \mathcal{X}$ such that $\xi(q) \in X(q)$ for all $q \in \mathbb{Q}$.
		\item
		Let $\eta_n \uparrow \eta$ and $x_n \in X(\eta_n)$, $x_n \supseteq x_{n + 1}$.
		Then, $x = \bigcap_{n \in \mathbb{N}} x_n \in X(\eta)$.
	\end{enumerate}
\end{lemma}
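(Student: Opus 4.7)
The plan is to handle the two parts separately, starting with the existence result in (a) via an inductive selection procedure, and then using the finiteness of $\mathcal{X}$ together with left-continuity to handle the limit in (b).

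For (a), I would enumerate $\mathbb{Q} = \{q_1, q_2, \dots\}$ and construct $\xi(q_n)$ by induction on $n$, maintaining the invariant that $\xi$ is decreasing (in the real-order sense on rationals) on the finite set defined so far and $\xi(q_k) \in X(q_k)$ for each $k \le n$. At the inductive step, let $q_i$ be the largest of $q_1,\dots,q_n$ with $q_i \le q_{n+1}$ (if any) and $q_j$ the smallest with $q_j > q_{n+1}$ (if any), pick an arbitrary $y \in X(q_{n+1})$, and define
\[
\xi(q_{n+1}) = \xi(q_i) \cap \bigl(y \cup \xi(q_j)\bigr),
\]
with the obvious modifications $\xi(q_{n+1}) = \xi(q_i) \cap y$ or $\xi(q_{n+1}) = y \cup \xi(q_j)$ when one of $q_i$, $q_j$ is absent. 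The key tool is Lemma \ref{lem:4.1}(c): applied first to $(\eta,\eta') = (q_{n+1}, q_j)$ it gives $y \cup \xi(q_j) \in X(q_{n+1})$, and applied again to $(\eta,\eta') = (q_i, q_{n+1})$ it yields $\xi(q_i) \cap (y \cup \xi(q_j)) \in X(q_{n+1})$. The sandwiching $\xi(q_j) \subseteq \xi(q_{n+1}) \subseteq \xi(q_i)$ uses only $\xi(q_j) \subseteq \xi(q_i)$ from the inductive hypothesis, and this extends monotonicity to all previously defined indices because $q_i$ and $q_j$ are by construction the tightest such constraints.

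For (b), the crucial observation is that $\mathcal{X}$ is finite, so the decreasing sequence $(x_n)$ must stabilize: there exists $N$ with $x_n = x_N$ for all $n \ge N$, giving $x = \bigcap_n x_n = x_N$ and $x_N \in X(\eta_n)$ for every $n \ge N$. I would then apply Proposition \ref{prop:4.1} to get, for every $x' \in \mathcal{X}$,
\[
v_{x_N \setminus x'}(\eta_n) \le 0 \text{ if } x' \subsetneq x_N, \qquad v_{x' \setminus x_N}(\eta_n) \ge 0 \text{ if } x' \supsetneq x_N.
\]
Since $V(\cdot, y)$ is left-continuous and increasing, so is $v_A(\cdot)$ for each finite $A$, hence $v_A(\eta_n) \uparrow v_A(\eta)$ as $\eta_n \uparrow \eta$. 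The two displayed inequalities therefore pass to the limit at $\eta$, and the converse direction of Proposition \ref{prop:4.1} gives $x = x_N \in X(\eta)$.

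The main obstacle is the bookkeeping in part (a): one must verify that the local construction (using only the immediate neighbors $q_i$, $q_j$) does not violate monotonicity against any earlier $q_k$. This follows automatically from the maximality/minimality of the chosen neighbors, but it is worth spelling out. Part (b) is essentially mechanical once one notices the stabilization, which short-circuits any delicate intersection argument over an infinite family.
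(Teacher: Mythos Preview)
Your argument is correct. For part~(a) your inductive construction is essentially the paper's: the paper also enumerates $\mathbb{Q}$ and at each step sandwiches an arbitrary element of $X(q_n)$ between previously defined values via union and intersection, the only cosmetic difference being that it takes the union and intersection over \emph{all} earlier indices rather than just the immediate neighbors $q_i,q_j$, and invokes Lemma~\ref{lem:4.1}(a) (in effect the same content leading to~(c)) for the membership $\xi(q_n)\in X(q_n)$. Your version, carrying monotonicity explicitly in the inductive hypothesis and citing Lemma~\ref{lem:4.1}(c) directly, is a tidier packaging of the same idea.

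For part~(b) your route is genuinely simpler than the paper's. The paper argues via dominated convergence that $s_{x_n}(\eta_n)\to s_x(\eta)$ and $s_{x'}(\eta_n)\to s_{x'}(\eta)$, passing the minimality inequality to the limit without ever invoking finiteness of $\mathcal{X}$. Your observation that a decreasing sequence in the finite lattice $\mathcal{X}$ must stabilize short-circuits this entirely: once $x_n=x_N$ eventually, left-continuity of each $v_A(\cdot)$ (a finite sum of left-continuous increasing terms) lets the Proposition~\ref{prop:4.1} inequalities pass to $\eta$, and the converse direction of that proposition closes the argument. The paper's proof would survive in an infinite-lattice setting, but since $\mathcal{Z}$ is assumed finite here, your stabilization argument is both valid and more transparent.
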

\begin{proof}	
	\begin{enumerate}[label=(\alph*)]
		\item Let $\{q_n\} = \mathbb{Q}$ be an enumeration of the rationals.
		We define $\xi(q_n)$ inductively.
		Pick $x_1 \in X(q_1)$ and set $\xi(q_1) = x_1$.
		For $n \ge 2$, define 
		\[
		x_n^- = \bigcup_{\substack{i\in\{1,\dots,n-1\}\\q_i > q_n}} \xi(q_i), \quad x_n^+ = \bigcap_{\substack{i\in\{1,\dots,n-1\}\\q_i < q_n}} \xi(q_i),
		\] 
		if $\{i : q_i > q_n\} \not= \emptyset$ and $\{i : q_i < q_n\} \not= \emptyset$.
		If $\{i : q_i > q_n\} = \emptyset$, we set $x_n^- = \emptyset$, and if $\{i : q_i < q_n\} = \emptyset$, we set $x_n^+ = \mathcal{Z}$.
		We choose any $x_n \in X(q_n)$ and set $\xi(q_n) = (x_n \cup x_n^-) \cap x_n^+$.
		At each step $n$, $\xi(q_n) \in X(q_n)$ follows by \ref{lem:4.1} (a), and $\xi(q_n) \subseteq x_n^+$.
		Furthermore, we show by induction that $x_n^- \subseteq \xi(q_n)$ for all $n$.
		For $n=2$, this is easily verified.
		Suppose the claim holds for $n-1 \ge 2$.
		If $q_{n}> q_{n-1}$, then $x_n^- = x_{n-1}^-$ and $x_n^+ = x_{n-1}^+ \cap \xi(q_{n-1})=\xi(q_{n-1})$, hence
		\[
		x_n^- = x_{n-1}^- \subseteq (x_n \cup x_{n-1}^-) \cap \xi(q_{n-1}) = \xi(q_n).
		\]  
		If $q_n < q_{n-1}$, then $x_n^- = x_{n-1}^- \cup \xi(q_{n-1})=\xi(q_{n-1})$ and $x_n^+ = x_{n-1}^+$, hence
		\[
		x_n^- = \xi(q_{n-1}) \subseteq (x_n \cup \xi(q_{n-1})) \cap x_{n-1}^+ = \xi(q_n).
		\] 
		In summary, for $k < n$, if $q_k < q_n$, then $\xi(q_n) \subseteq x_n^+ \subseteq \xi(q_k)$, and  if $q_k > q_n$, $\xi(q_k) \subseteq x_n^- \subseteq \xi(q_n)$ showing that $\xi$ is decreasing.
		\item
		We have $s_{x_n}(\eta_n) \le s_{x'}(\eta_n)$ for all $x' \in \mathcal{X}$.
		Furthermore, the definitions of $x$ and $V$ imply $\one\{z \in x_n\}V(\eta_n, y) \rightarrow \one\{z \in x\}V(\eta, y)$ pointwise, and we have $\one\{z \in x_n\}V(\eta_n, y) \le \sup_{n \in \mathbb{N}} |V(\eta_n, y)|$.
		By the dominated convergence theorem, $s_{x_n}(\eta_n) \rightarrow s_x(\eta)$ and $s_{x'}(\eta_n) \rightarrow s_{x'}(\eta)$.
	\end{enumerate}
\end{proof}

Part (b) of Lemma \ref{lemma:xi} describes a possible completion step for part (a) that also modifies $\xi$ to be left-continuous.
In a nutshell, any decreasing $\xi' \colon \mathbb{Q} \to \mathcal{X}$ that satisfies $\xi'(\eta') \in X(\eta')$ for all $\eta' \in \mathbb{Q}$ admits a left-continuous version on $\R$, $\xi : \eta \mapsto \bigcap_{\eta' < \eta} \xi'(\eta') \in X(\eta)$, where the intersection is over all $\eta' \in \mathbb{Q}$, $\eta'<\eta$.
As $\eta$ increases, $\xi$ follows one of the totally ordered paths through the lattice.
In Figure \ref{fig:lattice} the direction of movement as $\eta$ increases is illustrated by arrows.

In order to prove the existence of a function $\xi$ (and thus $\hat{g}$) that solves the isotonic regression problem, we need that $\mathcal{X}$ is closed under union and intersection.
This property is essential for Lemma \ref{lemma:xi}.

We could also start with a set $\mathcal{X}$ of subsets of $\{z_1, \dots, z_n\}$ that are interpreted as the admissible superlevel sets of the function $g$ that is to be fitted.
If $\mathcal{X}$ is closed under union and intersection, then $\mathcal{X}$ induces a partial order on $\{z_1, \dots, z_n\}$ by Birkhoff's Representation Theorem; see for example \citet{Gurney2011}.
Consequently, the optimal function $\hat{g}$ always exists and is increasing.

Starting with $\mathcal{X}$, one could formulate constraints other than isotonicity on $g$ as long as they can be formulated in terms of restrictions on admissible superlevel sets.
Examples are unimodality or quasi-convexity.
Generally, there is no solution that is simultaneously  optimal with respect to all elementary loss functions; see Section \ref{sec:unimodal} for examples in the case of a unimodality constraint.

\begin{figure}
	\centering
	\includegraphics{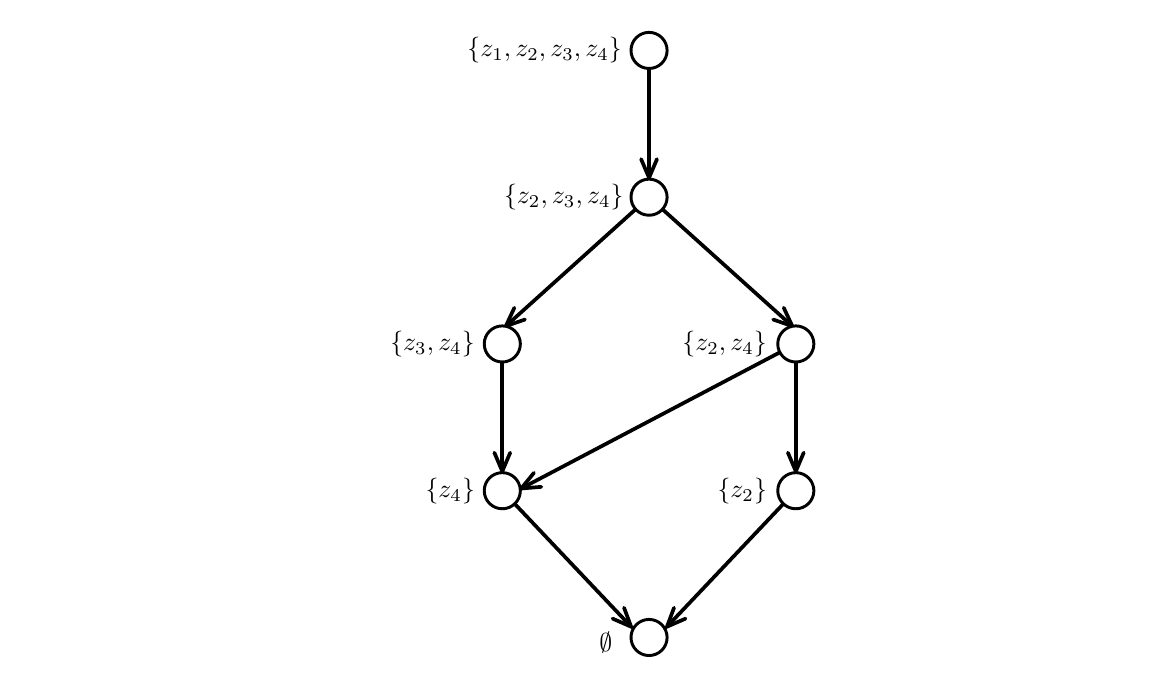}
	\caption{\textbf{Moving through $\mathcal{X}$.} The possible paths through $\mathcal{X}$ based on a specific partial order on $\mathcal{Z} = \{z_1,z_2,z_3,z_4\}$ are illustrated.
		The arrows indicate the direction of moving through the lattice $\mathcal{X}$ as $\eta$ increases
		\label{fig:lattice}}
\end{figure}

The following proposition generalizes Proposition \ref{prop:simplified_1} and is essential to provide min-max bounds on solutions to the isotonic regression problem.
We write $T_x^- = T_{P_x}^- = \inf T(P_x)$ and $T_x^+ = T_{P_x}^+ = \sup T(P_x)$.
\begin{proposition} \label{prop:generalized_1}
	Let $\eta \in \R$, $x \in X(\eta)$.
	Then, subject to $x' \in \mathcal{X}$,
	\begin{align*}
	\max_{x' \supsetneq x} T_{x' \setminus x}^-
	&\le \eta
	\le \min_{x' \subsetneq x} T_{x \setminus x'}^+,\\
	\max_{x' \supsetneq x, x' \notin X(\eta)} T_{x' \setminus x}^+
	&< \eta
	\le \min_{x' \subsetneq x, x' \notin X(\eta)} T_{x \setminus x'}^-.
	\end{align*}
\end{proposition}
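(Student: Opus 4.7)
The plan is to mirror the proof of Proposition \ref{prop:simplified_1} almost verbatim, replacing the role of initial segments with upper sets and using Proposition \ref{prop:4.1} and Corollary \ref{cor:4.1} in place of Proposition \ref{prop:immediate}. The entire argument is a two-step translation: first extract sign information on the integrals $v_{x' \setminus x}(\eta)$ and $v_{x \setminus x'}(\eta)$ from the optimality of $x$, then convert these signs into bounds on the endpoints of the functional via Corollary \ref{cor:TviaV}.

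For the weak bounds, fix $x \in X(\eta)$. If $x' \in \mathcal{X}$ with $x' \supsetneq x$, Proposition \ref{prop:4.1} gives $v_{x' \setminus x}(\eta) = V(\eta, P_{x' \setminus x}) \ge 0$. Corollary \ref{cor:TviaV} splits this into two sub-cases: if the value is zero then $\eta \in T(P_{x' \setminus x})$, hence $T_{x' \setminus x}^- \le \eta$; if it is strictly positive then $\eta > T_{x' \setminus x}^+ \ge T_{x' \setminus x}^-$. In either case $T_{x' \setminus x}^- \le \eta$, and taking the maximum over all admissible $x' \supsetneq x$ yields the lower bound. The upper bound is symmetric: for $x' \subsetneq x$, Proposition \ref{prop:4.1} gives $v_{x \setminus x'}(\eta) \le 0$, so Corollary \ref{cor:TviaV} forces $\eta \le T_{x \setminus x'}^+$ in both the zero and negative sub-cases.

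For the strict bounds, the extra hypothesis $x' \notin X(\eta)$ rules out the equality sub-case above. Specifically, if $x' \supsetneq x$ and $x' \notin X(\eta)$, Corollary \ref{cor:4.1} prohibits $v_{x' \setminus x}(\eta) = 0$, so combined with the weak inequality we obtain $v_{x' \setminus x}(\eta) > 0$, and Corollary \ref{cor:TviaV} yields the strict inequality $\eta > T_{x' \setminus x}^+$. Analogously, if $x' \subsetneq x$ and $x' \notin X(\eta)$, we get $v_{x \setminus x'}(\eta) < 0$, and Corollary \ref{cor:TviaV} gives $\eta \le T_{x \setminus x'}^-$ (this one remains non-strict, as in Proposition \ref{prop:simplified_1}, because left-continuity of $V(\cdot, y)$ only propagates the strict sign downwards in $\eta$). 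Taking the corresponding maximum and minimum over admissible $x'$ closes the proof.

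I do not expect any substantive obstacle: once Proposition \ref{prop:4.1} and Corollary \ref{cor:4.1} are in hand, the extension from total to partial orders is purely notational, since neither the dichotomy $\{V > 0, V = 0, V < 0\}$ nor the link to $T^\pm$ in Corollary \ref{cor:TviaV} depends on the index structure of the covariates.
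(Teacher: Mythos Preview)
Your proposal is correct and follows exactly the paper's approach: extract the sign of $v_{x'\setminus x}(\eta)$ and $v_{x\setminus x'}(\eta)$ from Proposition~\ref{prop:4.1}, sharpen to strict inequalities via (the contrapositive of) Corollary~\ref{cor:4.1} when $x'\notin X(\eta)$, and then invoke Corollary~\ref{cor:TviaV}. The paper's proof is a four-line compression of precisely this argument.
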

\begin{proof}
	For all $x' \supsetneq x$, we have $v_{x' \setminus x}(\eta) \ge 0$.
	For all $x' \subsetneq x$, we have $v_{x \setminus x'}(\eta) \le 0$.
	If $x' \notin X(\eta)$, then both inequalities are strict.
	Corollary \ref{cor:TviaV} implies the result.
\end{proof}

As a generalization of Proposition \ref{prop:simplified_main1}, we obtain the following min-max bounds.
\begin{proposition} \label{prop:generalized_main}
	Let $z \in \mathcal{Z}$ and let $\hat{g}$ be a solution to the isotonic regression problem.
	Then, subject to $x, x' \in \mathcal{X}$,
	\[
	\min_{x' : z \notin x'} \max_{x \supsetneq x'} T_{x \setminus x'}^- \le \hat{g}(z)  \le \max_{x : z \in x} \min_{x' \subsetneq x} T_{x \setminus x'}^+.
	\]
\end{proposition}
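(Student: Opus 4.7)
The plan is to mirror the proof of Proposition \ref{prop:simplified_main1}, replacing single-index arguments with upper-set arguments and invoking Proposition \ref{prop:generalized_1} in place of Proposition \ref{prop:simplified_1}. The bridge between the abstract optimizer $\hat{g}$ and the set-valued optimization performed by $X(\eta)$ is the level-set function associated with $\hat{g}$, which, in view of Proposition \ref{prop:generalized_2}, completely encodes $\hat{g}$.

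Concretely, I would first set $\xi(\eta) := \{z \in \mathcal{Z} : \hat{g}(z) \geq \eta\}$ and verify that $\xi \colon \R \to \mathcal{X}$ is decreasing and left-continuous (in the sense stated in Proposition \ref{prop:generalized_2}) and satisfies $\xi(\eta) \in X(\eta)$ for every $\eta \in \R$. The crucial point here is that by expanding $\E_P S_\eta(g(Z), Y)$ one obtains
\[
\E_P S_\eta(g(Z), Y) = s_{\{z : g(z) \geq \eta\}}(\eta) - \E_P\bigl[\one\{\eta \leq Y\}V(\eta,Y)\bigr],
\]
where the subtracted term is independent of $g$; hence minimizing \eqref{eq:3A} for a fixed $\eta$ over increasing $g$ is equivalent to minimizing $s_x(\eta)$ over $x \in \mathcal{X}$. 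Applying Proposition \ref{prop:generalized_2} then yields the representation $\hat{g}(z) = \max\{\eta : z \in \xi(\eta)\} = \inf\{\eta : z \notin \xi(\eta)\}$.

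Next I would apply Proposition \ref{prop:generalized_1} pointwise. For the upper bound, fix any $\eta$ with $z \in \xi(\eta)$; since $\xi(\eta) \in X(\eta)$ contains $z$, it is an admissible candidate in the outer maximization, so
\[
\eta \le \min_{x' \subsetneq \xi(\eta)} T_{\xi(\eta) \setminus x'}^+ \le \max_{x : z \in x}\, \min_{x' \subsetneq x} T_{x \setminus x'}^+.
\]
Taking the maximum over admissible $\eta$ gives $\hat{g}(z) \le \max_{x : z \in x} \min_{x' \subsetneq x} T_{x \setminus x'}^+$. For the lower bound, I would symmetrically fix $\eta$ with $z \notin \xi(\eta)$, use Proposition \ref{prop:generalized_1} to get $\eta \ge \max_{x'' \supsetneq \xi(\eta)} T_{x'' \setminus \xi(\eta)}^-$, observe that $\xi(\eta)$ is admissible in the outer minimization on the left-hand side, and take the infimum over such $\eta$.

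The main obstacle is not the chain of inequalities above, which is essentially mechanical once the level-set reformulation is in place; rather, it is the justification that $\xi(\eta) \in X(\eta)$ for \emph{every} $\eta \in \R$ simultaneously. This requires the explicit identity for $\E_P S_\eta(g(Z), Y)$ and the observation that, because the isotonicity constraint on $g$ is exactly the statement that its superlevel sets lie in $\mathcal{X}$, simultaneous optimality of $\hat{g}$ across all elementary losses $S_\eta$ translates into pointwise membership $\xi(\eta) \in X(\eta)$. Once this is secured, both bounds follow by a single application of Proposition \ref{prop:generalized_1} and a relaxation of the min and max over the specific optimizer $\xi(\eta)$ to the full min-max expressions.
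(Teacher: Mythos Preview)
Your proposal is correct and follows essentially the same route as the paper: apply the first set of bounds in Proposition \ref{prop:generalized_1} to the representation $\hat{g}(z) = \max\{\eta : z \in \xi(\eta)\} = \inf\{\eta : z \notin \xi(\eta)\}$, then relax $\xi(\eta)$ to a generic $x$ (respectively $x'$) in $\mathcal{X}$ to obtain the min-max bounds. You are in fact more explicit than the paper about why the superlevel-set map $\xi$ of an arbitrary solution $\hat{g}$ satisfies $\xi(\eta) \in X(\eta)$, which the paper treats as already established by its construction of solutions via Proposition \ref{prop:generalized_2}.
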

\begin{proof}
	Applying the first set of bounds from Proposition \ref{prop:generalized_1} to the formula for $\hat{g}$ at \eqref{eq:hatg2}, we obtain
	\[\inf_{\eta : z \notin \xi(\eta)} \, \max_{x \supsetneq \xi(\eta)} T_{x \setminus \xi(\eta)}^- \le \hat{g}(z)  \le\max_{\eta : z \in \xi(\eta)} \, \min_{x' \subsetneq   \xi(\eta)} T_{\xi(\eta) \setminus x'}^+.\]
	The lower bound is bounded from below by 
	$\min_{x' : z \notin x'} \, \max_{x \supsetneq x'} T_{x \setminus x'}^-$, and the upper bound is bounded from above by $\max_{x : z \in x} \, \min_{x' \subsetneq x} T_{x \setminus x'}^+$.
\end{proof}

In the case of partial orders on the covariates, it is also possible to define minimal and maximal solutions.
Recall that, analogously to $I(\eta)$, we defined $X(\eta)$ as the set of superlevel sets $x \in \mathcal{X}$ minimizing $s_x(\eta)$ at \eqref{eq:generalmin}.
Now, let
\begin{align*}
X^-(\eta) &= \{x \in X(\eta) : \nexists\, x' \in X(\eta) \text{ such that } x' \subsetneq x \},
\\
X^+(\eta) &= \{x \in X(\eta) : \nexists\, x' \in X(\eta) \text{ such that } x' \supsetneq x\}
\end{align*}
denote the sets of minimal and maximal elements of $X(\eta)$, respectively.
In order to prove an analogous statement to Proposition \ref{prop:simplified_main2}, we need the following lemma on a modified max-min inequality.

\begin{lemma} \label{lemma:maxmin}
	Suppose that $T$ is of singleton type.
	Let $z \in \mathcal{Z}$ be such that $P(\{z\}\times\R) > 0$.
	Then, subject to $x, x' \in \mathcal{X}$,
	\[
	\max_{x : z \in x} \min_{x' \subsetneq x} T^+_{x \setminus x'} 
	\le \min_{x' : z \notin x'} \max_{x \supsetneq x'} T^-_{x \setminus x'}.
	\]
\end{lemma}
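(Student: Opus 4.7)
The plan is to find a single ``bridge'' quantity $T_{x\setminus x'}$ that interpolates between the two sides of the inequality for any admissible pair $(x,x')$, and then take the appropriate outer extrema. The entire argument hinges on the lattice structure of $\mathcal{X}$ together with the singleton-type hypothesis and the positivity $P(\{z\}\times\R) > 0$; no limiting or convergence considerations are needed.

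First I fix $x, x' \in \mathcal{X}$ with $z \in x$ and $z \notin x'$, and identify canonical competitors for the two inner optimizations. Since $\mathcal{X}$ is closed under unions and intersections, both $x \cap x'$ and $x \cup x'$ lie in $\mathcal{X}$. The point $z$ belongs to $x \setminus x'$, which forces $x \cap x' \subsetneq x$ and $x' \subsetneq x \cup x'$, so $x \cap x'$ is a legitimate candidate in the inner $\min$ on the left and $x \cup x'$ is a legitimate candidate in the inner $\max$ on the right.

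Next I exploit the set identities $x \setminus (x \cap x') = x \setminus x' = (x \cup x') \setminus x'$, which show that both canonical competitors evaluate $T$ on the same underlying measure $P_{x \setminus x'}$. Because $z \in x \setminus x'$ and $P(\{z\}\times\R) > 0$, the measure $P_{x \setminus x'}$ is non-null, so the singleton-type assumption collapses $T^+_{x \setminus x'}$ and $T^-_{x \setminus x'}$ to a common real value. Chaining ``$\min \le$ any member'' and ``any member $\le \max$'' yields
\[
\min_{x'' \subsetneq x} T^+_{x \setminus x''} \;\le\; T^+_{x \setminus (x\cap x')} \;=\; T^-_{(x\cup x') \setminus x'} \;\le\; \max_{x'' \supsetneq x'} T^-_{x'' \setminus x'}.
\]

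Finally, since this chain holds for every admissible pair, I first take $\min$ over $x' \not\ni z$ on the right-hand side (the left-hand side is independent of $x'$), and then $\max$ over $x \ni z$ on the left, which produces the claimed max-min inequality. I do not anticipate a real obstacle: the only subtle point is verifying that the canonical competitors produce \emph{strict} sub- and supersets so that they are admissible, and this rests exactly on having a point $z$ of positive $P$-mass that separates $x$ from $x'$. In particular, the hypothesis $P(\{z\}\times\R) > 0$ is doing the double duty of making $P_{x \setminus x'}$ non-null (so $T$ reduces to a singleton there) and ensuring the lattice competitors differ from $x$ and $x'$ as sets.
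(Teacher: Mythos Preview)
Your proof is correct and follows essentially the same route as the paper: fix an admissible pair, use the lattice competitors $x\cap x'$ and $x\cup x'$ together with the set identity $x\setminus(x\cap x')=x\setminus x'=(x\cup x')\setminus x'$, invoke the singleton hypothesis on the common non-null measure $P_{x\setminus x'}$, and then pass to the outer extrema. The only cosmetic difference is the order of the singleton-type step: the paper first restricts the inner $\min$ to non-null differences and swaps $T^+$ for $T^-$ globally before inserting the competitor, whereas you insert the competitor first and collapse $T^+=T^-$ at that specific set; both are equivalent. One small inaccuracy in your closing commentary: the strict inclusions $x\cap x'\subsetneq x$ and $x'\subsetneq x\cup x'$ follow purely from $z\in x\setminus x'$ as a set-theoretic fact and do not require $P(\{z\}\times\R)>0$; the positive-mass hypothesis is used only to guarantee that $P_{x\setminus x'}$ is non-null so that $T^+_{x\setminus x'}=T^-_{x\setminus x'}$.
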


\begin{proof}
	Let $x'' \in \mathcal{X}$ such that $z \notin x''$, then 
	\begin{align*}
	\max_{x : z \in x}\, \min_{x' \subsetneq x} T^+_{x \setminus x'} &= \max_{x : z \in x} \min_{\substack{x' \subsetneq x\\P((x\setminus x') \times \R) > 0}} T^+_{x \setminus x'} = \max_{x : z \in x} \min_{\substack{x' \subsetneq x\\P((x\setminus x') \times \R) > 0}} T^-_{x \setminus x'}\\
	&\le  \max_{x : z \in x} T^-_{x \setminus (x \cap x'')}
	= \max_{x: z \in x} T^-_{(x \cup x'') \setminus x''}
	\le \max_{x: x \supsetneq x''} T^-_{x \setminus x''},
	\end{align*}
	where the last inequality holds because $x\cup x'' \in \mathcal{X}$ and if $z \in x$ then $x \cup x'' \supsetneq x''$.
\end{proof}

\begin{proposition}\label{prop:4.8}
	Let $z \in \mathcal{Z}$ be such that $P(\{z\} \times \R) > 0$, and let $\xi \colon \R \to \mathcal{X}$ be decreasing and left-continuous.
	\begin{enumerate}[label=(\alph*)]
		\item If $\xi(\eta) \in X^+(\eta)$ for all $\eta \in \R$, then, 
		subject to $x, x' \in \mathcal{X}$,
		\[
		\hat{g}(z)
		= \min_{x' : z \notin x'} \max_{x \supsetneq x'} 
		T_{x \setminus x'}^+
		= \max_{x : z \in x} \min_{x' \subsetneq x} T_{x \setminus x'}^+.
		\]
		
		\item
		If $\xi(\eta) \in X^-(\eta)$ for all $\eta \in \R$, then, 
		subject to $x, x' \in \mathcal{X}$,
		\[
		\hat{g}(z)
		= \min_{x' : z \notin x'} \max_{x \supsetneq x'} 
		T_{x \setminus x'}^-
		= \max_{x : z \in x} \min_{x' \subsetneq x} T_{x \setminus x'}^-.
		\]
	\end{enumerate}
\end{proposition}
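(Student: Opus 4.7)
The plan is to mirror the proof of Proposition \ref{prop:simplified_main2} in the partial-order setting, combining the two sets of bounds in Proposition \ref{prop:generalized_1} with the extremality hypothesis on $\xi$. I treat part (a); part (b) is symmetric. Writing $\hat g(z) = \max\{\eta : z \in \xi(\eta)\} = \inf\{\eta : z \notin \xi(\eta)\}$ via Proposition \ref{prop:generalized_2}, I derive each of the two claimed equalities as a chain of inequalities that collapses.

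For the upper bound, I fix $\eta$ with $z \in \xi(\eta)$ and apply the first bound of Proposition \ref{prop:generalized_1} to $x = \xi(\eta) \in X(\eta)$, yielding $\eta \le \min_{x' \subsetneq \xi(\eta)} T^+_{\xi(\eta)\setminus x'}$; maximizing over admissible $\eta$ (so that $\xi(\eta) \in \{x : z \in x\}$) gives $\hat g(z) \le \max_{x : z \in x} \min_{x' \subsetneq x} T^+_{x \setminus x'}$. For the lower bound, I fix $\eta$ with $z \notin \xi(\eta)$ and use that maximality $\xi(\eta) \in X^+(\eta)$ forces every $x' \supsetneq \xi(\eta)$ to lie outside $X(\eta)$, so the strict second bound of Proposition \ref{prop:generalized_1} applies to all such $x'$; this yields $\max_{x \supsetneq \xi(\eta)} T^+_{x \setminus \xi(\eta)} < \eta$, and taking the infimum over admissible $\eta$ gives $\min_{x' : z \notin x'} \max_{x \supsetneq x'} T^+_{x \setminus x'} \le \hat g(z)$.

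To collapse the chain into equalities I need the coupled inequality
\begin{align*}
\max_{x : z \in x} \min_{x' \subsetneq x} T^+_{x \setminus x'} \le \min_{x' : z \notin x'} \max_{x \supsetneq x'} T^+_{x \setminus x'}.
\end{align*}
Fix $x_0 \ni z$ and $x_0' \not\ni z$. The lattice closure of $\mathcal{X}$ places $x_0 \cap x_0', x_0 \cup x_0' \in \mathcal{X}$, and $z \in x_0 \setminus x_0'$ forces $x_0 \cap x_0' \subsetneq x_0$ and $x_0 \cup x_0' \supsetneq x_0'$. Because $x_0 \setminus (x_0 \cap x_0') = x_0 \setminus x_0' = (x_0 \cup x_0') \setminus x_0'$, the induced measures coincide, and
\begin{align*}
\min_{x' \subsetneq x_0} T^+_{x_0 \setminus x'} \le T^+_{x_0 \setminus (x_0 \cap x_0')} = T^+_{(x_0 \cup x_0') \setminus x_0'} \le \max_{x \supsetneq x_0'} T^+_{x \setminus x_0'};
\end{align*}
maximizing over $x_0$ and minimizing over $x_0'$ closes the chain. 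Part (b) proceeds analogously: $X^-(\eta)$-minimality forces every $x' \subsetneq \xi(\eta)$ to lie outside $X(\eta)$; the first bound of Proposition \ref{prop:generalized_1} then yields $\max_{x \supsetneq \xi(\eta)} T^-_{x \setminus \xi(\eta)} \le \eta$ (delivering the lower bound on $\hat g$), and the second bound yields $\eta \le \min_{x' \subsetneq \xi(\eta)} T^-_{\xi(\eta) \setminus x'}$ (delivering the upper bound), and the same set identity closes the chain with $T^-$ replacing $T^+$ throughout.

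The main obstacle is producing the coupled max-min inequality without invoking the singleton-type hypothesis of Lemma \ref{lemma:maxmin}. That lemma mixes $T^+$ on the left with $T^-$ on the right, whereas here both sides carry the same $T^\pm$; what unlocks the argument is the observation that the measures $P_{x_0 \setminus (x_0 \cap x_0')}$ and $P_{(x_0 \cup x_0') \setminus x_0'}$ are literally equal as set-indexed measures, so the gap between $T^+$ and $T^-$ never enters. The hypothesis $P(\{z\} \times \R) > 0$ plays only a supporting role, ensuring that the strict containments witnessed by $z$ also correspond to a genuine change in the underlying measure.
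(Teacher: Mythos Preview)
Your argument is correct and follows the same overall architecture as the paper's: use the extremality hypothesis $\xi(\eta)\in X^{\pm}(\eta)$ to activate the strict second set of bounds in Proposition~\ref{prop:generalized_1}, combine with the first set of bounds exactly as in the proof of Proposition~\ref{prop:generalized_main}, and then close the resulting chain with a max--min inequality.

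The one genuine difference is how the closing inequality is obtained. The paper invokes Lemma~\ref{lemma:maxmin}, which is stated under the hypothesis that $T$ is of singleton type (because its conclusion compares $T^+$ on the left against $T^-$ on the right, and the proof converts one to the other on non-null measures). You instead prove the version actually needed here---with $T^+$ on both sides for part~(a) and $T^-$ on both sides for part~(b)---directly from the set identity $x_0\setminus(x_0\cap x_0')=(x_0\cup x_0')\setminus x_0'$, so that the two functional values being compared are literally equal and no singleton hypothesis is required. This is the same set identity that underlies the proof of Lemma~\ref{lemma:maxmin}, so the routes are very close; your version is a small but clean improvement in that it dispenses with the singleton-type detour. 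Your remark that $P(\{z\}\times\R)>0$ is only used to guarantee the strict inclusions $x_0\cap x_0'\subsetneq x_0$ and $x_0\cup x_0'\supsetneq x_0'$ is accurate.
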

\begin{proof}
	The proof follows using Lemma \ref{lemma:maxmin} and applying the same steps as in the proof of Proposition \ref{prop:generalized_main} to the second set of bounds in Proposition \ref{prop:generalized_1}.
\end{proof}

As in Section \ref{sec:total}, we denote the solution in part (a) of Proposition \ref{prop:4.8} by $g^+$ and the one in part (b) by $g^-$.
Combining Propositions \ref{prop:4.8} to \ref{prop:4.9} and Corollary \ref{cor:generated lattice}, gives a complete characterizations of all possible solutions to the isotonic regression problem for partial orders.

For the following results, it is not required that $g^-$, $g^+$ are the solutions from Proposition \ref{prop:4.8}.
Unless specified, they do not even need to satisfy $g^- \le g^+$ everywhere.
We define $\xi^- \colon \eta \mapsto \{z : {g}^-(z)\geq \eta\}$ and $\xi^+$ analogously.

\begin{proposition}\label{prop:4.6}
	Let $g^-$ and $g^+$ be two solutions to the isotonic regression problem such that $g^- \leq g^+$.
	Let $\hat{g}$ be isotonic, $g^- \leq \hat{g} \leq g^+$, and suppose that all superlevel sets of $\hat{g}$ lie in $\bigcup_{\eta \in \R} X(\eta)$. Then, $\hat{g}$ is a solution to the isotonic regression problem.
\end{proposition}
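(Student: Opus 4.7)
The plan is to translate the functional inequality $g^- \le \hat{g} \le g^+$ into a sandwich of superlevel sets in $\mathcal{X}$, and then to invoke Lemma \ref{lem:4.1}(b), which is precisely tailored to this configuration.

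First I would associate to each of the three isotonic functions its superlevel-set map: for $\eta \in \R$, set
\[
\xi(\eta) = \{z \in \mathcal{Z} : \hat{g}(z) \ge \eta\}, \quad \xi^-(\eta) = \{z : g^-(z) \ge \eta\}, \quad \xi^+(\eta) = \{z : g^+(z) \ge \eta\}.
\]
Since all three functions are isotonic, these maps take values in $\mathcal{X}$. The reformulation of the minimization problem \eqref{eq:3A} in terms of superlevel sets (set up before Proposition \ref{prop:4.1} and made explicit by Proposition \ref{prop:generalized_2}) identifies solutions to the isotonic regression problem with maps $\eta \mapsto \xi(\eta)$ satisfying $\xi(\eta) \in X(\eta)$ for every $\eta$. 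Consequently, the assumption that $g^-$ and $g^+$ solve the problem yields $\xi^-(\eta), \xi^+(\eta) \in X(\eta)$ for all $\eta \in \R$, and the hypothesis on $\hat{g}$ reads $\xi(\eta) \in \bigcup_{\eta' \in \R} X(\eta')$ for every $\eta$.

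Next, the pointwise inequality $g^- \le \hat{g} \le g^+$ translates directly into the reverse-ordered inclusions $\xi^-(\eta) \subseteq \xi(\eta) \subseteq \xi^+(\eta)$ at every $\eta \in \R$. Applying Lemma \ref{lem:4.1}(b) with $x' = \xi^+(\eta)$, $x = \xi(\eta)$, and $x'' = \xi^-(\eta)$ then yields $\xi(\eta) \in X(\eta)$ for all $\eta \in \R$. By the equivalence recalled above, this means $\hat{g}$ minimizes \eqref{eq:3A}, so $\hat{g}$ is a solution to the isotonic regression problem.

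The step I expect to be most delicate is simply the clean bookkeeping of the set inclusions; there is no need for an induction or for the construction of auxiliary $\iota$-type functions as in the proof of Proposition \ref{prop:suffcrit}, because Lemma \ref{lem:4.1}(b) already absorbs all the combinatorial work. The essential role of the assumption that $\hat{g}$'s superlevel sets lie in $\bigcup_{\eta' \in \R} X(\eta')$ is precisely to enable this application: without it, sandwiching between two solutions does not in general produce another solution, exactly as in the totally ordered setting.
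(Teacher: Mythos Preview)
Your proposal is correct and follows essentially the same approach as the paper's proof: define the superlevel-set maps $\xi, \xi^-, \xi^+$, use $g^- \le \hat{g} \le g^+$ to obtain the sandwich $\xi^-(\eta) \subseteq \xi(\eta) \subseteq \xi^+(\eta)$, and then apply Lemma~\ref{lem:4.1}(b) to conclude $\xi(\eta) \in X(\eta)$ for all $\eta$. The paper additionally records that $\xi, \xi^-, \xi^+$ are decreasing and left-continuous and spells out the inclusion via the max-formula from Proposition~\ref{prop:generalized_2}, but the core argument is identical to yours.
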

\begin{proof} 
	For $\eta \in \R$ define $\xi(\eta)= \{z : \hat{g}(z)\geq \eta\}$.
	The functions $\xi, \xi^-, \xi^+$ are decreasing, that is $\xi(\eta) \supseteq \xi(\eta')$ for $\eta \leq \eta'$, and left-continuous.
	For $\xi^-$, $\xi^+$ it holds that $\xi^-(\eta)$, $\xi^+(\eta) \in X(\eta)$.
	Since, for all $z\in \mathcal{Z}$, it holds that 
	\begin{align*}
	g^-(z) = \max \{ \eta : z \in \xi^-(\eta)\} &\leq
	g(z) = \max \{ \eta : z \in \xi(\eta)\} \\&\leq
	g^+(z) = \max \{ \eta : z \in \xi^+(\eta)\},
	\end{align*}
	we obtain $\xi^-(\eta) \subseteq \xi(\eta) \subseteq \xi^+(\eta)$ for all $\eta \in \R$.
	Lemma \ref{lem:4.1} (b) implies the result.
\end{proof}

The following corollary is an immediate consequence of Lemma \ref{lem:4.1} (c).
\begin{corollary}\label{cor:generated lattice}
	Let $g^-$ and $g^+$ be two solutions to the isotonic regression problem. 
	Then, the distributive lattice generated by $\xi^-$ and $\xi^+$ is a subset of $\bigcup_{\eta \in \R} X(\eta)$.
\end{corollary}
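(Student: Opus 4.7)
The plan is to show by induction on the number of lattice operations that every element of the distributive lattice $\mathcal{L}$ generated by the families $\{\xi^-(\eta) : \eta \in \R\}$ and $\{\xi^+(\eta) : \eta \in \R\}$ belongs to $X(\eta'')$ for some $\eta'' \in \R$. For the base case, since $g^-$ and $g^+$ are both solutions to the isotonic regression problem, minimizing $\E_P S_\eta(g(Z),Y)$ for each fixed $\eta$ translates exactly to the statement that their superlevel sets $\xi^-(\eta)$ and $\xi^+(\eta)$ minimize $s_x(\eta)$ over $x \in \mathcal{X}$ (as in the proof of Proposition \ref{prop:generalized_2} read in reverse). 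Hence $\xi^-(\eta), \xi^+(\eta) \in X(\eta)$ for every $\eta \in \R$, and all generators of $\mathcal{L}$ lie in $\bigcup_{\eta\in\R} X(\eta)$.

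For the inductive step, assume $x_1 \in X(\eta_1)$ and $x_2 \in X(\eta_2)$, and consider $x_1 \cup x_2$ and $x_1 \cap x_2$. If $\eta_1 \ne \eta_2$, say $\eta_1 < \eta_2$, then Lemma \ref{lem:4.1} (c) applies directly and yields $x_1 \cup x_2 \in X(\eta_1)$ and $x_1 \cap x_2 \in X(\eta_2)$. If $\eta_1 = \eta_2 = \eta$, I would argue by the standard submodularity trick: the decomposition of $x_1 \cup x_2$ into the disjoint pieces $x_1 \setminus x_2$, $x_2 \setminus x_1$, $x_1 \cap x_2$ gives the additive identity $v_{x_1}(\eta) + v_{x_2}(\eta) = v_{x_1 \cup x_2}(\eta) + v_{x_1 \cap x_2}(\eta)$, and the same identity therefore holds for $s_\cdot(\eta)$. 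Since $\mathcal{X}$ is closed under union and intersection, the minimum value $m$ of $s_\cdot(\eta)$ over $\mathcal{X}$ satisfies $s_{x_1 \cup x_2}(\eta), s_{x_1 \cap x_2}(\eta) \ge m$, while their sum is $s_{x_1}(\eta) + s_{x_2}(\eta) = 2m$; hence both equal $m$ and both lie in $X(\eta)$. This completes the induction, and thus the whole lattice $\mathcal{L}$ is contained in $\bigcup_{\eta \in \R} X(\eta)$.

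The only genuine obstacle is the coincident case $\eta_1 = \eta_2$, which is not covered by Lemma \ref{lem:4.1} (c) as stated, but it is handled by the additivity/submodularity of $s_\cdot(\eta)$ as just outlined. Everything else is a routine inductive bookkeeping argument, which is why the author flags the result as immediate from Lemma \ref{lem:4.1} (c).
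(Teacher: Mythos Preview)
Your proof is correct and follows the same inductive route the paper has in mind when it calls the corollary ``an immediate consequence of Lemma~\ref{lem:4.1}~(c)'': close the generators under $\cup$ and $\cap$ using that lemma. Your extra care with the coincident case $\eta_1=\eta_2$ is warranted, since Lemma~\ref{lem:4.1}~(c) is stated only for $\eta<\eta'$; your submodularity argument is perfectly valid, and alternatively one can note that the proof of Lemma~\ref{lem:4.1}~(a) (and hence (c)) goes through verbatim for $\eta\le\eta'$, since Proposition~\ref{prop:4.1} already forces $v_{x'\setminus x}(\eta)\ge 0\ge v_{x'\setminus x}(\eta)$ when both $x,x'\in X(\eta)$.
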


Having two solutions $g^-$ and $g^+$ allows us to find all solutions to the isotonic regression problem with superlevel sets that lie in the lattice generated by $\xi^-$ and $\xi^+$.
Examples include solutions that transition from $g^-$ to $g^+$ at a particular threshold $\eta$,
\[
\hat{g}(z) = \begin{cases} g^+(z), & z \in \xi^+(\eta), \\ g^-(z), & \text{otherwise}, \end{cases}
\]
or pointwise convex combinations of solutions with $\alpha \in (0, 1)$,
\[
\hat{g}(z) = \alpha g^-(z) + (1 - \alpha) g^+(z).
\]

In order to refine the lattice of minimizing upper sets from Corollary \ref{cor:generated lattice} with the purpose to characterize all solutions, we pose the question whether simple separation rules exist for the set difference of consecutive lattice elements.
These sets necessarily take the form of the intersection of a level set of $g^-$ and a level set of $g^+$, that is, sets of the form $\{z : g^-(z)= \eta^- \text{ and } g^+(z)= \eta^+ \}$.
These rules do exist as we show in Propositions \ref{prop:4.7} and \ref{prop:4.9}.
First, we introduce the notion of a separation.

\begin{definition}
	A \emph{separation} of a set $Z \in \mathcal{P}(\mathcal{Z})$ is a collection of sets $Z_1, \dots, Z_n \subseteq Z$ that are pairwise separated and satisfy $Z= \bigcup_{i=1}^n Z_i$.
	Two sets $Z_i$ and $Z_j$ are \emph{separated} with respect to $Z$ if for all $z' \in Z_i$ and $z'' \in Z_j$, there does not exist a finite sequence $(z_k)_{k=1, \dots,m}$, $z_k \in Z$, $z_1=z'$, $z_m=z''$ that for all $k=1,\dots,m-1$ satisfies $z_k \preceq z_{k+1}$ or $z_{k+1} \preceq z_k$.
\end{definition}

\begin{proposition}\label{prop:4.7} 
	Let $g^-$ and $g^+$ be two solutions to the isotonic regression problem, and let $\eta^-, \eta^+ \in \R$, $\eta^- < \eta^+$, be such that $Z=\{z : g^-(z)= \eta^- \text{ and } g^+(z)= \eta^+ \}$ is nonempty.
	Furthermore, let $Z_1, \dots, Z_n$ be a separation of $Z$, and let $x'=\xi^-(\eta^-) \cap \xi^+(\eta^+)$ and $x''=x' \setminus Z$.
	Then, $x'' \cup Z_k \in X(\eta)$ for all $\eta \in (\eta^-, \eta^+]$, $k = 1, \dots, n$.
\end{proposition}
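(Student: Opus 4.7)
My plan is to verify the Proposition~\ref{prop:4.1} characterization of $X(\eta)$-membership for $y_k := x'' \cup Z_k$ at each $\eta \in (\eta^-, \eta^+]$, proceeding in three stages: first check upper-closure, then handle the base case $\eta = \eta^+$, then extend by a Lemma~\ref{lem:4.1}(b) sandwich.

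First I would check $y_k \in \mathcal{X}$. For $z \in y_k$ and $z \preceq z'$, upper-closure of $x'$ gives $z' \in x'$. If $z \in x''$, then $z' \in Z$ would sandwich $\eta^- = g^-(z') \geq g^-(z) \geq \eta^-$ and $\eta^+ = g^+(z') \geq g^+(z) \geq \eta^+$ by monotonicity, forcing $z \in Z$ and contradicting $z \in x' \setminus Z$; hence $z' \in x''$. If $z \in Z_k$ and $z' \in Z$, the length-two comparability path $(z, z')$ in $Z$ forces $z' \in Z_k$ by separation; otherwise $z' \in x''$. The same argument applied componentwise shows that every auxiliary set $y^{(J)} := x'' \cup \bigcup_{l \in J} Z_l$ lies in $\mathcal{X}$ for $J \subseteq \{1,\dots,n\}$.

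Next I would handle the base case $\eta = \eta^+$. From $\xi^-(\eta^-) \in X(\eta^-)$ and $\xi^+(\eta^+) \in X(\eta^+)$ with $\eta^- < \eta^+$, Lemma~\ref{lem:4.1}(c) gives $x' \in X(\eta^+)$. I would then show $v_{Z_l}(\eta^+) = 0$ for each $l$, by pairing the $\leq 0$ bound from Proposition~\ref{prop:4.1} applied to $x' \in X(\eta^+)$ (with the upper-closed subsets $y^{(J)}$ constructed in Stage~1) against the matching $\geq 0$ bound extracted from another $X(\eta^+)$-set such as $\xi^+(\eta^+)$, using that Lemma~\ref{lem:4.1}(a) makes the ``glue'' piece $\xi^+(\eta^+) \setminus \xi^-(\eta^-)$ negligible under $V$ at $\eta^+$. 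With $v_{Z_l}(\eta^+) = 0$ componentwise, Corollary~\ref{cor:4.1} allows peeling off the $Z_l$ with $l \neq k$ from $x'$ without leaving $X(\eta^+)$, yielding $y_k = x' \setminus \bigcup_{l \neq k} Z_l \in X(\eta^+)$.

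Finally, for general $\eta \in (\eta^-, \eta^+]$, I would invoke Lemma~\ref{lem:4.1}(b): the set $y_k$ lies in $X(\eta^+) \subseteq \bigcup_{\eta'} X(\eta')$ and is sandwiched between the $X(\eta)$-set $\xi^-(\eta^-) \cap \xi^+(\eta) \supseteq x' \supseteq y_k$ (from Lemma~\ref{lem:4.1}(c) with $\eta^- < \eta$) above and a matching $X(\eta)$-subset built from $\xi^-(\eta) \cup \xi^+(\eta^+) \in X(\eta)$ (from Lemma~\ref{lem:4.1}(c) with $\eta < \eta^+$) intersected with the upper bound, to produce a lower bracket lying inside $y_k$.

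The main obstacle is establishing $v_{Z_l}(\eta^+) = 0$ for each separation component. The $\leq 0$ direction falls out directly from Proposition~\ref{prop:4.1} applied to $x' \in X(\eta^+)$ with subset $y^{(\{1,\dots,n\}\setminus \{l\})}$, but pinning each $v_{Z_l}(\eta^+)$ individually to zero requires the matching $\geq 0$ bound, and its proof is where the full strength of the separation hypothesis enters: only because each $Z_l$ can be independently included or excluded from $y^{(J)}$ while preserving upper-closure can the individual contributions $v_{Z_l}(\eta^+)$ be isolated from the summed inequalities, and the witness supersets with $v$-differences matching $v_{Z_l}(\eta^+)$ from above require the separation to remain in $\mathcal{X}$.
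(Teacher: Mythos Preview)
Your Stage~1 verification that each $y^{(J)}$ is an upper set is correct and, in fact, supplies detail that the paper omits. The gap is in Stage~2, precisely at the point you flag as ``the main obstacle'': you do not actually obtain the lower bound $v_{Z_l}(\eta^+) \geq 0$. The $\leq 0$ direction follows cleanly from $x' \in X(\eta^+)$ and $x' \setminus Z_l \in \mathcal{X}$, as you say. But your proposed source for the $\geq 0$ direction---``another $X(\eta^+)$-set such as $\xi^+(\eta^+)$'' together with Lemma~\ref{lem:4.1}(a) killing the glue piece $\xi^+(\eta^+) \setminus \xi^-(\eta^-)$---does not produce it: $\xi^+(\eta^+)$ contains $Z_l$, so Proposition~\ref{prop:4.1} applied there only yields further $\leq 0$ bounds on pieces of $\xi^+(\eta^+)$, never a $\geq 0$ bound on $v_{Z_l}$. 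No amount of isolating $Z_l$ from summed inequalities will manufacture the missing sign without a witness set in $X(\eta^+)$ that is \emph{disjoint} from $Z_l$ and has $Z_l$ attachable as an upper-closed extension.

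The missing observation is that $x'' \in X(\eta)$ for every $\eta \in (\eta^-,\eta^+]$. The paper obtains this by writing $x'' = \xi^-(\eta^- + \epsilon_1) \cup \xi^+(\eta^+ + \epsilon_2)$ for small $\epsilon_1,\epsilon_2 > 0$, so that Lemma~\ref{lem:4.1}(c) gives $x'' \in X(\eta^- + \epsilon_1)$, and then the sandwich $\xi^-(\eta) \subseteq x'' \subseteq x' \subseteq \xi^+(\eta)$ together with Lemma~\ref{lem:4.1}(b) places both $x'$ and $x''$ in $X(\eta)$. Once $x'' \in X(\eta)$, Proposition~\ref{prop:4.1} applied to the superset $x'' \cup Z_k \in \mathcal{X}$ immediately yields $v_{Z_k}(\eta) \geq 0$, which combined with your $\leq 0$ gives $v_{Z_k}(\eta) = 0$ and hence $x'' \cup Z_k \in X(\eta)$ by Corollary~\ref{cor:4.1}. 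This works uniformly in $\eta \in (\eta^-,\eta^+]$, so your Stage~3 extension becomes unnecessary; indeed, your proposed lower bracket $(\xi^-(\eta) \cup \xi^+(\eta^+)) \cap (\text{upper bound})$ in Stage~3 contains all of $x'$, not just $y_k$, so that sandwich would not have closed even if Stage~2 had succeeded.
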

\begin{proof}	
	Without loss of generality, we show the claim for $k = 1$.
	By Lemma \ref{lem:4.1} (c), we have $x' \in X(\eta^+)$ and $x'' = \xi^-(\eta^- + \epsilon_1) \cup \xi^+(\eta^+ + \epsilon_2)  \in X(\eta^-  + \epsilon_1)$ for some $\epsilon_1, \epsilon_2 > 0$.
	More precisely, we have $x', x'' \in X(\eta)$ for all $\eta \in (\eta^-, \eta^+]$ by Lemma \ref{lem:4.1} (b), since $\xi^-(\eta) \subseteq x'' \subseteq x' \subseteq \xi^+(\eta)$, $\eta \in (\eta^-, \eta^+]$.
	
	Let $x_1=x'' \cup Z_1$ and $x_2=x' \setminus Z_1$ both of which are upper sets in $\mathcal{X}$.
	Then $Z_1= x_1 \setminus x''$ but also $Z_1=x'\setminus x_2$.
	Therefore, $v_{Z_1}(\eta) \geq 0 \geq v_{Z_1}(\eta)$ for all $\eta \in (\eta^-, \eta^+]$ by Proposition \ref{prop:4.1}. Then the statement follows from Corollary \ref{cor:4.1}. 
\end{proof}

Proposition \ref{prop:4.7} allows us to find additional solutions to the isotonic regression problem with superlevel sets where separation elements have been added to known minimizing superlevel sets.
Using the variables defined in Proposition \ref{prop:4.7}, one example of a new solution is
\[
\hat{g}(z) = \begin{cases}
\eta, & z \in Z_1, \\
g^+(z), & z \in x'', \\
g^-(z), & \text{otherwise},
\end{cases}
\]
where $\eta \in (\eta^-, \eta^+]$.
Iterative application of Proposition \ref{prop:4.7} recovers all minimizing superlevel sets that can be obtained from the solutions in Proposition \ref{prop:4.8} via Corollary \ref{cor:generated lattice} and the information on the partially ordered set $\mathcal{Z}$.

Proposition \ref{prop:4.9} allows us to recover the remaining minimizing superlevel sets when the distribution $P$ of the random vector $(Z, Y)$ is fully known. In fact, this proposition is a generalization of Proposition \ref{prop:4.7} that determines whether a level set intersection of $g^-$ and $g^+$ can be split further by calculating values of the lower bound of the functional $T$.

\begin{proposition} \label{prop:4.9}
	Let $g^-$ and $g^+$ be two solutions to the isotonic regression problem, and let $\eta^-, \eta^+ \in \R$, $\eta^- < \eta^+$, be such that $Z=\{z : g^-(z)= \eta^- \text{ and } g^+(z)= \eta^+ \}$ is nonempty.
	Furthermore, let $x'=\xi^-(\eta^-) \cap \xi^+(\eta^+)$ and $x''=x' \setminus Z$.
	For $x \in \mathcal{X}$, $x' \supsetneq x \supsetneq x''$, we have $T_{x' \setminus x}^- \leq \eta^-$ if and only if $x \in X(\eta)$ for all $\eta \in (\eta^-, \eta^+]$.
\end{proposition}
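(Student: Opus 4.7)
The plan is to prove the biconditional by leveraging a structural fact already contained in the proof of Proposition~\ref{prop:4.7}: both $x'$ and $x''$ belong to $X(\eta)$ for every $\eta \in (\eta^-,\eta^+]$. Hence they share the common minimum $s_{x'}(\eta)=s_{x''}(\eta)$ on this interval, which translates to $v_{x'\setminus x''}(\eta)=0$ there. Because $x''\subsetneq x\subsetneq x'$, the identification sum splits additively as
\[
v_{x'\setminus x}(\eta)\,+\,v_{x\setminus x''}(\eta)\;=\;v_{x'\setminus x''}(\eta)\;=\;0,\qquad \eta\in(\eta^-,\eta^+],
\]
and this decomposition is the linchpin of both directions.

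The backward implication is short. If $x\in X(\eta)$ on $(\eta^-,\eta^+]$, then $s_x(\eta)=s_{x'}(\eta)$ on this interval, so $v_{x'\setminus x}(\eta)=0$ there. Since $V(\cdot,P_{x'\setminus x})$ is increasing and left-continuous, $\{\eta : v_{x'\setminus x}(\eta)<0\}\subseteq(-\infty,\eta^-]$, and the definition of $T^-$ yields $T_{x'\setminus x}^-\le\eta^-$ directly.

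For the forward implication I would assume $T_{x'\setminus x}^-\le\eta^-$, apply Corollary~\ref{cor:TviaV} to obtain $v_{x'\setminus x}(\eta)\ge 0$ for $\eta>\eta^-$, and then use the displayed decomposition to conclude $v_{x\setminus x''}(\eta)\le 0$ on $(\eta^-,\eta^+]$. Fixing $\eta$ in that interval, it remains to check the two conditions of Proposition~\ref{prop:4.1} for $x$. For $y\in\mathcal{X}$ with $y\supsetneq x$, I would put $y^{*}:=y\cap x'\in\mathcal{X}$ and split disjointly $y\setminus x=(y^{*}\setminus x)\cup(y\setminus x')$; the second piece has nonnegative $v$-integral because $v_{(y\cup x')\setminus x'}(\eta)\ge 0$ from $x'\in X(\eta)$ via Proposition~\ref{prop:4.1}, and the first because
\[
v_{y^{*}\setminus x}(\eta)\;=\;v_{y^{*}\setminus x''}(\eta)-v_{x\setminus x''}(\eta)\;\ge\;0,
\]
using $x''\in X(\eta)$, $y^{*}\supseteq x\supseteq x''$, and the sign of $v_{x\setminus x''}(\eta)$. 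The case $y\subsetneq x$ is handled symmetrically with $y^{*}:=y\cup x''\in\mathcal{X}$ and the decomposition $x\setminus y=(x\setminus y^{*})\cup(x''\setminus(x''\cap y))$, both pieces of which carry $v$-sign $\le 0$ by the same mechanism.

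The only delicate point is the bookkeeping: every auxiliary set produced in the argument must be forced back into the lattice $\mathcal{X}$ before Proposition~\ref{prop:4.1} can be invoked, which is why each decomposition is formed by either intersecting with $x'$ or taking a union with $x''$. Once these two stabilising operations are in place, every piece of each decomposition inherits its $v$-sign directly from $x'\in X(\eta)$, $x''\in X(\eta)$, and the single extra inequality $v_{x\setminus x''}(\eta)\le 0$ extracted from the hypothesis.
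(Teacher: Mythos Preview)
Your proof is correct, and it takes a genuinely different route from the paper's argument. Both proofs start from the fact (established in the proof of Proposition~\ref{prop:4.7}) that $x',x''\in X(\eta)$ for all $\eta\in(\eta^-,\eta^+]$, and the backward implications are essentially the same. For the forward direction, however, the paper works at the level of the functional bounds: it first records $T_{x'\setminus k}^+\ge\eta^+$ for $k\subsetneq x'$ and $T_{k\setminus x''}^-\le\eta^-$ for $k\supsetneq x''$, then invokes the Cauchy-type inequality of Lemma~\ref{lemma:functional_bounds} to transfer these bounds to $T_{x\setminus k}^+$ and $T_{k\setminus x}^-$, and finally converts back to signs of $v$. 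You bypass Lemma~\ref{lemma:functional_bounds} entirely and stay at the level of identification sums, exploiting the single identity $v_{x'\setminus x}(\eta)+v_{x\setminus x''}(\eta)=v_{x'\setminus x''}(\eta)=0$ to extract the one extra inequality $v_{x\setminus x''}(\eta)\le 0$, after which the lattice-stabilised decompositions $y\mapsto y\cap x'$ and $y\mapsto y\cup x''$ reduce everything to Proposition~\ref{prop:4.1} applied to $x'$ and $x''$. Your approach is more elementary and self-contained; the paper's approach makes the role of the functional's mean-value property more visible and mirrors the structure of the total-order analogue (Proposition~\ref{prop:immediate2}).
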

\begin{proof}
	We have $x', x'' \in X(\eta)$ for all $\eta \in (\eta^-, \eta^+]$ as in the proof of Proposition \ref{prop:4.7}.
	Then, $v_{x' \setminus k} (\eta^+) \leq 0$ for all $k \in \mathcal{X}$, $k \subsetneq x'$, by Proposition \ref{prop:4.1}, and hence $T_{x' \setminus k}^+ \geq \eta^+$ by Corollary \ref{cor:TviaV}.
	Analogously, $v_{k \setminus x''}(\eta) \geq 0$ for all $k \in \mathcal{X}$, $k\supsetneq x''$, $\eta \in (\eta^-, \eta^+]$, leading to $T_{k \setminus x''}^- \leq \eta^-$.
	
	For the first part of the statement, let $x \in \mathcal{X}$, $x' \supsetneq x \supsetneq x''$, be such that $T_{x'\setminus x}^- \leq \eta^-$.
	We show that $x \in X(\eta)$ for all $\eta \in (\eta^-,\eta^+]$ using Proposition \ref{prop:4.1}.
	We have $T_{x' \setminus k}^+ \leq \max \{ T_{x' \setminus x}^-, T_{x \setminus k}^+\}$ for all $k \subsetneq x$ by Lemma \ref{lemma:functional_bounds}.
	Since $T_{x' \setminus x}^- \leq \eta^-$ by assumption and as just shown $T_{x' \setminus k}^+ \geq \eta^+$, we obtain $T_{x \setminus k}^+ \geq \eta^+$.
	By Corollary \ref{cor:TviaV}, $v_{x \setminus k}(\eta) \leq 0$ for all $k \subsetneq x$, $\eta \leq \eta^+$, that is, the first inequality in Proposition \ref{prop:4.1} holds for all $\eta \in (\eta^-,\eta^+]$.
	Similarly, $T_{k \setminus x''}^- \geq \min \{T_{k \setminus x}^-, T_{x \setminus x''}^+\}$ for all $k \supsetneq x$.
	Since $T_{k \setminus x''}^- \leq \eta^-$ and $T_{x \setminus x''}^+ \geq \eta^+$, we obtain $T_{k \setminus x}^- \leq \eta^-$.
	Therefore, $v_{k \setminus x}(\eta) \geq 0$, for all $\eta > \eta^-$, $k \supsetneq x$, that is, the second inequality in Proposition \ref{prop:4.1} holds for all $\eta \in (\eta^-,\eta^+]$.
	
	To prove the converse, note that $x \in X(\eta)$ for all $\eta \in (\eta^-, \eta^+]$ implies that $v_{k \setminus x}(\eta) \geq 0$ for all $\eta \in (\eta^-, \eta^+]$, $k \supsetneq x$.
	Hence, in particular, $v_{x' \setminus x}(\eta) \geq 0$ and $T_{x' \setminus x}^- \leq \eta$ for all $\eta \in (\eta^-, \eta^+]$, and, therefore $T_{x' \setminus x}^- \leq \eta^-$.
\end{proof}

\subsection{Partitioning the covariate set} \label{sec:partition}
In Section \ref{sec:PAV}, we discussed how the PAV algorithm creates a partition of $\mathcal{Z}$, and that it leads to a solution $\hat{g}$ of the isotonic regression problem in the context of total orders.
In this section, we show how a solution to the isotonic regression problem leads to a corresponding partition $\mathcal{Q}$ of $\mathcal{Z}$, such that the solution satisfies
\begin{align*}
\hat{g}(z) \in T(P_Q), \quad \text{for all } Q \in \mathcal{Q},\,\, z \in Q,
\end{align*}
and the solution is constant on every element of the partition.
Let $T$ be a functional of singleton type, and $\hat{g}$ be a solution to the isotonic regression problem.
Subject to $x, x', k, k' \in \mathcal{X}$, the combination of Proposition \ref{prop:generalized_main} and Lemma \ref{lemma:maxmin} yields
\begin{align}
\hat{g}(z) 
&= \max_{x : z \in x} 
\min_{x' \subsetneq x} 
T_{x\setminus x'}^+\label{eq:1}
\\&= \min_{k' : z \notin k'} 
\max_{k \supsetneq k'} 
T_{k\setminus k'}^-.\label{eq:2}
\end{align}
for all $z \in \mathcal{Z}$ with $P(\{z\} \times \R) > 0$.
We call $(x, x')$ a max-min pair for $z$ if $z \in x$, $x' \subsetneq x$, and $\hat{g}(z) = T_{x \setminus x'}^+$, and we call $(k', k)$ a min-max pair for $z$ if $z \notin k'$, $k \supsetneq k'$, and $\hat{g}(z) = T_{k \setminus k'}^-$.
For a pair $x, x' \in \mathcal{X}$ such that $T_{x \setminus x'}^- = T_{x \setminus x'}^+$, we also use the notation $T_{x \setminus x'}^\pm$.
Note that for a functional $T$ of singleton type, we have $T(P_{x \setminus x'}) = \{ T_{x \setminus x'}^\pm \}$ if $P((x \setminus x') \times \R) > 0$.
The following lemma provides the necessary tools to construct the partition $\mathcal{Q}$.
\begin{lemma}\label{lem:4.3}
	Let $T$ be a functional of singleton type, and $\hat{g}$ be a solution to the isotonic regression problem.
	Furthermore, let $z \in \mathcal{Z}$ such that $P(\{z\} \times \R) > 0$, and let $(x_1, x_1'), (x_2, x_2')$ be max-min pairs for $z$, and $(k_1', k_1), (k_2', k_2)$ be min-max pairs for $z$.
	Then the following statements hold:
	\begin{enumerate}[label=(\alph*)]
		\item We have that $\hat{g}(z) = T_{x_1 \setminus k_1'}^\pm = T_{(x_1 \cup x_2) \setminus k_1'}^\pm = T_{x_1 \setminus (k_1' \cap k_2')}^\pm$.
		\item If $x, k' \in \mathcal{X}$ such that $z \in x$, $z \notin k'$, and $\hat{g}(z) = T_{x \setminus k'}^\pm$, then $(x, x \cap k')$ is a max-min pair for $z$ and $(k', k' \cup x)$ is a min-max pair for $z$.
		\item If $\tilde{z} \in x_1 \setminus k_1'$, then $(x_1, x_1')$ is a max-min pair for $\tilde{z}$ and $(k_1', k_1)$ is a min-max pair for $\tilde{z}$.
	\end{enumerate}
\end{lemma}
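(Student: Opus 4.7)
The plan is to prove (c) first, then (a)'s first equality by the same technique, verify (b) directly from the definitions, and finally reduce the remaining two equalities in (a) to the first via a lattice argument. For (c), I will sandwich $\hat{g}(\tilde z)$ between two copies of $\hat{g}(z)$ using the formulas \eqref{eq:1} and \eqref{eq:2}. Plugging $x = x_1$ into \eqref{eq:1} for $\tilde z$ is legal because $\tilde z \in x_1$; combined with the fact that $(x_1, x_1')$ is a max-min pair for $z$, which pins $\min_{x'' \subsetneq x_1} T^+_{x_1 \setminus x''}$ at $\hat{g}(z)$, this yields $\hat{g}(\tilde z) \geq \hat{g}(z)$. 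Dually, plugging $k' = k_1'$ into \eqref{eq:2} for $\tilde z$ is legal because $\tilde z \notin k_1'$, and together with $(k_1', k_1)$ being a min-max pair for $z$ it yields $\hat{g}(\tilde z) \leq \hat{g}(z)$. Once $\hat{g}(\tilde z) = \hat{g}(z)$ is in hand, the two pair memberships for $\tilde z$ follow simply by rewriting the three defining conditions with $\hat{g}(\tilde z)$ in place of $\hat{g}(z)$.

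For (a)'s first equality I would run exactly the same sandwich, but applied to the set $x_1 \setminus k_1'$ instead of to a single point. Using the max-min pair at the admissible subset $x'' = x_1 \cap k_1' \subsetneq x_1$ gives $T^+_{x_1 \setminus k_1'} = T^+_{x_1 \setminus (x_1 \cap k_1')} \geq \hat{g}(z)$, and using the min-max pair at the admissible superset $k = x_1 \cup k_1' \supsetneq k_1'$ gives $T^-_{x_1 \setminus k_1'} = T^-_{(x_1 \cup k_1') \setminus k_1'} \leq \hat{g}(z)$. Since $z \in x_1 \setminus k_1'$ guarantees positive $P$-mass on this set and $T$ is of singleton type, $T^-$ and $T^+$ collapse there, forcing $T^\pm_{x_1 \setminus k_1'} = \hat{g}(z)$. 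Part (b) is then immediate from the three defining conditions: $z \in x$ and $x \cap k' \subsetneq x$ follow from $z \in x \setminus k'$, and $T^+_{x \setminus (x \cap k')} = T^+_{x \setminus k'} = T^\pm_{x \setminus k'} = \hat{g}(z)$ is the hypothesis; the min-max verification is symmetric.

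The hard part is the remaining two equalities in (a). My plan is to show that the first components of max-min pairs for $z$ are closed under union (so that $x_1 \cup x_2$ again supports a max-min pair) and that the first components of min-max pairs are closed under intersection (so that $k_1' \cap k_2'$ again supports a min-max pair); the first equality then delivers both remaining equalities directly. The route I have in mind is to identify max-min first components with the elements of $X(\hat{g}(z))$ that contain $z$, and dually for min-max: the max-min property supplies the first inequality of Proposition \ref{prop:4.1} at $\eta = \hat{g}(z)$, and I expect to extract the dual inequality by applying \eqref{eq:2} to well-chosen $\tilde z$ in a superset and exploiting the min-max pair $(k_1', k_1)$ together with the isotonicity of $\hat{g}$. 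Once the identification is in place, Lemma \ref{lem:4.1} (extended to $\eta = \eta'$ via Proposition \ref{prop:4.1} and Corollary \ref{cor:4.1}) shows that $X(\hat{g}(z))$ is closed under union and intersection. Verifying the superset inequality for an arbitrary max-min first component is the step I expect to be most delicate, since the max-min formula itself supplies only the subset inequality directly.
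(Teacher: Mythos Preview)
Your treatment of (c), the first equality in (a), and (b) is essentially the paper's: the same sandwich via \eqref{eq:1} and \eqref{eq:2}, the same collapse from singleton type, and the same direct verification from the set identities $x\setminus(x\cap k')=x\setminus k'=(x\cup k')\setminus k'$.

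The substantive divergence is in the remaining two equalities of (a). The paper does \emph{not} pass through $X(\hat g(z))$ or Lemma~\ref{lem:4.1}; it uses Lemma~\ref{lemma:functional_bounds} (the Cauchy mean value inequality) in one line. For $T^\pm_{(x_1\cup x_2)\setminus k_1'}$, decompose the set as the disjoint union of $x_1\setminus k_1'$ and $x_2\setminus(x_1\cup k_1')$. The first piece carries functional value exactly $\hat g(z)$ by the already-proved first equality; the second satisfies $T^+_{x_2\setminus(x_1\cup k_1')}\ge\hat g(z)$ because $x_1\cup k_1'\in\mathcal X$ and the inner minimum in \eqref{eq:1} at $x=x_2$ is $\hat g(z)$. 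Lemma~\ref{lemma:functional_bounds} then gives $T^-_{(x_1\cup x_2)\setminus k_1'}\ge\hat g(z)$, while the upper bound $\le\hat g(z)$ comes from the min-max side exactly as in the first equality. The third equality is the dual computation. This is two lines and uses only tools already on the table.

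Your proposed route has a genuine gap. The identification ``max-min first components $=\{x\in X(\hat g(z)):z\in x\}$'' is not what the definition of a max-min pair gives you: the condition $\hat g(z)=T^+_{x_1\setminus x_1'}$ for \emph{one} particular $x_1'$ does not force $v_{x_1\setminus x'}(\hat g(z))\le 0$ for \emph{all} $x'\subsetneq x_1$, so even the subset inequality of Proposition~\ref{prop:4.1} is not supplied, let alone the superset inequality you flag as delicate. (Concretely, with $n=3$, $y=(0,2,0)$, mean functional, and $z=z_2$, the pair $(\{z_1,z_2,z_3\},\{z_3\})$ is a max-min pair for $z_2$ but $\{z_1,z_2,z_3\}\notin X(1)$.) Even if you strengthened the notion of max-min pair to require that $x_1$ attain the outer maximum in \eqref{eq:1}, you would still need the superset half of Proposition~\ref{prop:4.1}, and after all that you would only know that $x_1\cup x_2$ supports \emph{some} max-min pair, at which point you re-enter the first-equality computation anyway. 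The Lemma~\ref{lemma:functional_bounds} argument gets there directly.
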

\begin{proof}
	We repeatedly use the inequalities $\hat{g}(z) = T_{x_1 \setminus x_1'}^+ = \min_{x' \in \mathcal{X}} T_{x_1 \setminus x'}^+ \le T_{x_1 \setminus k'}^+$ and $\hat{g}(z) = T_{k_1 \setminus k_1'}^- = \max_{k \in \mathcal{X}} T_{k \setminus k_1'}^- \ge T_{x \setminus k_1'}^-$ for all $x, k' \in \mathcal{X}$, where the second equality holds because $T_P^+ = \infty$ and $T_P^- = -\infty$ for null measures $P$.
	Furthermore, by assumption, $T(P_{x \setminus k'})$ is a singleton if $P((x \setminus k') \times \R) > 0$, and therefore $T(P_{x \setminus k'})$ is a singleton if $z \in x$ and $z \notin k'$.
	\begin{enumerate}[label=(\alph*)]
		\item Clearly, $z \in x_1$, $z \in x_2$, $z \notin k_1'$, and $z \notin k_2'$.
		Hence, $\hat{g}(z) \le T_{x_1 \setminus k_1'}^\pm \le \hat{g}(z)$ implies the first statement.
		Furthermore, $\hat{g}(z) \le T_{x_2 \setminus (x_1 \cup k_1')}^+ = T_{(x_2 \setminus x_1) \setminus k_1'}^+$, and hence $\hat{g}(z) = \min\{ T_{x_1 \setminus k_1'}^-, T_{(x_2 \setminus x_1) \setminus k_1'}^+ \} \le T_{(x_1 \cup x_2) \setminus k_1'}^\pm \le \hat{g}(z)$ confirms the second statement using Lemma \ref{lemma:functional_bounds}.
		Similarly, for the third statement, $\hat{g}(z) \le T_{x_1 \setminus (k_1' \cap k_2')}^\pm \le \max\{ T_{x_1 \setminus k_1'}^+, T_{(x_1 \cap k_1') \setminus k_2'}^- \} = \hat{g}(z)$.
		\item The statement follows immediately from $T_{x \setminus k'}^- = T_{x \setminus k'}^+$, $(x \cup k') \setminus k' = x \setminus k' = x \setminus (x \cap k')$, and the definition of max-min and min-max pairs.
		\item Let $(x_{\tilde{z}}, x_{\tilde{z}}')$ be a max-min pair for $\tilde{z}$ and $(k_{\tilde{z}}', k_{\tilde{z}})$ be a min-max pair for $\tilde{z}$.
		Then the statement follows from $\hat{g}(z) \le T_{x_1 \setminus k_{\tilde{z}}'}^\pm \le \hat{g}(\tilde{z}) \le T_{x_{\tilde{z}} \setminus k_1'}^\pm \le \hat{g}(z)$.
	\end{enumerate}
\end{proof}
\begin{proposition}
	Let $T$ be a functional of singleton type.
	Then there exists a partition $\mathcal{Q}$ of $\mathcal{Z}$ such that $\hat{g}$ is constant on every element of the partition almost everywhere and $\hat{g}(z) \in T(P_Q)$ for all $Q \in \mathcal{Q}$, $z \in Q$ such that $P(\{z\} \times \R) > 0$.
\end{proposition}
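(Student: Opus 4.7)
The plan is to attach to every positive-mass covariate $z$ a canonical ``slab'' $Q_z \subseteq \mathcal{Z}$, cut out by the largest max-min pair and the smallest min-max pair at $z$, and then to show that these slabs either coincide or are disjoint on positive-mass elements. The representations \eqref{eq:1}--\eqref{eq:2} guarantee that at least one max-min pair and at least one min-max pair exist for each such $z$, so the construction is well-posed.

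Concretely, for each $z$ with $P(\{z\}\times\R)>0$, I would set
\[
U_z = \bigcup\{x : (x,x') \text{ is a max-min pair for } z\}, \quad L_z = \bigcap\{k' : (k',k) \text{ is a min-max pair for } z\},
\]
which lie in $\mathcal{X}$ because $\mathcal{X}$ is closed under finite unions and intersections. Iterating Lemma \ref{lem:4.3}(a) and then invoking part (b) repeatedly shows that $(U_z, U_z \cap L_z)$ is itself a max-min pair for $z$ and $(L_z, L_z \cup U_z)$ is a min-max pair for $z$. In particular, $\hat{g}(z) = T_{U_z\setminus L_z}^{\pm}$, so setting $Q_z := U_z \setminus L_z$ and using the singleton-type assumption together with $P(Q_z\times\R) \geq P(\{z\}\times\R)>0$, one obtains $T(P_{Q_z}) = \{\hat{g}(z)\}$.

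Next, the key claim is that if $z, z'$ both have positive mass and $z' \in Q_z$, then $Q_z = Q_{z'}$. Applying Lemma \ref{lem:4.3}(c) to the maximal pairs just constructed yields that $(U_z, U_z \cap L_z)$ is a max-min pair for $z'$ and $(L_z, L_z \cup U_z)$ is a min-max pair for $z'$. Maximality of $U_{z'}$ and minimality of $L_{z'}$ then force $U_z \subseteq U_{z'}$ and $L_{z'} \subseteq L_z$, so $Q_z \subseteq Q_{z'}$. Because $z \in U_z \subseteq U_{z'}$ and $z \notin L_z \supseteq L_{z'}$, also $z \in Q_{z'}$, and the symmetric argument yields $Q_{z'} \subseteq Q_z$, hence equality. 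In particular $\hat{g}(z') = T_{Q_{z'}}^{\pm} = T_{Q_z}^{\pm} = \hat{g}(z)$, giving constancy of $\hat{g}$ on the positive-mass elements of each $Q_z$.

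The distinct sets $\{Q_z : P(\{z\}\times\R)>0\}$ therefore partition the positive-mass support of $P$ on $\mathcal{Z}$; I would extend them to a partition $\mathcal{Q}$ of all of $\mathcal{Z}$ by assigning each remaining (zero-mass) covariate arbitrarily to one of the $Q_z$ containing it, or placing it in a new singleton class if none does. Since zero-mass covariates do not affect $P_Q$, the identity $T(P_Q) = \{T_{Q_z}^{\pm}\} \ni \hat{g}(z)$ persists for every positive-mass $z\in Q$, and constancy of $\hat{g}$ holds $P$-almost-everywhere on each class. The delicate step is establishing $Q_z = Q_{z'}$, which is what turns the family of slabs into an honest partition; everything hinges on combining parts (a)--(c) of Lemma \ref{lem:4.3} with the maximality/minimality of $U_z$ and $L_z$, while the remaining verifications are essentially bookkeeping.
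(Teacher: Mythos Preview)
Your proposal is correct and follows essentially the same route as the paper: the paper likewise defines $\bar{x}_z$ as the union of first components of all max-min pairs and $\bar{k}_z'$ as the intersection of first components of all min-max pairs, sets $Q_z = \bar{x}_z \setminus \bar{k}_z'$, and uses Lemma~\ref{lem:4.3}(a)--(c) in the same sequence to obtain $Q_z = Q_{\tilde z}$ whenever $\tilde z \in Q_z$. Your treatment of zero-mass covariates is in fact slightly more careful than the paper's, which silently runs the construction over all of $\mathcal{Z}$.
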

\begin{proof}
	Let $\bar{x}_z$ denote the union of the first components of all max-min pairs for $z \in \mathcal{Z}$, and let $\bar{k}_z'$ denote the intersection of the first components of all min-max pairs for $z \in \mathcal{Z}$.
	By Lemma \ref{lem:4.3} (a), we have $\hat{g}(z) = T_{{\bar{x}_z} \setminus \bar{k}_z'}^\pm$.
	We now show that the collection $\mathcal{Q}$ of sets $Q_z = \bar{x}_z \setminus \bar{k}_z'$ is a partition of $\mathcal{Z}$.
	First, we have $\bigcup_{z \in \mathcal{Z}} Q_z = \mathcal{Z}$, since $z \in \bar{x}_z$ and $z \notin \bar{k}_z'$ for all $z \in \mathcal{Z}$.
	Second, by Lemma \ref{lem:4.3} (b), we have that $(\bar{x}_z, \bar{x}_z \cap \bar{k}_z')$ is a max-min pair for $z$ and $(\bar{k}_z', \bar{k}_z' \cup \bar{x}_z)$ is a min-max pair for $z$.
	Then, by Lemma \ref{lem:4.3} (c), we have $\bar{x}_z \subset \bar{x}_{\tilde{z}}$ and $\bar{k}_z' \supset \bar{k}_{\tilde{z}}'$ for all $\tilde{z} \in Q_z$, i.e., $Q_z \subset Q_{\tilde{z}}$ and in particular $z \in Q_{\tilde{z}}$.
	Swapping the roles of $z$ and $\tilde{z}$ gives $Q_{\tilde{z}} \subset Q_z$.
	Therefore, $Q_z = Q_{\tilde{z}}$ for all $z \in \mathcal{Z}, \tilde{z} \in Q_z$.
\end{proof}

When $T$ is a functional of interval type, we therefore obtain a partition for every fixed convex combination of its lower bound $T^-$ and its upper bound $T^+$.

\section{Unimodal Regression} \label{sec:unimodal}
It is astonishing that in isotonic regression, solutions are simultaneously optimal for all loss functions in the class $\mathcal{S}$ which exhausts all consistent loss functions for the functional $T$ in many relevant examples.
One might wonder whether this is still fulfilled for slightly adapted shape constraints.
Unimodality is a shape constraint closely related to isotonicity.
One estimation procedure is to take a mode between two consecutive observations and then split the data set in two.
On the subset preceding the mode an isotonic regression is performed and on the data following the mode an antitonic regression is performed.
This procedure is then repeated for any possible choice of mode, as illustrated in Figure \ref{fig:partition}.
Finally the optimal function is chosen by selecting the one with minimal loss.
The reason for the mode to be chosen outside of $\{z_1, \dots, z_n\}$ is to avoid ambiguity.
If the mode is fixed on observation $z_i$, $1 < i < n$, then the isotonic regression on $\{ z_1, \dots, z_i \}$ and the antitonic regression on $\{ z_i, \dots, z_n \}$ might yield two different values for $\hat{g}(z_i)$.

\begin{figure}
	\centering
	\includegraphics{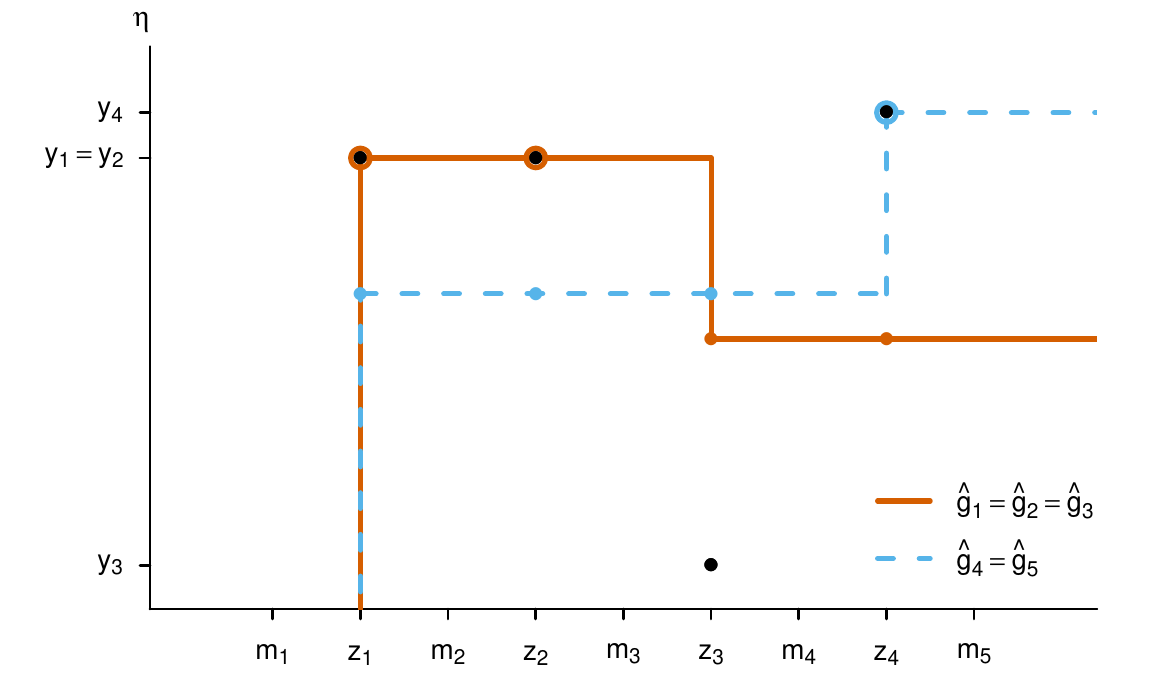}
	\caption{\textbf{Possible Modes.} For a sample of 4 data points, the five possible choices $m_1, \dots, m_5$ for the mode, and the corresponding subdivision into isotonic and antitonic part for the functions $\hat{g}_1, \dots, \hat{g}_5$ are marked\label{fig:partition}}
\end{figure}

Fixing mode $m_i$ and applying our method to $\{z_1, \dots, z_{i-1}\}$ and $\{z_i,\dots, z_n\}$  with isotonicity and antitonicity, respectively, as shape constraints yields a function $\hat{g}_i \colon \{z_1, \dots, z_n\} \to \R$ that is optimal for any consistent loss function for functional $T$.
The question arises whether there is one mode $m_i$ such that the corresponding $\hat{g}_i$ dominates all other functions $\hat{g}_j$, $j\neq i$.
It turns out that this is generally not the case.

To give an example, we consider four observations $(z_1, y_1), \dots, (z_4, y_4)$ with $z_1 < \dots < z_4$ and $(y_1, \dots y_4) = (9, 9, 0, 10)$, and let $P$ denote the corresponding empirical distribution.
We choose the expectation functional as the regression target, and consider modes $m_1, \dotsc, m_5$ with $m_1 < z_1 < m_2 < z_2 < \dots  < z_4 < m_5$.
For modes $m_1$ and $m_3$, the unimodal approach yields the partitions $\mathcal{Q}_{m_1}=\mathcal{Q}_{m_3}=\{\{z_1,z_2\},\{z_3, z_4\}\}$ using the PAV algorithm, and for mode $m_2$, we obtain the partition $\mathcal{Q}_{m_2}=\{\{z_1\}, \{z_2\}, \{z_3,z_4\}\}$.
But for this specific data example all three modes yield the same function i.e., $\hat{g}_1 = \hat{g}_2= \hat{g}_3$.
For modes $m_4$ and $m_5$, we obtain the partitions $\mathcal{Q}_{m_4}=\mathcal{Q}_{m_5}=\{\{z_1,z_2,z_3\},\{z_4\}\}$ and therefore $\hat{g}_4=\hat{g}_5$.
The functions $\hat{g_1}, \dots, \hat{g}_5$ are illustrated in Figure \ref{fig:counterexample}.

\begin{figure}
	\centering
	\includegraphics{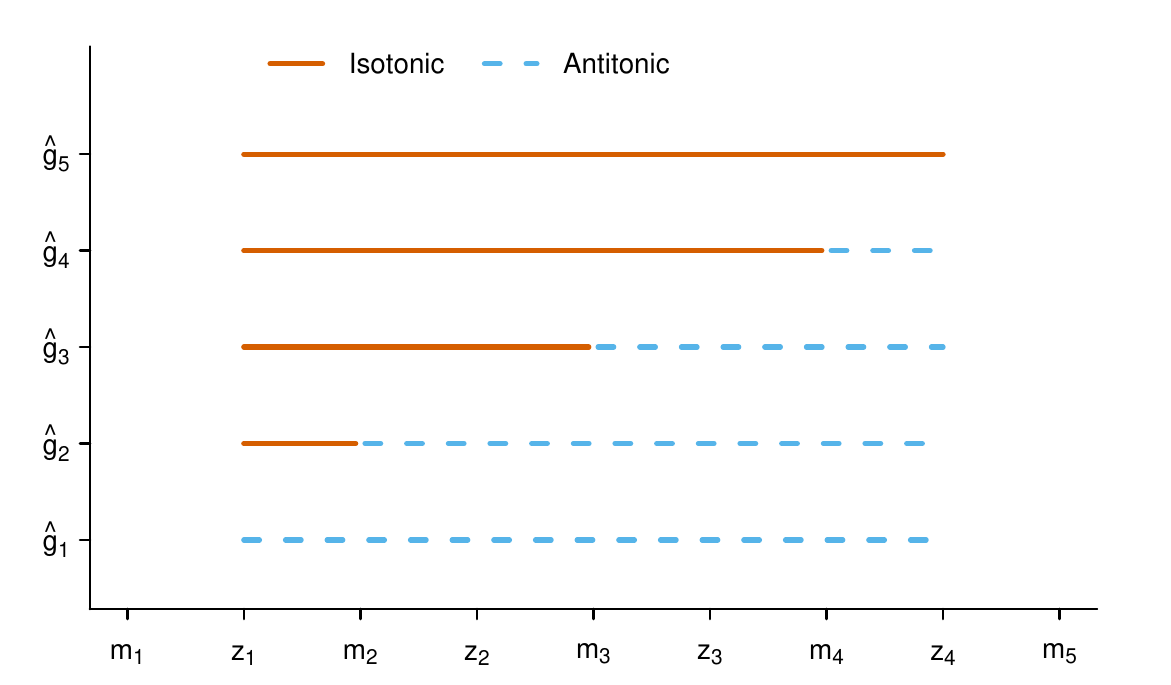}
	\caption{\textbf{Counterexample.} For the specific sample of 4 data points (black), the five possible choices $m_1, \dots, m_5$ for the mode, and the resulting functions $\hat{g}_1, \dots, \hat{g}_5$ are illustrated\label{fig:counterexample}}
\end{figure}

Solution $\hat{g}_i$, $1\leq i \leq 5$, dominates all other $\hat{g}_j$, $j\neq i$, if
\begin{align*}
\E_{P} S_\eta (\hat{g}_i(Z),Y) \leq \E_{P} S_\eta (\hat{g}_j(Z),Y) 
\quad \text{for all } \eta \in \R, j \neq i.
\end{align*}
It can be seen that this condition is not fulfilled by plotting the expected elementary scores for $\hat{g}_1, \dots, \hat{g}_5$; see Figure \ref{fig:murphy}.
This visual method of comparing forecasts is called a Murphy diagram and was introduced by \citet{Ehm2016}.

Hence, in unimodal regression there is not necessarily a solution $\hat{g}_i$ that simultaneously minimizes all consistent loss functions for a functional $T$.
This agrees with our findings in Section \ref{sec:partial} because the set $\mathcal{X}$ is not closed under union and  intersection.
Indeed, it holds that $\{z_1\}, \{z_4\} \in \mathcal{X}$ but $\{z_1,z_4\} \notin \mathcal{X}$.
Therefore, the existence of a decreasing function $\xi \colon \R \to \mathcal{X}$ is not guaranteed.

\begin{figure}
	\centering
	\includegraphics{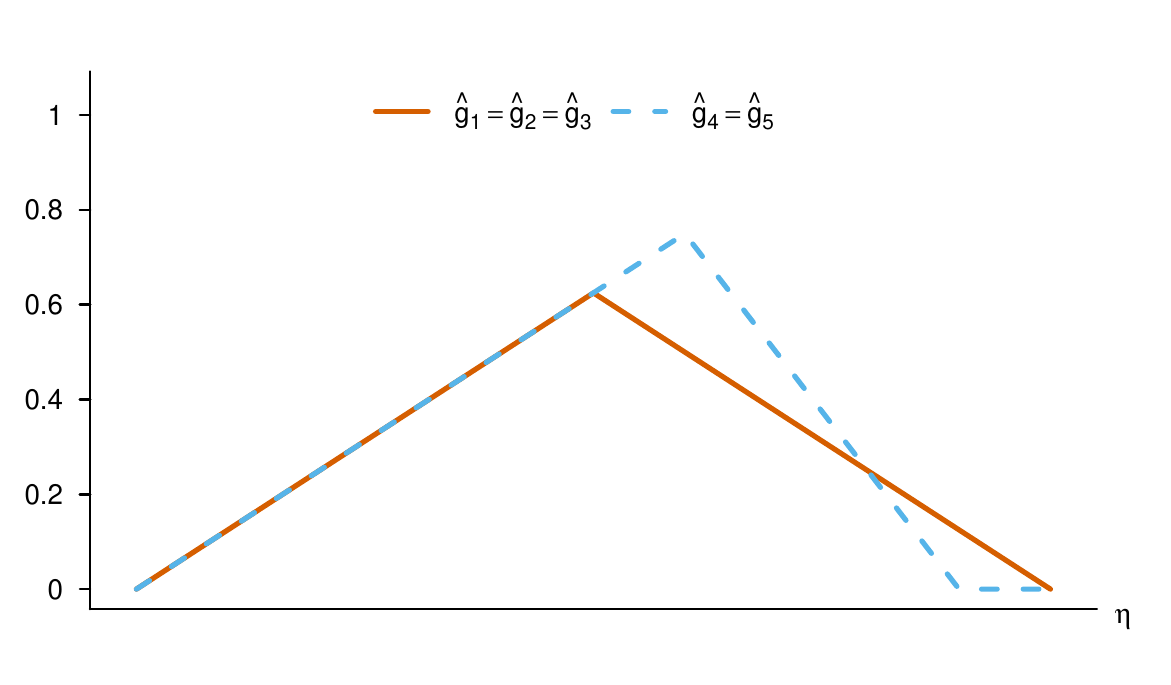}
	\caption{\textbf{Murphy Diagram.} The Murphy diagram comparing the expected elementary scores of $\hat{g}_1, \dots, \hat{g}_5$ for $\eta \in \R$ given realizations $(y_1, \dots, y_4) = (9, 9, 0, 10)$\label{fig:murphy}}
\end{figure}

\pagebreak
{\small \textbf{Acknowledgements}
	We would like to thank Tilmann Gneiting, Alexandre M\"oschung and Lutz D\"umbgen for inspiring discussions and valuable comments.
	Anja M\"uhlemann and Johanna F.~Ziegel gratefully acknowledge financial support from the Swiss National Science Foundation.
}

\bibliographystyle{myims2}
\bibliography{biblio_pav}

\begin{thebibliography}{29}
\expandafter\ifx\csname natexlab\endcsname\relax\def\natexlab#1{#1}\fi
\expandafter\ifx\csname url\endcsname\relax
  \def\url#1{\texttt{#1}}\fi
\expandafter\ifx\csname urlprefix\endcsname\relax\def\urlprefix{URL }\fi
\providecommand{\eprint}[2][]{\url{#2}}

\bibitem[{Ayer et~al.(1955)Ayer, Brunk, Ewing, Reid and Silverman}]{Ayer1955}
{Ayer, M.}, {Brunk, H.~D.}, {Ewing, G.~M.}, {Reid, W.~T.} and {Silverman, E.}
  (1955).
\newblock An empirical distribution function for sampling with incomplete
  information.
\newblock \textit{Ann. Math. Statist.}, {26}, 641--647.

\bibitem[{Barlow et~al.(1972)Barlow, Bartholomew, Bremner and
  Brunk}]{Barlow1972}
{Barlow, R.~E.}, {Bartholomew, D.~J.}, {Bremner, J.~M.} and {Brunk, H.~D.}
  (1972).
\newblock \textit{Statistical Inference Under Order Restrictions}.
\newblock Wiley, London.

\bibitem[{Bartholomew(1959{\natexlab{a}})}]{Bartholomew1959a}
{Bartholomew, D.~J.} (1959{\natexlab{a}}).
\newblock A test of homogeneity for ordered alternatives.
\newblock \textit{Biometrika}, {46}, 36--48.

\bibitem[{Bartholomew(1959{\natexlab{b}})}]{Bartholomew1959b}
{Bartholomew, D.~J.} (1959{\natexlab{b}}).
\newblock A test of homogeneity for ordered alternatives. {II}.
\newblock \textit{Biometrika}, {46}, 328--335.

\bibitem[{Bellec(2018)}]{Bellec2018}
{Bellec, P.~C.} (2018).
\newblock Sharp oracle inequalities for least squares estimators in shape
  restricted regression.
\newblock \textit{Ann. Statist.}, {46}, 745--780.

\bibitem[{Br\"ummer and Du~Preez(2013)}]{Brummer2013}
{Br\"ummer, N.} and {Du~Preez, J.} (2013).
\newblock The {PAV} algorithm optimizes binary proper scoring rules.
\newblock {a}rXiv:1304.2331.

\bibitem[{Brunk(1955)}]{Brunk1955}
{Brunk, H.~D.} (1955).
\newblock Maximum likelihood estimates of monotone parameters.
\newblock \textit{Ann. Math. Statist.}, {26}, 607--616.

\bibitem[{Dawid(2016)}]{Dawid2016}
{Dawid, A.~P.} (2016).
\newblock Contribution to the discussion of ``{O}f quantiles and expectiles:
  {C}onsistent scoring functions, {C}hoquet representations and forecast
  rankings'' by {E}hm, {W}., {G}neiting, {T}., {J}ordan, {A}.~and {K}r\"uger,
  {F}.
\newblock \textit{J. R. Stat. Soc. Ser. B. Stat. Methodol.}, {78}, 505--562.

\bibitem[{Ehm et~al.(2016)Ehm, Gneiting, Jordan and Kr\"{u}ger}]{Ehm2016}
{Ehm, W.}, {Gneiting, T.}, {Jordan, A.} and {Kr\"{u}ger, F.} (2016).
\newblock Of quantiles and expectiles: Consistent scoring functions, {C}hoquet
  representations and forecast rankings.
\newblock \textit{J. R. Stat. Soc. Ser. B. Stat. Methodol.}, {78}, 505--562.

\bibitem[{Gneiting(2011)}]{Gneiting2011}
{Gneiting, T.} (2011).
\newblock Making and evaluating point forecasts.
\newblock \textit{J. Amer. Statist. Assoc.}, {106}, 746--762.

\bibitem[{Groeneboom and Jongbloed(2014)}]{Groeneboom2014}
{Groeneboom, P.} and {Jongbloed, G.} (2014).
\newblock \textit{Nonparametric estimation under shape constraints}.
\newblock Cambridge University Press, New York.

\bibitem[{Guntuboyina and Sen(2018)}]{Guntuboyina2018}
{Guntuboyina, A.} and {Sen, B.} (2018).
\newblock Nonparametric shape-restricted regression.
\newblock \textit{Statist. Sci.}, {33}, 568--594.

\bibitem[{Gurney and Griffin(2011)}]{Gurney2011}
{Gurney, A. J.~T.} and {Griffin, T.~G.} (2011).
\newblock Pathfinding through congruences.
\newblock In \textit{Relational and Algebraic Methods in Computer Science},
  vol. 6663. Springer, Heidelberg, 180--195.

\bibitem[{Han et~al.(2019)Han, Wang, Chatterjee and Samworth}]{Han2017}
{Han, Q.}, {Wang, T.}, {Chatterjee, S.} and {Samworth, R.~J.} (2019).
\newblock Isotonic regression in general dimensions.
\newblock \textit{Ann. Statist.}, {47}, 2440--2471.

\bibitem[{Huber(1964)}]{Huber1964}
{Huber, P.~J.} (1964).
\newblock Robust estimation of a location parameter.
\newblock \textit{Ann. Math. Statist.}, {35}, 73--101.

\bibitem[{Kyng et~al.(2015)Kyng, Rao and Sachdeva}]{Kyng2015}
{Kyng, R.}, {Rao, A.} and {Sachdeva, S.} (2015).
\newblock Fast, provable algorithms for isotonic regression in all
  ${L}_{p}$-norms.
\newblock In \textit{Advances in Neural Information Processing Systems 28}.
  Curran Associates, Inc., Red Hook, 2719--2727.

\bibitem[{Luss and Rosset(2017)}]{Luss2017}
{Luss, R.} and {Rosset, S.} (2017).
\newblock Bounded isotonic regression.
\newblock \textit{Electron. J. Stat.}, {11}, 4488--4514.

\bibitem[{Miles(1959)}]{Miles1959}
{Miles, R.~E.} (1959).
\newblock The complete amalgamation into blocks, by weighted means, of a finite
  set of real numbers.
\newblock \textit{Biometrika}, {46}, 317--327.

\bibitem[{M\"{o}sching and D\"{u}mbgen(2020)}]{MoschingDumbgen2019}
{M\"{o}sching, A.} and {D\"{u}mbgen, L.} (2020).
\newblock Monotone least squares and isotonic quantiles.
\newblock \textit{Electron. J. Stat.}, {14}, 24--49.

\bibitem[{Newey and Powell(1987)}]{Newey1987}
{Newey, W.~K.} and {Powell, J.~L.} (1987).
\newblock Asymmetric least squares estimation and testing.
\newblock \textit{Econometrica}, {55}, 819--847.

\bibitem[{Patton(2011)}]{Patton2011}
{Patton, A.~J.} (2011).
\newblock Volatility forecast comparison using imperfect volatility proxies.
\newblock \textit{J. Econometrics}, {160}, 246--256.

\bibitem[{Patton(2019)}]{Patton2018}
{Patton, A.~J.} (2019).
\newblock Comparing possibly misspecified forecasts.
\newblock \textit{J. Bus. Econom. Statist.}
\newblock {p}ublished online.

\bibitem[{Polonik(1998)}]{Polonik1998}
{Polonik, W.} (1998).
\newblock The silhouette, concentration functions and {ML}-density estimation
  under order restrictions.
\newblock \textit{Ann. Statist.}, {26}, 1857--1877.

\bibitem[{Robertson and Wright(1973)}]{RobertsonWright1973}
{Robertson, T.} and {Wright, F.~T.} (1973).
\newblock Multiple isotonic median regression.
\newblock \textit{Ann. Statist.}, {1}, 422--432.

\bibitem[{Robertson and Wright(1980)}]{RobertsonWright1980}
{Robertson, T.} and {Wright, F.~T.} (1980).
\newblock Algorithms in order restricted statistical inference and the {C}auchy
  mean value property.
\newblock \textit{Ann. Statist.}, {8}, 645--651.

\bibitem[{Savage(1971)}]{Savage1971}
{Savage, L.~J.} (1971).
\newblock Elicitation of personal probabilities and expectations.
\newblock \textit{J. Amer. Statist. Assoc.}, {66}, 783--801.

\bibitem[{Stout(2015)}]{Stout2015}
{Stout, Q.~F.} (2015).
\newblock Isotonic regression for multiple independent variables.
\newblock \textit{Algorithmica}, {71}, 450--470.

\bibitem[{van Eeden(1958)}]{vanEeden1958}
{van Eeden, C.} (1958).
\newblock \textit{Testing and Estimating Ordered Parameters of Probability
  Distributions}.
\newblock Mathematical Centre, Amsterdam.

\bibitem[{Ziegel(2016)}]{Ziegel2016}
{Ziegel, J.~F.} (2016).
\newblock Contribution to the discussion of ``{O}f quantiles and expectiles:
  {C}onsistent scoring functions, {C}hoquet representations and forecast
  rankings'' by {E}hm, {W}., {G}neiting, {T}., {J}ordan, {A}.~and {K}r\"uger,
  {F}.
\newblock \textit{J. R. Stat. Soc. Ser. B. Stat. Methodol.}, {78}, 505--562.

\end{thebibliography}

\end{document}